
\documentclass[11pt]{article}
\usepackage[english]{babel}
\usepackage[a4paper,scale=0.75,hcentering]{geometry}
\raggedbottom

\addtolength{\footskip}{\baselineskip}
\usepackage[hyperfootnotes=false]{hyperref} 
\usepackage{setspace}
\hypersetup{
	colorlinks = true,
	linkcolor = {blue},
	urlcolor = {red},
	citecolor = {blue}
}
\usepackage{amsmath, amsthm, amssymb, mathrsfs, mathtools, enumitem}
\usepackage[x11names]{xcolor}

\hfuzz=10pt

\allowdisplaybreaks

\newcounter{results}[section] 

\theoremstyle{plain}
\newtheorem{theorem}[results]{Theorem}
\newtheorem{lemma}[results]{Lemma}

\newtheorem{corollary}[results]{Corollary}

\newtheorem*{theorem*}{Theorem}
\newtheorem*{lemma*}{Lemma}
\newtheorem*{proposition*}{Proposition}
\newtheorem*{corollary*}{Corollary}
\newtheorem*{exercise*}{Exercise}
\newtheorem*{fact*}{Fact}
\newtheorem*{problem*}{Problem}
\newtheorem*{conjecture*}{Conjecture}

\theoremstyle{remark}
\newtheorem{remark}[results]{Remark}

\newtheorem*{remark*}{Remark}
\newtheorem*{question*}{Question}

\theoremstyle{definition}

\newtheorem*{definition*}{Definition}
\newtheorem*{example*}{Example}

\numberwithin{equation}{section}


\newcommand{\N}{\ensuremath{\mathbb N}} 
\newcommand{\R}{\ensuremath{\mathbb R}} 
\providecommand{\C}{}
\renewcommand{\C}{\ensuremath{\mathcal C}} 
\renewcommand{\S}{\ensuremath{\mathbb S}} 

\newcommand{\I}{\ensuremath{\mathcal I}}

\renewcommand{\O}{\ensuremath{\mathcal O}}
\newcommand{\M}{\ensuremath{\mathcal M}}
%
\makeatletter 
\DeclarePairedDelimiter{\@tmpabs}{\lvert}{\rvert}
\newcommand{\@absstar}[1]{{\@tmpabs*{#1}}}
\newcommand{\@absnostar}[2][]{{\@tmpabs[#1]{#2}}}
\newcommand{\abs}{\@ifstar\@absstar\@absnostar}
\makeatother

\makeatletter 
\DeclarePairedDelimiter{\@tmpnorm}{\lVert}{\rVert}
\newcommand{\@normstar}[1]{{\@tmpnorm*{#1}}}
\newcommand{\@normnostar}[2][]{{\@tmpnorm[#1]{#2}}}
\newcommand{\norm}{\@ifstar\@normstar\@normnostar}
\makeatother








\let\div\undefined
\DeclareMathOperator{\div}{div}

\ifdefined\comma
	\renewcommand{\comma}{\ensuremath{\, \text{, }}}
\else
	\newcommand{\comma}{\ensuremath{\, \text{, }}}
\fi



\definecolor{removed}{RGB}{255, 0, 0}
\definecolor{added}{RGB}{0,192,0}


\usepackage[hyperfootnotes=false]{hyperref} 
\usepackage{setspace}
\hypersetup{
	colorlinks = true,
	linkcolor = {blue},
	urlcolor = {red},
	citecolor = {blue}
}

\usepackage[nameinlink,capitalise,sort]{cleveref} 
\crefname{equation}{}{} 
\crefname{enumi}{}{} 

\setstretch{1} 
\allowdisplaybreaks

\numberwithin{equation}{section}
\setlength{\headheight}{22.43335pt}


\begin{document}

\title{\bf Degenerate stability of critical points of the Caffarelli-Kohn-Nirenberg inequality along the Felli-Schneider curve\thanks{This work is supported by  National Key R$\&$D Program of China (Grant 2023YFA1010001) and NSFC(12571123). E-mail addresses: zhou-yx22@mails.tsinghua.edu.cn(Zhou),    zou-wm@mail.tsinghua.edu.cn (Zou)}}

\author{{\bf Yuxuan Zhou, Wenming Zou}\\ {\footnotesize \it  Department of Mathematical Sciences, Tsinghua University, Beijing 100084, China.} }

\date{}



\maketitle

\begin{abstract}
{\small In this paper, we investigate the validity of a quantitative version of stability for the critical Hardy-H\'enon equation
\begin{equation*}
    H(u):=\div(|x|^{-2a}\nabla u)+|x|^{-pb}|u|^{p-2}u=0,\quad u\in D_a^{1,2}(\R^n),
\end{equation*}
\begin{equation*}
    n\geq 2,\quad a<b<a+1,\quad a<\frac{n-2}{2},\quad p=\frac{2n}{n-2+2(b-a)},
\end{equation*}
which is well known as the Euler-Lagrange equation of the classical Caffarelli-Kohn-Nirenberg inequality. Establishing quantitative stability for this equation amounts to finding a nonnegative function $F$ such that the estimate
\begin{equation*}
    \inf_{\substack{U_i\in\mathcal{M}\\
    1\leq i\leq\nu}}\norm*{u-\sum_{i=1}^\nu U_i}_{D_a^{1,2}(\R^n)}\leq C(a,b,n)F(\norm*{H(u)}_{D_a^{-1,2}(\R^n)})
\end{equation*}
holds for any nonnegative function $u$ satisfying
\begin{equation*}
    \left(\nu-\frac{1}{2}\right)S(a,b,n)^{\frac{p}{p-2}}\leq\int_{\R^n}|x|^{-2a}|\nabla u|^2\mathrm{d}x\leq \left(\nu+\frac{1}{2}\right)S(a,b,n)^{\frac{p}{p-2}}.
\end{equation*}
Here $\nu\in\N_+$ and $\mathcal{M}$ denotes the set of positive solutions of the equation. When $(a,b)$ falls above the Felli-Schneider curve, Wei and Wu \cite{Wei} found an optimal $F$. Their proof relies heavily on the fact that $\mathcal{M}$ is non-degenerate. When $(a,b)$ falls on the Felli-Schneider curve, due to the absence of the non-degeneracy condition, it becomes complicated and technical to find a suitable $F$. In this paper, we focus on this case. When $\nu=1$, we obtain an optimal $F$. When $\nu\geq2$ and $u$ is not too degenerate, we also derive an optimal $F$. To our knowledge, the results in this paper provide the first instance of degenerate stability in the critical point setting. We believe that our methods will be useful in other works on degenerate stability.
    \vskip0.1in
\noindent{\bf Key words:}  Caffarelli-Kohn-Nirenberg inequality, quantitative stability, critical point, Felli-Schneider curve.

\vskip0.1in
\noindent{\bf 2020 Mathematics Subject Classification:} Primary 46E35; 35J61; 26D10

}
\end{abstract}

\section{Introduction}
In this paper, we are concerned with the quantitative stability for the critical Hardy-H\'enon equation
\begin{equation}\label{eq}
    H(u):=\div(|x|^{-2a}\nabla u)+|x|^{-pb}|u|^{p-2}u=0,
\end{equation}
where $u\in D_a^{1,2}(\R^n)$ and $n,a,b,p$ satisfy the relations
\begin{equation}\label{cond}
    n\geq 2,\quad a<b<a+1,\quad a<\frac{n-2}{2},\quad p=\frac{2n}{n-2+2(b-a)}.
\end{equation}
The space $D_a^{1,2}(\R^n)$ is the completion of $C_c^{\infty}(\R^n)$ with respect to the norm
\begin{equation*}
    \norm*{u}_{D_a^{1,2}(\R^n)}:=\left(\int_{\R^n}|x|^{-2a}|\nabla u|^2\mathrm{d}x\right)^{\frac{1}{2}}.
\end{equation*}
It is well known that the equation \eqref{eq} is the Euler-Lagrange equation of the classical Caffarelli-Kohn-Nirenberg inequality (see \cite{Caf})
\begin{equation}\label{ckn}
    S(a,b,n)\left(\int_{\R^n}|x|^{-bp}|u|^p\mathrm{d}x\right)^{\frac{2}{p}}\leq\int_{\R^n}|x|^{-2a}|\nabla u|^2\mathrm{d}x.
\end{equation}
Here $S(a,b,n)$ denotes the sharp constant.

The classification of positive solutions to the equation \eqref{eq} has been extensively studied. Here, we provide a brief summary; for further background and partial results, we refer the readers to \cite{Cat,Dol,Dol1,Dol2,Lin} and their references. When $a\geq 0$, Chou and Chu \cite{Cho} found that the set of positive solutions is given by
\begin{equation}\label{sol}
    \mathcal{M}_{a,b,n}:=\left\{U_\lambda(x)\;\Big|\;U_\lambda(x):=\frac{\lambda^{\sqrt{\Lambda}}(2p\Lambda)^{\frac{1}{p-2}}}{(1+|\lambda x|^{\sqrt{\Lambda}(p-2)})^{\frac{2}{p-2}}},\lambda>0\right\},
\end{equation}
where $\Lambda:=\left(\frac{n-2-2a}{2}\right)^2$. Later, Catrina and Wang \cite{Cat} showed that for general $a,b,n$, the set $\mathcal{M}_{n,a,b}$ in \eqref{sol} gives all positive radially symmetric solutions. They also discovered that there exists a curve $b=f(a),a<0$ such that if $b<f(a)$, then there exists a positive non-radially symmetric solutions to the equation \eqref{eq}. Making use of fine spectral analysis, Felli and Schneider \cite{Fel} then proved that this curve can be taken to be
\begin{equation}\label{FS}
    b=b_{\text{FS}}(a):=\frac{n(n-2-2a)}{2\sqrt{(n-2-2a)^2+4n-4}}-\frac{n-2-2a}{2},\;a<0.
\end{equation}
The curve $b=b_{\text{FS}}(a)$ is usually called the Felli-Schneider curve. It was conjectured for a long time that when $a<0$ and $b\geq b_{\text{FS}}(a)$, $\mathcal{M}_{a,b,n}$ contains all positive solutions to the equation \eqref{eq}. This was finally confirmed by Dolbeault, Esteban and Loss \cite{Dol} using the method of nonlinear flows. In conclusion, when $a\geq 0$ or $a<0,b\geq b_{\text{FS}}(a)$, the set $\mathcal{M}_{n,a,b}$ in \eqref{sol} gives all the positive solutions to the equation \eqref{eq}. In the following, we will call the functions in $\mathcal{M}_{a,b,n}$ \textbf{Talenti bubbles}.

\vskip0.1in
The stability of the equation \eqref{eq}, or in other words, the stability of the Caffarelli-Kohn-Nirenberg inequality \eqref{ckn} in the critical point setting, can be understood as a natural extension of the classification results. Specifically, it refers to whether being almost a positive solution to the equation \eqref{eq} implies being close to an appropriate linear combination of Talenti bubbles. This kind of stability traces back to the celebrated global compactness principle of Struwe \cite{Str}. In \cite{Str}, Struwe established the following qualitative stability of the classical Yamabe equation in $\R^n\;(n\geq 3)$: for any $\varepsilon>0$ and $\nu\in\N_+$, there exists $\delta=\delta(n,\nu,\varepsilon)>0$ such that
\begin{equation*}
    \inf_{\substack{W_i\in\mathcal{M}_0\\
    1\leq i\leq\nu}}\norm*{u-\sum_{i=1}^\nu W_i}_{D_0^{1,2}(\R^n)}\leq \varepsilon\quad\text{if }\,\norm*{\Delta u+u^{2^*-1}}_{D_0^{-1,2}(\R^n)}\leq\delta.
\end{equation*}
Here $\mathcal{M}_0$ denotes the set of positive solutions to the Yamabe equation and $u$ is any nonnegative function satisfying
\begin{equation*}
    \left(\nu-\frac{1}{2}\right)S(0,0,n)^{\frac{n}{2}}\leq \int_{\R^n}|\nabla u|^2\mathrm{d}x\leq \left(\nu+\frac{1}{2}\right)S(0,0,n)^{\frac{n}{2}}.
\end{equation*}
Ciraolo, Figalli and Maggi \cite{Cir} were the first to quantify Struwe's work. In the case $\nu=1$, they proved the following sharp linear estimate
\begin{equation*}
    \inf_{W\in\mathcal{M}_0}\norm*{u-W}_{D_0^{1,2}(\R^n)}\leq C(n)\norm*{\Delta u+u^{2^*-1}}_{D_0^{-1,2}(\R^n)}.
\end{equation*}
This estimate was later generalized by Figalli and Glaudo \cite{Fig}. Assuming $\nu\geq 2$ and $3\leq n\leq 5$, they proved
\begin{equation*}
    \inf_{\substack{W_i\in\mathcal{M}_0\\
    1\leq i\leq\nu}}\norm*{u-\sum_{i=1}^\nu W_i}_{D_0^{1,2}(\R^n)}\leq C(n,\nu)\norm*{\Delta u+u^{2^*-1}}_{D_0^{-1,2}(\R^n)}.
\end{equation*}
Moreover, they constructed counterexamples to show that such a linear estimate does not hold for $n\geq 6$. The remaining case $\nu\geq2,n\geq 6$ was eventually solved by Deng, Sun and Wei \cite{Den}. They obtained the following sharp estimate
\begin{equation*}
    \inf_{\substack{W_i\in\mathcal{M}_0\\
    1\leq i\leq\nu}}\norm*{u-\sum_{i=1}^\nu W_i}_{D_0^{1,2}(\R^n)}\leq C(n,\nu)F_0\left(\norm*{\Delta u+u^{2^*-1}}_{D_0^{-1,2}(\R^n)}\right),
\end{equation*}
where
\begin{equation*}
    F_0(x):=\begin{cases}
        x|\ln{x}|^{\frac{1}{2}}+x,&n=6;\\
        x^{\frac{n+2}{2(n-2)}},&n\geq7.
    \end{cases}
\end{equation*}
Recently, Wei and Wu \cite{Wei} generalized the results in \cite{Den} to the critical Hardy-H\'enon equation \eqref{eq}. In the case $a\geq 0$ or $a<0,b>b_{\text{FS}}(a)$, they derived the following sharp estimate
\begin{equation}\label{www}
     \inf_{\substack{\lambda_i\in\R_+\\
    1\leq i\leq\nu}}\norm*{u-\sum_{i=1}^\nu U_{\lambda_i}}_{D_a^{1,2}(\R^n)}\leq C(n,a,b,\nu)F_1\left(\norm*{H(u)}_{D_a^{-1,2}(\R^n)}\right),
\end{equation}
where
\begin{equation*}
    F_1(x):=\begin{cases}
        x,&p>3\;\text{or }\nu=1;\\
        x|\ln{x}|^{\frac{1}{2}}+x,&p=3\;\text{and }\nu\geq 2;\\
        x^{\frac{p-1}{2}},&2<p<3\;\text{and }\nu\geq 2
    \end{cases}
\end{equation*}
and $u$ satisfies
\begin{equation*}
    \left(\nu-\frac{1}{2}\right)S(a,b,n)^{\frac{p}{p-2}}\leq\int_{\R^n}|x|^{-2a}|\nabla u|^2\mathrm{d}x\leq \left(\nu+\frac{1}{2}\right)S(a,b,n)^{\frac{p}{p-2}}.
\end{equation*}
Here $D_a^{-1,2}(\R^n)$ denotes the dual space of $D_a^{1,2}(\R^n)$. However, as was noticed in \cite{Den1,Fra1}, there is an inaccuracy in \cite{Wei}: due to the absence of the nondegeneracy condition, the stability results of \cite{Wei} do not hold when $(a,b)$ falls on the Felli-Schneider curve.

\vskip0.1in

We refer the readers to \cite{Ary,Bha,De,Liu,Pic,Zha,Zho} for various quantitative stability results on other Sobolev-type inequalities.

\vskip0.1in

In this paper, we aim to derive appropriate quantitative stability results of the equation \eqref{eq} in the case $a<0,b=b_{\text{FS}}(a)$. There are two main motivations for our work.

\vskip0.1in
The first motivation comes from the fact that Wei and Wu's result \eqref{www} does not cover the Felli-Schneider curve and its proof can hardly be applied to this situation. One of the key tools utilized in their work is the non-degeneracy of the equation \eqref{eq}. Indeed, to the best of our knowledge, all such type quantitative stability results to date are dependent on the non-degeneracy. However, it is well known that (see \cite{Fel,Fra1} for example) the equation \eqref{eq} is non-degenerate when $a\geq 0$ or $a<0,b>b_{\text{FS}}(a)$, but degenerate when $a<0,b=b_{\text{FS}}(a)$. Specifically, consider the linearized equation of \eqref{eq} near some Talenti bubble $U_{\lambda_0}$:
\begin{align}\label{line}
    \div(|x|^{-2a}\nabla v)+(p-1)|x|^{-pb}U_{\lambda_0}^{p-2}v=0.
\end{align}
We use $\Bar{X}_{\lambda_0}$ to denote the set of solutions to \eqref{line}. Clearly $\Bar{X}_{\lambda_0}$ contains the trivial elements $t\partial_\lambda U_{\lambda_0},t\in\R$. We also define $X_{\lambda_0}$ to be the orthogonal complement of $ \text{Span}\{\partial_\lambda U_{\lambda_0}\}$ in $\Bar{X}_{\lambda_0}$ with respect to the $D_a^{1,2}(\R^n)$ scalar product. When $a\geq 0$ or $a<0,b>b_{\text{FS}}(a)$, $X_{\lambda_0}=\emptyset$ and we say \eqref{eq} is non-degenerate. When $a<0$ and $b=b_{\text{FS}}(a)$, $X_{\lambda_0}\neq \emptyset$ and we say \eqref{eq} is degenerate  (see Lemma \ref{le2} for an equivalent formulation). Due to the degeneracy, it is easy to construct examples which do not satisfy \eqref{www}. So a natural and challenging question is: is it possible to replace $F_1$ by another function $F$ such that the estimate \eqref{www} vaild in the case $a<0,b=b_{\text{FS}}(a)$?

\vskip0.1in
The second motivation comes from the quantitative stability of the Caffarelli-Kohn-Nirenberg inequality \eqref{ckn} in the functional setting. The stability of functional inequalities was initially raised by Br\'ezis and Lieb \cite{Bre}. They asked whether the homogeneous Sobolev inequality can be reinforced. This question was answered affirmatively by Bianchi and Egnell \cite{Bia}. After the work \cite{Bia}, the stability of inequalities has drawn attention and has been addressed by lots of researchers. We do not attempt a survey of the extensive literature, but refer the readers to \cite{Che,Dol0,Fig1,Fig2,Fra,Fra1,Wei,Zha,Zho} and the references therein for more background. Similar to the stability in the critical point setting, many quantitative stability results of functional inequalities depend on the non-degeneracy condition. Recently some interesting degenerate stability results were discovered by Engelstein, Neumayer and Spolaor \cite{Eng}. Their results were later improved by Frank \cite{Fra}. 

Let's go back to our setting. In the case $a\geq 0$ or $a<0,b>b_{\text{FS}}(a)$, Wei and Wu \cite{Wei} established the following sharp quadratic estimate
\begin{align}
    \norm*{u}_{D_a^{1,2}(\R^n)}^2-S(a,b,n)\norm*{|x|^{-b}u}_{L^p(\R^n)}^2\geq c(a,b,n)\inf_{s\in\R,\lambda>0}\norm*{u-sU_\lambda}_{D_a^{1,2}(\R^n)}^2.\nonumber
\end{align}
In the case $a<0,b=b_{\text{FS}}(a)$, due to the degeneracy, the above estimate no longer hold. Recently, Frank and Peteranderl \cite{Fra1} studied this case and found the optimal exponent is $4$:
\begin{equation*}
    \norm*{u}_{D_a^{1,2}(\R^n)}^4-S(a,b,n)^2\norm*{|x|^{-b}u}_{L^p(\R^n)}^4\geq c(a,b,n)\inf_{s\in\R,\lambda>0}\norm*{u-sU_\lambda}_{D_a^{1,2}(\R^n)}^4.
\end{equation*}
Moreover, if the infimum is attained by $s_0U_{\lambda_0}$, their proof yields
\begin{align}
    \norm*{u}_{D_a^{1,2}(\R^n)}^2-S(a,b,n)\norm*{|x|^{-b}u}_{L^p(\R^n)}^2\geq c(a,b,n)\norm*{\Pi_{X_{\lambda_0}}^\perp(u)-s_0U_{\lambda_0}}_{D_a^{1,2}(\R^n)}^2,\nonumber
\end{align}
where $\Pi_{X_{\lambda_0}}$ is the orthogonal projection in $D_a^{1,2}(\R^n)$ onto $X_{\lambda_0}$ and $\Pi_{X_{\lambda_0}}^\perp=1-\Pi_{X_{\lambda_0}}$. Since in some sense these two types of stability are closely related, we believe that there exist corresponding degenerate stabilities in the critical point setting.

\vskip0.1in
Let us state our main results.
\begin{theorem}\label{thm1}
    Assume $n\geq 2$, $a<0$, $b=b_{\mathrm{FS}}(a)$ and $p=\frac{2n}{n-2+2(b-a)}$. Then there exists a positive constant $C=C(a,n)$ such that, for any nonnegative function $u\in D_a^{1,2}(\R^n)$ satisfying
    \begin{equation}\label{zzz1}
     \frac{1}{2}S(a,b,n)^{\frac{p}{p-2}}\leq \int_{\R^n}|x|^{-2a}|\nabla u|^2\mathrm{d}x\leq \frac{3}{2}S(a,b,n)^{\frac{p}{p-2}},
    \end{equation}
    it holds that
    \begin{equation}\label{main-re1}
        \inf_{\lambda>0}\norm*{u- U_{\lambda}}_{D_a^{1,2}(\R^n)}^3\leq C\norm*{H(u)}_{D_a^{-1,2}(\R^n)}
    \end{equation}
    The above estimate is sharp in the sense that there exists a nonnegative sequence $\{w_k\}_k\subset D_a^{1,2}(\R^n)$  satisfying \eqref{zzz1} such that $w_k\notin \mathcal{M}_{a,b,n}$,
    \begin{equation*}
        \inf_{\lambda>0}\norm*{w_k- U_{\lambda}}_{D_a^{1,2}(\R^n)}\rightarrow 0
    \end{equation*}
    and
    \begin{equation*}
        \limsup_{k\rightarrow+\infty}\frac{\norm*{H(w_k)}_{D_a^{-1,2}(\R^n)}}{  \inf\limits_{\lambda>0}\norm*{w_k- U_{\lambda}}_{D_a^{1,2}(\R^n)}^3}<+\infty.
    \end{equation*}
    Moreover, if the infimum in \eqref{main-re1} is attained by $U_{\lambda_0}$, then there exists a positive constant $C_1=C_1(a,n)$ such that
    \begin{align}\label{main-re2}
        \norm*{\Pi_{X_{\lambda_0}}^\perp(u)-U_{\lambda_0}}_{D_a^{1,2}(\R^n)}\leq C_1\norm*{H(u)}_{D_a^{-1,2}(\R^n)}
    \end{align}
    provided
    \begin{align}\label{qaz1}
        \norm*{\Pi_{X_{\lambda_0}}^\perp(u)-U_{\lambda_0}}_{D_a^{1,2}(\R^n)}\geq C_1F_2\left(\norm*{\Pi_{X_{\lambda_0}}(u)}_{D_a^{1,2}(\R^n)}\right),
    \end{align}
    where
    \begin{align}
        F_2(x):=\begin{cases}
            x^2,&p\geq3,\\
            x^{p-1},&p<3.
        \end{cases}\nonumber
    \end{align}
\end{theorem}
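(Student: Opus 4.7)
The plan is to pick a best-approximating Talenti bubble $U_{\lambda_0}$ and decompose the error $\rho = u - U_{\lambda_0}$ into its component $\rho_0 \in X_{\lambda_0}$ along the degenerate kernel and its component $\rho_1 \perp X_{\lambda_0}$ transverse to $\ker L$, then test $H(u)$ against $\rho_0$ and $\rho_1$ separately. By a standard Struwe-type compactness argument based on \eqref{zzz1}, matters reduce to the regime where $\inf_\lambda \|u - U_\lambda\|_{D_a^{1,2}}$ is as small as desired and is attained at some $U_{\lambda_0}$. The Euler--Lagrange condition for the minimizing $\lambda$ gives $\rho \perp \partial_\lambda U_{\lambda_0}$, so both $\rho_0$ and $\rho_1$ are orthogonal to $\partial_\lambda U_{\lambda_0}$, and $\rho_1 = \Pi_{X_{\lambda_0}}^\perp(u) - U_{\lambda_0}$ is exactly the quantity that appears in \eqref{main-re2}.

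The transverse estimate for $\rho_1$ rests on spectral coercivity of the linearized operator $L$ of $H$ at $U_{\lambda_0}$. Writing $H(U_{\lambda_0} + \rho) = L\rho + N(\rho)$ with a nonlinear remainder satisfying $\|N(\rho)\|_{D_a^{-1,2}} \lesssim \|\rho\|^{\min(2, p-1)}$, the fundamental spectral fact already used in \cite{Fel,Fra1} is that $\ker L = X_{\lambda_0} \oplus \mathrm{Span}\{\partial_\lambda U_{\lambda_0}\}$ and $L$ has a positive spectral gap on its orthogonal complement. Using $L\rho_0 = 0$, testing $H(u)$ against $\rho_1$ gives
\begin{equation*}
c\|\rho_1\|^2 \leq |\langle L\rho_1, \rho_1\rangle| \leq \|H(u)\|_{D_a^{-1,2}} \|\rho_1\| + C\, F_2(\|\rho\|)\,\|\rho_1\|,
\end{equation*}
so $\|\rho_1\| \lesssim \|H(u)\|_{D_a^{-1,2}} + F_2(\|\rho_0\|) + F_2(\|\rho_1\|)$. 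Since $p > 2$ we have $F_2(x)/x \to 0$, so the last term is absorbed for small $\rho$, proving \eqref{main-re2} whenever \eqref{qaz1} lets us discard the $F_2(\|\rho_0\|)$ term.

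The degenerate direction is the heart of the argument. Testing $H(u)$ against $\rho_0 \in \ker L$ annihilates the linear piece by self-adjointness, leaving
\begin{equation*}
\langle H(u), \rho_0\rangle = \langle N(\rho), \rho_0\rangle.
\end{equation*}
Expanding the nonlinearity one order further (and, for $p < 3$, using an integral-remainder representation in place of classical Taylor), the key input is that the pure cubic diagonal $\int |x|^{-pb} U_{\lambda_0}^{p-3}\rho_0^3 \,\mathrm{d}x$ vanishes for every $\rho_0 \in X_{\lambda_0}$, by a parity/orthogonality argument on the explicit spherical-harmonic description of $X_{\lambda_0}$ available on the Felli--Schneider curve; this is precisely the vanishing third variation underlying Frank and Peteranderl's quartic functional stability. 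Consequently the leading contribution to $\langle N(\rho), \rho_0\rangle$ is the quartic diagonal $c\int |x|^{-pb} U_{\lambda_0}^{p-4}\rho_0^4 \,\mathrm{d}x \gtrsim \|\rho_0\|^4$, while cross terms with $\rho_1$ have size $O(\|\rho_0\|^2\|\rho_1\|) + O(\|\rho_0\|\|\rho_1\|^2)$ and are absorbed via the Step~2 bound. This produces $\|\rho_0\|^4 \lesssim \|H(u)\|_{D_a^{-1,2}}\|\rho_0\|$, hence $\|\rho_0\| \lesssim \|H(u)\|_{D_a^{-1,2}}^{1/3}$; combined with $\|\rho_1\| \lesssim \|H(u)\|_{D_a^{-1,2}} + F_2(\|\rho_0\|)$ from Step~2, this proves \eqref{main-re1}. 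Sharpness is obtained by taking $w_k = U_1 + \eps_k \phi$ with a fixed nonzero $\phi \in X_1$ and $\eps_k \to 0$: a direct expansion gives $\|H(w_k)\|_{D_a^{-1,2}} \sim \eps_k^3$ versus $\inf_\lambda \|w_k - U_\lambda\|_{D_a^{1,2}} = \eps_k$.

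The hardest ingredient is the degenerate-direction analysis: verifying that the cubic diagonal $\int U^{p-3}\rho_0^3$ truly vanishes for all $\rho_0 \in X_{\lambda_0}$ and establishing the quartic coercivity $\int U^{p-4}\rho_0^4 \gtrsim \|\rho_0\|^4$ (with an appropriate Hardy-type replacement, since $U^{p-4}$ is singular at the origin when $p<4$). Both rely on the concrete spherical-harmonic basis of $X_{\lambda_0}$ produced by the Felli--Schneider spectral analysis, and on careful bookkeeping of all $\rho_0,\rho_1$ cross terms to ensure nothing of comparable order competes with the leading quartic piece. In the non-smooth regime $p<3$, where the nonlinearity $|t|^{p-2}t$ is only Hölder differentiable, the absence of classical higher-order Taylor expansion forces an integral-remainder formulation, which is exactly what dictates the exponent $p-1$ appearing in $F_2$ and requires additional effort to retain the quartic lower bound needed for \eqref{main-re1}.
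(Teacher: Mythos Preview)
Two steps in your argument do not close. First, your Step~2 coercivity claim is false: by Lemma~\ref{le2} the linearized operator has a \emph{negative} direction along $U_{\lambda_0}$ itself (first eigenvalue $\gamma_1=1<p-1$), and since your $\rho_1$ is not orthogonal to $U_{\lambda_0}$, the form $\langle L\rho_1,\rho_1\rangle$ can vanish; the paper splits off this component and tests against the bubble separately, cf.\ \eqref{edc1}--\eqref{edc2}. Second, and more fundamentally, the degenerate-direction estimate fails as written. The quartic diagonal carries the coefficient $\tfrac{(p-1)(p-2)(p-3)}{6}$, which is negative for $2<p<3$, so your claimed coercivity $c\int|x|^{-pb}U_{\lambda_0}^{p-4}\rho_0^4\gtrsim\|\rho_0\|^4$ has the wrong sign there. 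Even for $p\ge3$, the cubic cross term $\int|x|^{-pb}U_{\lambda_0}^{p-3}\rho_0^2\rho_1$ has size $\|\rho_0\|^2\|\rho_1\|$, and your own Step~2 bound $\|\rho_1\|\lesssim\|H(u)\|+\|\rho_0\|^2$ yields, upon substitution, a competing term of order $\|\rho_0\|^4$ with an \emph{uncontrolled} constant---it cannot be ``absorbed''. The paper resolves this by testing against the degenerate part plus \emph{twice} the transverse part (this factor $2$ is optimal; any other fails for $p$ near $2^*$, see Remark~\ref{re1}) and simultaneously against the bubble to extract the scalar correction $\alpha_k$; the resulting lower bound is the specific combination $E_0+F$ of Lemma~\ref{le4}, whose strict positivity is a delicate computation imported from \cite{Fra1}. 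Neither the pure quartic nor $E_0$ alone is positive in general.

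Your sharpness sequence is also wrong. For $w_k=U_1+\eps_k\phi$ with $\phi\in X_1$, since $L\phi=0$ the expansion of $H(w_k)$ begins at order two, $H(w_k)=\tfrac{(p-1)(p-2)}{2}\eps_k^2|x|^{-pb}U_1^{p-3}\phi^2+O(\eps_k^3)$, and the quadratic term does not vanish, so $\|H(w_k)\|_{D_a^{-1,2}}\sim\eps_k^2$, not $\eps_k^3$. Remark~\ref{re2} explicitly notes that this naive choice yields only a quadratic estimate; the correct construction \eqref{cons} includes a second-order correction $\mu_k^2\eta$ (and a scalar shift $1-C_0\mu_k^2$ on the bubble) where $\eta$ solves the Schr\"odinger-type equation \eqref{cc1}, cf.\ Lemma~\ref{le3}, designed precisely to cancel the $O(\mu_k^2)$ contribution.
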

\begin{theorem}\label{thm2}
    Assume $n\geq 2$, $\nu\geq 2$, $a<0$, $b=b_{\mathrm{FS}}(a)$ and $p=\frac{2n}{n-2+2(b-a)}$. Then there exists a positive constant $C=C(a,\nu,n)$ such that, for any nonnegative function $u\in D_{a}^{1,2}(\R^n)$ satisfying 
    \begin{equation}\label{gg}
     \left(\nu-\frac{1}{2}\right)S(a,b,n)^{\frac{p}{p-2}}\leq\int_{\R^n}|x|^{-2a}|\nabla u|^2\mathrm{d}x\leq \left(\nu+\frac{1}{2}\right)S(a,b,n)^{\frac{p}{p-2}},
    \end{equation}
    if the infimum
    \begin{align}
        \inf_{\substack{s_i\in\R_+\\
    1\leq i\leq\nu}}\norm*{u-\sum_{i=1}^\nu U_{s_i}}_{D_a^{1,2}(\R^n)}\nonumber
    \end{align}
    is attained by $U_{\lambda_i},1\leq i\leq \nu$, then 
    \begin{equation}\label{main-re3}
        \norm*{\Pi_{X}^\perp(u)-\sum_{i=1}^\nu U_{\lambda_i}}_{D_a^{1,2}(\R^n)}\leq CF_1\left(\norm*{H(u)}_{D_a^{-1,2}(\R^n)}\right)
    \end{equation}
    provided
    \begin{align}\label{qaz2}
        \norm*{\Pi_{X}^\perp(u)-\sum_{i=1}^\nu U_{\lambda_i}}_{D_a^{1,2}(\R^n)}\geq CF_1\left(\norm*{\Pi_{X}(u)}^2_{D_a^{1,2}(\R^n)}\right).
    \end{align}
    Here $X:=\mathop{\oplus}\limits_{i=1}^\nu X_{\lambda_i}$.
\end{theorem}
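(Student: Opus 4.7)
The plan is to extend Wei and Wu's finite-dimensional reduction to accommodate the degenerate modes collected in $X = \oplus_{i=1}^\nu X_{\lambda_i}$. Set $\sigma := \sum_{i=1}^\nu U_{\lambda_i}$ and $\rho := u-\sigma$, and further split $\rho = \rho_1 + \rho_2$ with $\rho_1 := \Pi_X(\rho)$ and $\rho_2 := \Pi_X^\perp(\rho)$; since each $U_{\lambda_j}$ is radial while $X_{\lambda_i}$ consists of non-radial modes, $\Pi_X(\sigma)$ is a small bubble-interaction term, so $\rho_2$ agrees with $\Pi_X^\perp(u)-\sigma$ up to such interactions, and the left hand side of \eqref{main-re3} is essentially $\|\rho_2\|_{D_a^{1,2}}$. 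Optimality of the scales $(\lambda_1,\dots,\lambda_\nu)$ gives the orthogonality $\langle\rho,\partial_\lambda U_{\lambda_i}\rangle_{D_a^{1,2}} = 0$ for every $i$, and by construction $\rho_2\perp X$. The energy condition \eqref{gg} together with a standard profile decomposition ensures that the bubbles concentrate at well-separated scales, so that off-diagonal interactions like $\int|x|^{-pb}U_{\lambda_i}^{p-1}U_{\lambda_j}$ are small and controlled by $F_1(\|H(u)\|_{D_a^{-1,2}})$, exactly as in \cite{Wei}.

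Next, expand
\[H(u) = H(\sigma) + L_\sigma(\rho_1+\rho_2) + \mathcal{N}(\sigma,\rho_1+\rho_2),\]
where $L_\sigma\phi := \div(|x|^{-2a}\nabla\phi) + (p-1)|x|^{-pb}\sigma^{p-2}\phi$ and $\mathcal{N}$ gathers the superlinear remainder. Pairing against $\rho_2$ and mimicking \cite{Wei}, the piece $\langle H(\sigma),\rho_2\rangle$ is bounded by $F_1(\|H(u)\|_{D_a^{-1,2}})\|\rho_2\|_{D_a^{1,2}}$ after a bootstrap. Felli and Schneider's spectral analysis identifies $\ker(-L_{U_{\lambda_i}}) = \mathrm{span}(\partial_\lambda U_{\lambda_i})\oplus X_{\lambda_i}$, and the unique negative direction is handled via the near-isometry $\|u\|_{D_a^{1,2}}^2\approx \nu\|U_{\lambda_1}\|_{D_a^{1,2}}^2$ coming from \eqref{gg}; combined with the orthogonality of $\rho_2$ to $\{\partial_\lambda U_{\lambda_i}\}\cup X$, this yields the coercivity
\[-\langle L_\sigma\rho_2,\rho_2\rangle \geq c\|\rho_2\|_{D_a^{1,2}}^2 - C\,\text{(bubble interactions)}\,\|\rho_2\|_{D_a^{1,2}}.\]
The cross term $\langle L_\sigma\rho_1,\rho_2\rangle$ vanishes to leading order because $L_{U_{\lambda_i}}\rho_1 = 0$ at each bubble's natural scale, and the mismatch $\sigma^{p-2}-\sum U_{\lambda_i}^{p-2}$ contributes only to the bubble-interaction quantity.

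The genuinely new ingredient is the nonlinear piece $\langle\mathcal{N},\rho_2\rangle$, where $\rho_1$ enters for the first time. For $p\geq 3$, a Taylor expansion gives the pointwise bound $|\mathcal{N}|\lesssim |x|^{-pb}\sigma^{p-3}(\rho_1^2+\rho_2^2) + |x|^{-pb}|\rho|^{p-1}$, and the quadratic-in-$\rho_1$ contribution is $\lesssim \|\rho_1\|_{D_a^{1,2}}^2\|\rho_2\|_{D_a^{1,2}}$; for $2<p<3$ the sublinear inequality $\bigl||s+t|^{p-2}(s+t)-s^{p-1}-(p-1)s^{p-2}t\bigr|\lesssim |t|^{p-1}$ yields instead $\|\rho_1\|_{D_a^{1,2}}^{p-1}\|\rho_2\|_{D_a^{1,2}}$. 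In either case the $\rho_1$-contribution matches $F_1(\|\rho_1\|_{D_a^{1,2}}^2)\|\rho_2\|_{D_a^{1,2}}$, which by the hypothesis \eqref{qaz2} is at most $\tfrac{1}{C}\|\rho_2\|_{D_a^{1,2}}^2$ and can therefore be absorbed into the coercive term. Combining this with the bubble-interaction bound for $\langle H(\sigma),\rho_2\rangle$ and dividing by $\|\rho_2\|_{D_a^{1,2}}$ produces \eqref{main-re3}. The main obstacle will be implementing these estimates uniformly in the bubble configuration and, for $2<p<3$, taming the singularity of $\sigma^{p-2}$: the pointwise comparison in $\mathcal{N}$ has to be replaced by an integral one split according to each bubble's concentration zone, and the bound $\int|x|^{-pb}|\rho_1|^{p-1}|\rho_2|\lesssim \|\rho_1\|_{D_a^{1,2}}^{p-1}\|\rho_2\|_{D_a^{1,2}}$ will demand a delicate Sobolev embedding argument tailored to the weighted geometry of $D_a^{1,2}(\R^n)$.
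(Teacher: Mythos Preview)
Your outline captures the right intuition, but there are two genuine gaps that would prevent the proof from closing.

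First, your handling of the negative direction is insufficient. You write that ``the unique negative direction is handled via the near-isometry $\|u\|_{D_a^{1,2}}^2\approx \nu\|U_{\lambda_1}\|_{D_a^{1,2}}^2$ coming from \eqref{gg}.'' For $\nu=1$ this would work, but for $\nu\geq 2$ the single energy constraint cannot control all $\nu$ coefficients along $U_{\lambda_1},\dots,U_{\lambda_\nu}$ simultaneously; your $\rho_2 = \Pi_X^\perp(\rho)$ still carries a component along each $U_{\lambda_i}$, and the claimed coercivity $-\langle L_\sigma\rho_2,\rho_2\rangle\geq c\|\rho_2\|^2 - \ldots$ therefore fails. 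The paper resolves this by testing the equation against each bubble separately (see \eqref{fin7}), obtaining $|\beta_i|\lesssim \|H_1(v)\|+Q^2+\|\psi_1\|^{\min\{2,p-1\}}+\ldots$; only after splitting off these components does coercivity hold on the remaining piece (this is \eqref{fin9}).

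Second, and more seriously, you are treating the bubble-interaction term $\langle H(\sigma),\rho_2\rangle$ too casually. Your claim that it is $\lesssim F_1(\|H(u)\|)\,\|\rho_2\|$ ``after a bootstrap'' packages two nontrivial steps. The bootstrap requires an \emph{a priori} bound $Q\lesssim\|H(u)\|+\|\Pi_X(u)\|^2$, which is obtained not by testing with $\rho_2$ but by testing with $\partial_\lambda U_{\lambda_k}$ (the paper's \eqref{lower bd}--\eqref{we6}); moreover, the resulting cross term $\int\sigma^{p-2}\rho\,\partial_s V_{t_k}$ must itself be shown to be $o(Q)$, and this (see \eqref{we5}) uses detailed pointwise information. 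More fundamentally, bounding $\|H(\sigma)\|_{H^{-1}}$ directly by $F_1(Q)$ does \emph{not} follow from H\"older or the duality $L^{p'}\hookrightarrow H^{-1}$: for instance at $p=3$ a direct $L^{p'}$ estimate gives $\|H(\sigma)\|_{L^{p'}}\lesssim |\ln Q|^{2/3}Q$, whereas $F_1(Q)=Q|\ln Q|^{1/2}$, so you lose a factor $|\ln Q|^{1/6}$. The paper's mechanism is different: it \emph{constructs} an approximation $\rho_0$ by a nonlinear finite-dimensional reduction (equations \eqref{eq3}--\eqref{fin5}), with weighted $L^\infty$ control on $\rho_0$ and the sharp bound $\|\rho_0\|_{H^1}\lesssim F_1(Q)$. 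One then decomposes $\rho=\rho_0+(\text{residual})$; it is the residual that gets split along $\{V_{t_i},\partial_s V_{t_i}, Y_{t_i}\}$ and tested, and the pointwise control on $\rho_0$ is precisely what makes estimates such as \eqref{fin7} and \eqref{we5} go through. Your decomposition $\rho=\Pi_X(\rho)+\Pi_X^\perp(\rho)$ bypasses this intermediate object and cannot recover the sharp $F_1$.
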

\begin{remark}
    In the single bubble case (Theorem \ref{thm1}), we in fact do not need the nonnegativity of the function $u$. Precisely, for any function $u\in D_a^{1,2}(\R^n)$ satisfying \eqref{zzz1}, it holds that
    \begin{align}
        \inf_{\lambda>0,\theta\in\{-1,1\}}\norm*{u-\theta U_{\lambda}}_{D_a^{1,2}(\R^n)}^3\leq C\norm*{H(u)}_{D_a^{-1,2}(\R^n)}.\nonumber
    \end{align}
    This follows from Theorem \ref{thm3} below  and the fact that for each sign-changing solution $u_0$ to the equation \eqref{eq}, we have the following energy estimate
    \begin{equation*}
        \int_{\R^n}|x|^{-2a}|\nabla u_0|^2\mathrm{d}x\geq 2S(a,b,n)^{\frac{p}{p-2}}.
    \end{equation*}
    This estimate can be proved by testing the equation \eqref{eq} with $(u_0)^+$ and $(u_0)^-$, and applying the inequality \eqref{ckn}.
\end{remark}
\begin{remark}
    In the case $\nu=1$, Theorem \ref{thm1} completes the stability analysis of the Caffarelli-Kohn-Nirenberg inequality within the critical point framework initiated by Wei and Wu \cite{Wei} (see \eqref{www}). It remains unclear whether the cubic estimate \eqref{main-re1} holds for $\nu\geq2$.
\end{remark}
\begin{remark}
    Unlike quantitative stabilities in the functional setting, we do not always have \eqref{main-re2} or \eqref{main-re3}. Indeed, in Section \ref{sec3} we show that, for any $0<\delta\ll 1$, there exists a positive function $u$ (see \eqref{cons}) such that the infimum in \eqref{main-re1} is attained by $U_1$ and 
    \begin{align}
         \norm*{\Pi_{X_1}(u)}_{D_a^{1,2}(\R^n)}\approx \delta,\quad\norm*{\Pi_{X_1}^\perp(u)-U_{1}}_{D_a^{1,2}(\R^n)}\approx \delta^2,\quad \norm*{H(u)}_{D_a^{-1,2}(\R^n)}\approx \delta^3.\nonumber
    \end{align}
    Thus in Theorem \ref{thm1} and Theorem \ref{thm2}, we pose the restrictions \eqref{qaz1} and \eqref{qaz2} which indicate that the function $u$ is not too degenerate. From the example above, we see these restrictions are optimal when $\nu=1,p\geq3$ or $\nu\geq2,p>3$. We wonder whether these restrictions are sharp in other cases. 
\end{remark}
The following result is a direct consequence of the above theorems. To derive it, one just need to note that, for any $u\in D_a^{1,2}(\R^n)$ centrally symmetric and $\lambda>0$, we have $\Pi_{X_\lambda}(u)=0$.
\begin{corollary}
    Assume $n\geq 2$, $\nu\geq 1$, $a<0$, $b=b_{\mathrm{FS}}(a)$ and $p=\frac{2n}{n-2+2(b-a)}$. Then there exists a positive constant $C=C(a,\nu,n)$ such that, for any nonnegative centrally symmetric function $u\in D_{a}^{1,2}(\R^n)$ satisfying \eqref{gg}, it holds that
    \begin{align}
          \inf_{\substack{s_i\in\R_+\\
    1\leq i\leq\nu}}\norm*{u-\sum_{i=1}^\nu U_{s_i}}_{D_a^{1,2}(\R^n)}\leq CF_1\left(\norm*{H(u)}_{D_a^{-1,2}(\R^n)}\right).\nonumber
    \end{align}
    Moreover, the above estimate is sharp.
\end{corollary}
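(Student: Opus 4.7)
The plan is to use central symmetry to kill the projection of $u$ onto the degenerate directions $X=\oplus_{i=1}^\nu X_{\lambda_i}$, thereby reducing the corollary to Theorems~\ref{thm1} and~\ref{thm2} with the restrictions \eqref{qaz1} and \eqref{qaz2} trivially satisfied.

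First I would verify the observation flagged in the text: if $u\in D_a^{1,2}(\R^n)$ is centrally symmetric, then $\Pi_{X_\lambda}(u)=0$ for every $\lambda>0$. By the spectral analysis of \cite{Fel} (underlying Lemma~\ref{le2}), every nonzero element of $X_\lambda$ is of the form $g(|x|)Y(x/|x|)$ with $Y$ a spherical harmonic of degree one, hence antipodally odd: $v(-x)=-v(x)$. For such $v$, the change of variables $x\mapsto -x$ in $\int_{\R^n}|x|^{-2a}\nabla u\cdot\nabla v\,dx$ immediately yields zero, so $\langle u,v\rangle_{D_a^{1,2}(\R^n)}=0$ and hence $\Pi_{X_\lambda}(u)=0$. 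Consequently $\Pi_{X}(u)=0$ for any tuple $(\lambda_1,\dots,\lambda_\nu)$.

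Next I would produce an attaining tuple and apply the theorems. One can reduce to the regime $\norm{H(u)}_{D_a^{-1,2}(\R^n)}\ll 1$, since outside it the estimate is vacuous after enlarging $C$. In this regime, a standard compactness argument (Struwe's profile decomposition adapted to the Caffarelli-Kohn-Nirenberg setting, combined with the energy window \eqref{gg}, which rules out both $\lambda_i\to 0,\infty$ and bubble collisions) shows that the infimum $\inf_{s_i\in\R_+}\norm{u-\sum_{i=1}^\nu U_{s_i}}_{D_a^{1,2}(\R^n)}$ is attained by some $(U_{\lambda_1},\dots,U_{\lambda_\nu})$. With $\Pi_X(u)=0$ we have $\Pi_X^\perp(u)=u$, so the right-hand sides of \eqref{qaz1} and \eqref{qaz2} are $F_2(0)=0$ and $F_1(0)=0$ respectively, and both conditions hold automatically. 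Invoking \eqref{main-re2} for $\nu=1$ or \eqref{main-re3} for $\nu\geq 2$ then delivers the claimed $F_1$-bound; note that for $\nu=1$ the linear estimate \eqref{main-re2} coincides with $F_1(x)=x$.

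The main (and only) obstacle is the bookkeeping for attainment, as Theorems~\ref{thm1}--\ref{thm2} are phrased conditionally while the corollary is not; I expect this to be routine via the compactness argument above, since for $\nu=1$ the continuity of $\lambda\mapsto \norm{u-U_\lambda}_{D_a^{1,2}(\R^n)}^2$ on $(0,\infty)$ together with $U_\lambda\weakto 0$ as $\lambda\to 0,\infty$ already forces attainment, and the multi-bubble case reduces to this through profile decomposition. For sharpness it suffices to feed the functional the centrally symmetric counterexamples already present in \cite{Wei} (for $\nu\geq 2$ these can be built as nested radial scalings around the origin); by the symmetry argument their $\Pi_X$-projection vanishes, so Wei-Wu's sharpness constructions transplant to the Felli-Schneider regime without modification.
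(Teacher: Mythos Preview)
Your proposal is correct and follows essentially the same approach as the paper: both reduce the corollary to Theorems~\ref{thm1} and~\ref{thm2} by observing that central symmetry forces $\Pi_{X_\lambda}(u)=0$, which makes the restrictions \eqref{qaz1} and \eqref{qaz2} trivially satisfied and turns \eqref{main-re2}, \eqref{main-re3} into the claimed $F_1$-bound. Your treatment is in fact more detailed than the paper's one-line remark---you spell out the antipodal-oddness of elements of $X_\lambda$, address attainment of the infimum, and indicate how sharpness is inherited from the radial Wei--Wu constructions---whereas the paper simply notes $\Pi_{X_\lambda}(u)=0$ (via the cylinder picture, after Lemma~\ref{le2}) and declares the corollary a direct consequence.
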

Thanks to the global compactness principle (see \cite[Proposition 3.2]{Wei} for example), to establish Theorem \ref{thm1} and Theorem \ref{thm2}, we only need to prove their local versions.
\begin{theorem}\label{thm3}
    Assume $n\geq 2$, $a<0$, $b=b_{\mathrm{FS}}(a)$ and $p=\frac{2n}{n-2+2(b-a)}$. Then there exist two positive constants $\delta=\delta(a,n)$ and $C=C(a,n)$ such that, for any function $u\in D_a^{1,2}(\R^n)$ satisfying
    \begin{align}
    \norm*{u- U_1}_{D_a^{1,2}(\R^n)}\leq \delta,\nonumber
    \end{align}
    the estimate \eqref{main-re1} holds and is optimal. Moreover, if the infimum is attained by $U_{\lambda_0}$ and \eqref{qaz1} holds, then \eqref{main-re2} holds.
\end{theorem}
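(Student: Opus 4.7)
The plan is to expand $H(u)$ around an optimally chosen bubble and exploit the spectral structure of the linearized operator, using the precise description of $X_{\lambda_0}$ to gain an extra order of decay in the degenerate direction. With $\delta$ small, a standard implicit function argument produces $\lambda_0$ close to $1$ realizing $\inf_\lambda\|u-U_\lambda\|_{D_a^{1,2}(\R^n)}$ with orthogonality $\langle u-U_{\lambda_0},\partial_\lambda U_{\lambda_0}\rangle_{D_a^{1,2}(\R^n)}=0$. Writing $\rho:=u-U_{\lambda_0}$ and $\rho_X:=\Pi_{X_{\lambda_0}}(\rho)$, $\rho_\perp:=\rho-\rho_X$ (observing that $U_{\lambda_0}$ is orthogonal to $X_{\lambda_0}$ by angular parity, so $\rho_X=\Pi_{X_{\lambda_0}}(u)$), one Taylor expands
\begin{equation*}
H(u)=L_{\lambda_0}\rho+N_2(\rho)+R(\rho),
\end{equation*}
with $L_{\lambda_0}v=\div(|x|^{-2a}\nabla v)+(p-1)|x|^{-pb}U_{\lambda_0}^{p-2}v$, $N_2(\rho)=\tfrac{(p-1)(p-2)}{2}|x|^{-pb}U_{\lambda_0}^{p-3}\rho^2$ and $|R(\rho)|\lesssim |x|^{-pb}(U_{\lambda_0}^{p-4}|\rho|^3+|\rho|^{p-1})$. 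By Lemma~\ref{le2}, $\ker L_{\lambda_0}=X_{\lambda_0}\oplus\mathrm{Span}\{\partial_\lambda U_{\lambda_0}\}$ and $L_{\lambda_0}$ is coercive on its $D_a^{1,2}(\R^n)$-orthogonal complement.

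The linear estimate~\eqref{main-re2} is obtained by testing $H(u)$ against $\rho_\perp$. Self-adjointness of $L_{\lambda_0}$ yields $\langle L_{\lambda_0}\rho,\rho_\perp\rangle=\langle L_{\lambda_0}\rho_\perp,\rho_\perp\rangle\gtrsim\|\rho_\perp\|_{D_a^{1,2}(\R^n)}^2$; bounding the nonlinear remainder via the weighted Sobolev inequality~\eqref{ckn} and Hölder produces
\begin{equation*}
\|H(u)\|_{D_a^{-1,2}(\R^n)}\gtrsim\|\rho_\perp\|_{D_a^{1,2}(\R^n)}-C\bigl(\|\rho_X\|_{D_a^{1,2}(\R^n)}^2+\|\rho_\perp\|_{D_a^{1,2}(\R^n)}^2+\|\rho\|_{D_a^{1,2}(\R^n)}^{p-1}\bigr).
\end{equation*}
Under~\eqref{qaz1}, with $F_2$ chosen precisely to swallow the quadratic (when $p\ge 3$) or $(p-1)$-power (when $p<3$) cross term, the error is absorbed and~\eqref{main-re2} follows.

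For the cubic estimate~\eqref{main-re1} we test against $\rho_X$. The self-adjointness of $L_{\lambda_0}$ kills $\langle L_{\lambda_0}\rho,\rho_X\rangle$, and the crucial input is the \emph{cubic-vanishing identity}
\begin{equation*}
\int_{\R^n}|x|^{-pb}U_{\lambda_0}^{p-3}\phi^3\,\mathrm{d}x=0\qquad\text{for every }\phi\in X_{\lambda_0},
\end{equation*}
which follows from the Felli--Schneider description of $X_{\lambda_0}$ as products of a fixed radial profile with degree-one spherical harmonics: the angular integral $\int_{S^{n-1}}(c\cdot\omega)^3\,\mathrm{d}\omega$ vanishes by odd parity. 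This is the same identity driving the quartic degenerate functional stability of~\cite{Fra1}. The first non-trivial term is therefore quartic and, by the quartic coercivity established on $X_{\lambda_0}$ in~\cite{Fra1}, satisfies
\begin{equation*}
\langle H(u),\rho_X\rangle\gtrsim\|\rho_X\|_{D_a^{1,2}(\R^n)}^4-C\bigl(\|\rho_X\|_{D_a^{1,2}(\R^n)}^2\|\rho_\perp\|_{D_a^{1,2}(\R^n)}+\|\rho_\perp\|_{D_a^{1,2}(\R^n)}^3+\|\rho\|_{D_a^{1,2}(\R^n)}^{\min\{4,p\}}\bigr),
\end{equation*}
yielding $\|\rho_X\|_{D_a^{1,2}(\R^n)}^3\lesssim\|H(u)\|_{D_a^{-1,2}(\R^n)}$; combining with the orthogonal bound gives $\|\rho\|_{D_a^{1,2}(\R^n)}^3\lesssim\|H(u)\|_{D_a^{-1,2}(\R^n)}$, i.e.~\eqref{main-re1}.

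For optimality, fix $\phi\in X_1\setminus\{0\}$; the cubic-vanishing identity together with the Fredholm alternative for $L_1$ provides $\psi\perp\ker L_1$ solving $L_1\psi=-N_2(\phi)$. The family $w_k:=U_1+k^{-1}\phi+k^{-2}\psi$ satisfies $\|w_k-U_1\|_{D_a^{1,2}(\R^n)}\asymp k^{-1}$ and, by design, the $k^{-2}$ terms in $H(w_k)$ cancel, so $\|H(w_k)\|_{D_a^{-1,2}(\R^n)}\asymp k^{-3}$, proving the cubic exponent cannot be improved. The main obstacle is the combination of the cubic-vanishing identity and the quartic coercivity on $X_{\lambda_0}$: establishing the positive definiteness of the quartic form and managing the cross terms between $\rho_X$ and $\rho_\perp$ with exponents precise enough that both~\eqref{main-re1} and~\eqref{main-re2} (with the optimal threshold $F_2$) emerge simultaneously requires Emden--Fowler coordinates and the explicit spherical harmonic decomposition of $\ker L_{\lambda_0}$, paralleling the functional analysis in~\cite{Fra1}.
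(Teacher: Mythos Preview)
Your outline captures the right high-level structure (split off the degenerate part, use cubic vanishing on $X_{\lambda_0}$, invoke a quartic positivity from \cite{Fra1}), but two concrete steps fail as written.

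First, the claim that $L_{\lambda_0}$ is coercive on the $D_a^{1,2}$-orthogonal complement of its kernel is false: by Lemma~\ref{le2} the direction $U_{\lambda_0}$ satisfies $\langle -L_{\lambda_0}U_{\lambda_0}, U_{\lambda_0}\rangle=(2-p)\int_{\R^n}|x|^{-pb}U_{\lambda_0}^p<0$. Since your $\rho_\perp$ may have a component along $U_{\lambda_0}$, the inequality $\langle L_{\lambda_0}\rho_\perp,\rho_\perp\rangle\gtrsim\|\rho_\perp\|^2$ does not hold. The paper fixes this by splitting $\rho_\perp$ further as $c_0 U_{\lambda_0}+\rho_2$ and controlling $c_0$ separately via a test against $U_{\lambda_0}$; this is routine but cannot be skipped.

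The serious gap is in the cubic estimate. Testing $H(u)$ against $\rho_X$ alone does \emph{not} yield a coercive quartic: after the cubic vanishing, the raw fourth-order contribution is $\tfrac{(p-1)(p-2)(p-3)}{6}\int_{\R^n}|x|^{-pb}U_{\lambda_0}^{p-4}\rho_X^4$, which is \emph{negative} for $p<3$, i.e.\ on a large portion of the Felli--Schneider curve. The ``quartic coercivity'' from \cite{Fra1} is not a statement about this form on $X_{\lambda_0}$ alone; it is the inequality $E_0+F>0$ of Lemma~\ref{le4}, where $E_0$ encodes a minimization over the \emph{non-degenerate} directions $\eta\in\hat Y$ and $F$ contains a positive piece coming from the second-order correction of the $U_{\lambda_0}$-coefficient. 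To make $E_0+F$ appear from testing $H(u)$, the paper (in Emden--Fowler variables) tests not against $\rho_X$ but against the specific combination $\mu_k\bigl(V_0^{p/2}\theta_n+2\eta_k\bigr)$, and simultaneously refines the estimate on the scalar coefficient $\alpha_k$ to second order (cf.\ \eqref{ff8}) so that the extra positive term in $F$ emerges. Remark~\ref{re1} shows that any other coefficient in front of $\eta_k$ gives $\tfrac{(\lambda+2)^2}{4\lambda}E_0+2F$ and fails for $n\ge3$, $p$ near $2^*$; your choice corresponds to $\lambda=0$ and cannot work across the whole curve. The proof also needs the domain splitting $\{|\eta_k|<V_0^{p/2}\}$ versus $\{|\eta_k|\ge V_0^{p/2}\}$ to justify the fourth-order Taylor expansion, since $p$ may be below $4$; your pointwise expansion $H(u)=L_{\lambda_0}\rho+N_2(\rho)+R(\rho)$ with $|R|\lesssim U^{p-4}|\rho|^3$ is only valid where $|\rho|\le\tfrac12 U_{\lambda_0}$.

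Your optimality construction is essentially the paper's (the paper's extra $-C_0\mu_k^2V_0$ term merely enforces an additional orthogonality on $\eta$), but you still need the pointwise bound $|\psi|\lesssim U_1$ to justify the $O(k^{-3})$ Taylor remainder; the Fredholm alternative gives $\psi\in D_a^{1,2}(\R^n)$, not pointwise control. The paper obtains this from the explicit power-series representation in Lemma~\ref{le3}.
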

\begin{theorem}\label{thm4}
    Assume $n\geq 2$, $\nu\geq2$, $a<0$, $b=b_{\mathrm{FS}}(a)$ and $p=\frac{2n}{n-2+2(b-a)}$. Then there exist two positive constants $\delta=\delta(a,\nu,n)$ and $C=C(a,\nu,n)$ such that, for any function $u\in D_a^{1,2}(\R^n)$ satisfying
    \begin{align}
        \inf_{\substack{s_i\in\R_+\\
    1\leq i\leq\nu}}\norm*{u-\sum_{i=1}^\nu U_{s_i}}_{D_a^{1,2}(\R^n)}\leq \delta\nonumber
    \end{align}
    and the infimum is attained by $U_{\lambda_i},1\leq i\leq \nu$ with 
    \begin{align}
        \min\left\{\frac{\lambda_i}{\lambda_j},\frac{\lambda_j}{\lambda_i}\right\}\leq \delta,\quad \forall\;i\neq j,\nonumber
    \end{align}
    the estimate \eqref{main-re3} holds provided \eqref{qaz2} holds.
\end{theorem}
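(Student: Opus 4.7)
The plan is to adapt Wei and Wu's multi-bubble scheme to the Felli--Schneider regime by projecting out the non-trivial kernel $X = \oplus_{i=1}^\nu X_{\lambda_i}$ and using hypothesis \eqref{qaz2} to absorb the degenerate direction as a super-linear error. I would first fix the decomposition
\[
u = \sigma + \rho_X + \rho_\perp, \qquad \sigma := \sum_{i=1}^\nu U_{\lambda_i}, \qquad \rho_X := \Pi_X(u), \qquad \rho_\perp := \Pi_{X}^\perp(u) - \sigma,
\]
noting that $\rho := u - \sigma = \rho_X + \rho_\perp$ up to a negligible piece $\Pi_X(\sigma)$, which is small by near-orthogonality of $X_{\lambda_i}$ to $U_{\lambda_j}$ for $i\ne j$. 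Because the $\lambda_i$ realise the infimum, $\rho$ is $D_a^{1,2}(\R^n)$-orthogonal to each $\partial_\lambda U_{\lambda_j}$; combined with the separation $\lambda_i/\lambda_j \le \delta$, this makes $\rho_\perp$ nearly orthogonal to $\mathrm{Span}\{U_{\lambda_j},\partial_\lambda U_{\lambda_j}\}_{1\le j\le\nu}\oplus X$.

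The second step is to establish a quantitative spectral gap $-\langle H'(\sigma)v,v\rangle \ge c\norm*{v}_{D_a^{1,2}(\R^n)}^2$ on that near-complement. At a single bubble, $H'(U_{\lambda_j})$ has exactly one positive direction $U_{\lambda_j}$ together with the finite-dimensional kernel $\mathrm{Span}\{\partial_\lambda U_{\lambda_j}\}\oplus X_{\lambda_j}$; outside these it is uniformly negative, and inter-bubble cross terms of size $O(\delta)$ cannot destroy the gap for $\sigma$.

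The third step is the energy test: write $H(u) = H(\sigma) + H'(\sigma)\rho + \mathcal{N}(\rho)$ and pair with $\rho_\perp$. Coercivity then yields
\[
c\norm*{\rho_\perp}_{D_a^{1,2}(\R^n)}^2 \le \bigl|\langle H(u),\rho_\perp\rangle\bigr| + \bigl|\langle H(\sigma),\rho_\perp\rangle\bigr| + \bigl|\langle H'(\sigma)\rho_X,\rho_\perp\rangle\bigr| + \bigl|\langle \mathcal{N}(\rho_X+\rho_\perp),\rho_\perp\rangle\bigr|.
\]
The first term produces $\norm*{H(u)}_{D_a^{-1,2}(\R^n)}\norm*{\rho_\perp}_{D_a^{1,2}(\R^n)}$. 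For the middle two I would use that $\oplus_j H'(U_{\lambda_j})$ annihilates $\rho_X$ and that $H'(\sigma)-\oplus_j H'(U_{\lambda_j})$ is controlled by the inter-bubble interaction scale $Q := \norm*{H(\sigma)}_{D_a^{-1,2}(\R^n)}$; testing $H(u)$ against each $\partial_\lambda U_{\lambda_j}$ and inverting the interaction matrix as in Wei--Wu and Deng--Sun--Wei gives $Q \lesssim F_1\bigl(\norm*{H(u)}_{D_a^{-1,2}(\R^n)}\bigr)$. The nonlinear remainder $\mathcal{N}(\rho)$ splits into a purely $\rho_\perp$ piece absorbed by smallness of $\delta$, mixed $\rho_X$--$\rho_\perp$ cross terms handled by Young's inequality, and a pure $\rho_X$ piece of size $F_1\bigl(\norm*{\rho_X}_{D_a^{1,2}(\R^n)}^2\bigr)\norm*{\rho_\perp}_{D_a^{1,2}(\R^n)}$; the appearance of $F_1$ here reflects the shape of the nonlinearity (quadratic for $p>3$, quadratic-with-log for $p=3$, $(p-1)$-H\"older for $2<p<3$).

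Assembling everything, the estimate reads
\[
c\norm*{\rho_\perp}_{D_a^{1,2}(\R^n)}^2 \lesssim F_1\bigl(\norm*{H(u)}_{D_a^{-1,2}(\R^n)}\bigr)\norm*{\rho_\perp}_{D_a^{1,2}(\R^n)} + F_1\bigl(\norm*{\rho_X}_{D_a^{1,2}(\R^n)}^2\bigr)\norm*{\rho_\perp}_{D_a^{1,2}(\R^n)},
\]
and hypothesis \eqref{qaz2} is exactly what is needed to absorb the second term into the left-hand side, leaving \eqref{main-re3}. The hard part will be the sub-cubic regime $2<p<3$, where $F_1(x)=x^{(p-1)/2}$ and the nonlinearity is only H\"older: Taylor expansions, the identification of $Q$ with the interaction output, and the inversion of the interaction matrix must all be carried out in weighted $L^q$ spaces with particular care, and ensuring that the test against $\partial_\lambda U_{\lambda_j}$ still decouples from the extra kernel $X$ (up to acceptable errors) is the technical step I expect to require the most work.
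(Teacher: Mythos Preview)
The step ``testing $H(u)$ against each $\partial_\lambda U_{\lambda_j}$ \ldots gives $Q\lesssim F_1(\norm*{H(u)}_{D_a^{-1,2}(\R^n)})$'' is where the argument breaks. That test produces (cf.\ the paper's \eqref{lower bd}--\eqref{est2}) a bound of the form $Q\lesssim \norm*{H(u)}_{D_a^{-1,2}}+\norm*{\rho}^2_{D_a^{1,2}}+(\text{cross term})$, and $\norm*{\rho}^2\ge\norm*{\rho_X}^2$ cannot be absorbed: in the degenerate regime the interaction scale is genuinely coupled to the kernel magnitude. In Wei--Wu and Deng--Sun--Wei the loop closes because the full $\norm*{\rho}$ is already controlled by coercivity before one reaches this step; on the Felli--Schneider curve that control is unavailable for $\rho_X$, and this is the heart of the matter. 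The paper never decouples $Q$ from $\rho_X$. Instead it derives the pair \eqref{hhh1}--\eqref{hhh2} expressing $\norm*{\psi_2}$ and $Q$ jointly in terms of $\norm*{H_1(v)}$ and $\norm*{\psi_1}$, and concludes by a dichotomy: either $\norm*{H_1(v)}$ dominates $\norm*{\psi_1}^2$, whence $Q\lesssim\norm*{H_1(v)}$ and \eqref{main-re6} follows, or $\norm*{\psi_1}^2$ dominates, whence $\norm*{\psi_2}\lesssim F_1(\norm*{\psi_1}^2)$, contradicting \eqref{qaz2}. Invoking \eqref{qaz2} only as a last-step absorption after already asserting $Q\lesssim F_1(\norm*{H(u)})$ bypasses this coupling.

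Structurally the paper also proceeds differently. Rather than a direct energy test against $\rho_\perp$, it first performs a Lyapunov--Schmidt reduction with the degenerate kernel $Y$ projected out alongside the scaling directions (see \eqref{eq3}--\eqref{fin4}), producing a corrector $\rho_0$ with weighted-$L^\infty$ control and $\norm*{\rho_0}_{H^1}\lesssim F_1(Q)$; it then decomposes $\rho-\rho_0$ into bubble, scaling, degenerate and remainder pieces, bounding the bubble coefficients $\beta_i$ by a separate test against $V_{t_i}$ (\eqref{fin7}). Your assertion that $\rho_\perp$ is ``nearly orthogonal'' to $U_{\lambda_j}$ is unjustified---nothing in the minimising condition forces it---so the coercivity step has the same gap. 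Finally, the direct bound $\lvert\langle H(\sigma),\rho_\perp\rangle\rvert\le\norm*{H(\sigma)}_{D_a^{-1,2}}\norm*{\rho_\perp}$ you rely on would need $\norm*{H(\sigma)}_{D_a^{-1,2}}\lesssim F_1(Q)$, which is not addressed and is delicate at $p=3$; the paper's route via $\int W_1W_3\approx F_1(Q)^2$ sidesteps this by exploiting the pointwise structure of $\rho_0$ rather than a dual-norm estimate.
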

\begin{remark}
    Note that in these local versions, we do not need $u$ to be positive. Instead, we assume $u$ is close to a sum of Talenti bubbles.
\end{remark}
\begin{remark}
    By further exploiting the proof of Theorem \ref{thm3}, we can obtain the following asymptotic estimate: let $\{\varphi_k\}_k$ be a sequence in $D_a^{1,2}(\R^n)$ such that $\varphi_k\notin \mathcal{M}_{a,b,n}$ and $\inf\limits_{\lambda>0}\norm*{\varphi_k- U_{\lambda}}_{D_a^{1,2}(\R^n)}\rightarrow 0$, then
    \begin{equation}\label{asy}
        \liminf_{k\rightarrow+\infty}\frac{\norm*{H(\varphi_k)}_{D_a^{-1,2}(\R^n)}}{  \inf\limits_{\lambda>0}\norm*{\varphi_k- U_{\lambda}}_{D_a^{1,2}(\R^n)}^3}\geq R(p,n).
    \end{equation}
    Here $R(p,n)$ is a complicated number defined in \eqref{rrr}. Generally, this number is not optimal. We will elaborate on \eqref{asy} in Remark \ref{re3}.
\end{remark}
\vspace{10pt}
Let us briefly address our methods in deriving Theorem \ref{thm3} and Theorem \ref{thm4}.

To establish Theorem \ref{thm3}, we adapt the ideas from \cite{Fra1,Wei}. The estimate \eqref{main-re2} can be obtained by refining the arguments of \cite[Proposition 5.1]{Wei}. As for the estimate \eqref{main-re1}, we first test the equation \eqref{eq} with a suitable function and then take advantage of a expansion technique presented in \cite{Fra,Fra1} to deal with nonlinear terms. Finally we can obtain a unexplicit lower bound of $\norm*{H(u)}_{D_a^{-1,2}(\R^n)}$, which is a quadratic form of $u$. The positivity of this lower bound has already been investigated in \cite{Fra1} and thus \eqref{main-re1} follows.

It is worth mentioning the following three points. The first point is that, unlike the non-degenerate case (\cite[Proposition 5.1]{Wei}), the most canonical choice of the test function does not allow us to estimate the $D_a^{-1,2}(\R^n)$-norm of the equation \eqref{eq} well. The test function we choose in this paper is in some sense optimal. We explain this point further in Remark \ref{re1}. The second point is that we need to expand the equation \eqref{eq} to the fourth order (note that in the non-degenerate case, a second-order expansion is sufficient). Since in general $p<4$, we cannot expand it directly. Here we employ a high order expansion technique from \cite{Fra,Fra1} to overcome this problem. The third point is that, the construction of the functions $\{w_k\}_k$ in Theorem \ref{thm1} is technical. In the non-degenerate case, the optimality can be easily proved. However, in our case, many natural choices of $\{w_k\}_k$ can only yield a quadratic estimate. To achieve the desired cubic estimate, we use the solutions of some special Schr\"odinger equations to perturb certain natural choice of $\{w_k\}_k$. A further explanation of this point can be found in Remark \ref{re2}.
\vspace{10pt}

To establish Theorem \ref{thm4}, we first modify the finite-dimensional reduction method employed in \cite{Wei} to find an approximation function $\rho_0$ of $\rho:=u-\sum\limits_{i=1}^\nu U_{\lambda_i}$. $\rho_0$ has a good pointwise estimate and its $D_a^{1,2}(\R^n)$-norm can be well controlled. Next by applying careful estimates, we manage to work out the relations between $\norm*{\Pi_X(\rho)}_{D_a^{1,2}(\R^n)}$, $\norm*{\Pi_X^\perp(\rho)-\rho_0}_{D_a^{1,2}(\R^n)}$ and $\norm*{H(u)}_{D_a^{-1,2}(\R^n)}$. Under the restriction \eqref{qaz2}, $\norm*{\Pi_X^\perp(\rho)-\rho_0}_{D_a^{1,2}(\R^n)}$ can be controlled by $\norm*{H(u)}_{D_a^{-1,2}(\R^n)}$. Finally, combining above two steps gives the estimate \eqref{main-re3}. 
\vskip0.1in

Our paper is organized as follows. In Section \ref{sec2}, we reformulate our main results on the cylinder $\C=\R\times \S^{n-1}$ and provide some technical lemmas. Concretely speaking, we employ the Emden-Fowler transformation to transform the Caffarelli-Kohn-Nirenberg inequality \eqref{ckn} and the equation \eqref{eq} into certain Gagliardo-Nirenberg interpolation inequality and its corresponding Euler-Lagrange equation on $\C$, respectively. Based on this transformation, Theorem \ref{thm3} and Theorem \ref{thm4} are equivalent to Theorem \ref{thm5} and Theorem \ref{thm6}, respectively. We also list several useful lemmas that play an important role in our study on $\C$. In Section \ref{sec3}, we are devoted to the proofs of Theorem \ref{thm1} and Theorem \ref{thm5}. We also explain some key points of our proofs. In Section \ref{sec4}, we deal with Theorem \ref{thm2} and Theorem \ref{thm6}.

\vskip0.1in

\noindent$\textbf{Notations.}$ Throughout this paper, for simplicity, we consistently use $C$ and $c$ to denote positive quantities depending only on the parameters $n,a,b,p$ and possibly $\nu$. The values of $C$ and $c$ may vary from line to line. Generally, $C$ is used to denote large numbers, while $c$ represents small numbers. For any two quantities $a_1$ and $a_2$, we shall write $a_1\lesssim a_2$ (resp. $a_1\gtrsim a_2$) if $a_1\leq Ca_2$ (resp. $a_1\geq ca_2)$. Also we say $a_1\approx a_2$ if $a_1\lesssim a_2$ and $a_1\gtrsim a_2$. Given any quantity $\varepsilon$, we say $a_1\lesssim_\varepsilon a_2$ if $a_1\lesssim C(\varepsilon)a_2$, where $C(\varepsilon)$ is a positive number depending only on $\varepsilon$. We similarly define $\gtrsim_\varepsilon$ and $\approx_\varepsilon$. For any two sequences $\{A_k\}_k$ and $\{B_k\}_k$ of numbers, we write $A_k=\O(B_k)$ if $|A_k|\leq C|B_k|,\;\forall k\in\N_+$. When referring to a positive variable $A$, we will use the notation $o_A(1)$ to indicate any quantity that tends towards zero as $A$ approaches zero. For any two numbers $x$ and $y$, we define two new numbers
\begin{equation*}
    \chi({x>y}):=\begin{cases}
        1\quad\text{if }x>y\\
        0\quad\text{if }x\leq y,
    \end{cases}\quad\chi({x\geq y}):=\begin{cases}
        1\quad\text{if }x\geq y\\
        0\quad\text{if }x< y.
    \end{cases}
\end{equation*}

\noindent$\textbf{Acknowledgement.}$ The authors are very grateful to Rupert L. Frank and Jonas W. Peteranderl for helpful discussions that improved this paper. We also thank the anonymous referees for their careful reading of the manuscript and for valuable comments that significantly improved the paper.

\section{A reformulation and some technical lemmas}\label{sec2}
\subsection{A reformulation}
Consider the following so-called Emden-Fowler transformation
\begin{equation*}
    u(r,\theta)=r^{\frac{2+2a-n}{2}}v(s,\theta),
\end{equation*}
where $r=|x|$, $s=-\ln{r}$ and $\theta=\frac{x}{r}\in\S^{n-1}$. As was observed in \cite{Cat}, with this transformation, the Caffarelli-Kohn-Nirenberg inequality \eqref{ckn} can be rewritten as
\begin{equation}\label{ckn1}
    S(a,b,n)\norm*{v}^2_{L^p(\C)}\leq \norm*{\partial_s v}^2_{L^2(\C)}+\norm*{\nabla_\theta v}^2_{L^2(\C)}+\Lambda\norm*{v}^2_{L^2(\C)}
\end{equation}
and the critical Hardy-H\'enon equation \eqref{eq} is equivalent to
\begin{equation}\label{eq1}
    H_1(v):=\partial_s^2 v+\Delta_\theta v-\Lambda v+|v|^{p-2}v=0.
\end{equation}
Here $\Lambda:=\left(\frac{n-2-2a}{2}\right)^2$ and $\nabla_\theta,\Delta_\theta$ denote the gradient and the Laplace-Beltrami operator on $\S^{n-1}$, respectively. This transformation provides an isometry between the spaces $D_a^{1,2}(\R^n)$ and $H^1(\C)$. Here we equip $H^1(\C)$ with the following norm
\begin{equation*}
    \norm*{v}_{H^1(\C)}=\left(\norm*{\partial_s v}^2_{L^2(\C)}+\norm*{\nabla_\theta v}^2_{L^2(\C)}+\Lambda\norm*{v}^2_{L^2(\C)}\right)^{\frac{1}{2}}.
\end{equation*}
Note that in this case, the Felli-Schneider curve can be parametrized by $n\geq2$, $2<p<2^*$ and $\Lambda=\frac{4(n-1)}{p^2-4}$. Along the Felli-Schneider curve, the set of positive solutions to the equation \eqref{eq1} is given by
\begin{equation}\label{sol1}
    \mathcal{M}_{p,\Lambda,n}=\left\{V_t(s)\;\Big|\;V_t(s):=\beta(\cosh{(\alpha(s-t))})^{-\frac{2}{p-2}},t\in\R\right\},
\end{equation}
where $\alpha:=\frac{p-2}{2}\sqrt{\Lambda}$ and $\beta:=\left(\frac{p\Lambda}{2}\right)^{\frac{1}{p-2}}$. The elements in $\M_{p,\Lambda,n}$ are also called \textbf{Talenti bubbles}. Similar to the original setting on $\R^n$, for any $t\in \R$, we define $Y_t=\text{Span}\{V_t^{\frac{p}{2}}\theta_i,1\leq i\leq n\}$. From Lemma \ref{le2}, $Y_t\subset H^1(\C)$ consists of nontrivial solutions to the linearized equation of \eqref{eq1} near the Talenti bubble $V_t$. We also define $\Pi_{Y_t}$ and $\Pi_{Y_t}^\perp$ to be the projections onto $Y_t$ and the orthogonal complement of $Y_t$ in $H^1(\C)$, respectively.

Now we can state the following exact counterparts of Theorem \ref{thm3} and Theorem \ref{thm4} on $\C$.
\begin{theorem}\label{thm5}
    Assume $n\geq2$, $2<p<2^*$ and $\Lambda=\frac{4(n-1)}{p^2-4}$. Then there exist two positive constants $\delta=\delta(p,n)$ and $C=C(p,n)$ such that, for any function $v\in H^1(\C)$ satisfying
    \begin{equation}
        \norm*{v- V_0}_{H^1(\C)}\leq \delta,\nonumber
    \end{equation}
    it holds that
    \begin{equation}\label{main-re4}
        \inf_{t\in\R}\norm*{v- V_t}_{H^1(\C)}^3\leq C\norm*{H_1(v)}_{H^{-1}(\C)}.
    \end{equation}
    The above cubic estimate is sharp. Moreover, if the infimum in \eqref{main-re4} is attained by $V_{t_0}$, then there exists a positive constant $C_1=C_1(p,n)$ such that
    \begin{align}\label{main-re5}
        \norm*{\Pi_{Y_{t_0}}^\perp(v)- V_{t_0}}_{H^1(\C)}\leq C\norm*{H_1(v)}_{H^{-1}(\C)}
    \end{align}
    provided
    \begin{align}
        \norm*{\Pi_{Y_{t_0}}^\perp(v)- V_{t_0}}_{H^1(\C)}\geq C_1F_2\left(\norm*{\Pi_{Y_{t_0}}(v)}_{H^1(\C)}\right).\nonumber
    \end{align}
\end{theorem}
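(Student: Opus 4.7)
The plan is to first reduce by translation invariance: since $v$ is $H^1$-close to $V_0$, the function $t \mapsto \norm{v - V_t}_{H^1(\C)}^2$ attains its minimum at some small $t_0$, so after replacing $V_0$ by $V_{t_0}$ I may assume the infimum is attained at $t = 0$ and $v - V_0$ is $H^1$-orthogonal to $\partial_t V_0$. Decompose $v = V_0 + \phi + \psi$ with $\phi := \Pi_{Y_0}(v - V_0) \in Y_0$ and $\psi := \Pi_{Y_0}^\perp(v - V_0)$, so that $\psi$ is perpendicular to the full kernel $\text{Span}\{\partial_t V_0\} \oplus Y_0$ of the linearization $L\eta := \partial_s^2 \eta + \Delta_\theta \eta - \Lambda \eta + (p-1)V_0^{p-2}\eta$; by Lemma \ref{le2}, $L$ has a positive spectral gap on this complement. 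Write $H_1(v) = L\psi + R$, where $R := |V_0 + \phi + \psi|^{p-2}(V_0 + \phi + \psi) - V_0^{p-1} - (p-1)V_0^{p-2}(\phi + \psi)$ is the pure nonlinear remainder.

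For the cubic estimate \eqref{main-re4}, the target is $\norm{\phi}_{H^1(\C)}^3 + \norm{\psi}_{H^1(\C)} \lesssim \norm{H_1(v)}_{H^{-1}(\C)}$. Pairing $H_1(v)$ with $\psi$ and using the spectral gap yields $c \norm{\psi}^2 \leq \langle H_1(v), \psi\rangle + |\langle R, \psi\rangle|$, with the cross-error bounded by $O(\norm{\phi}^{\min(2,p-1)}\norm{\psi} + \norm{\psi}^3)$. Pairing instead with $\phi \in Y_0$ eliminates $L\psi$ and leaves $\langle R, \phi\rangle$, which I expand using the high-order controlled expansion of $|v|^{p-2}v$ from \cite{Fra,Fra1} (direct Taylor fails when $p - 1 < 2$, and must be replaced by a pointwise dichotomy between the regions $|\phi + \psi| \ll V_0$ and $|\phi + \psi| \gtrsim V_0$). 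The leading cubic-in-$\phi$ contribution $\tfrac{(p-1)(p-2)}{2}\int V_0^{p-3}\phi^3$ vanishes because each generator $V_0^{p/2}\theta_i$ of $Y_0$ is odd in $\theta_i$ and $\int_{\S^{n-1}} \theta_i \theta_j \theta_k \, d\sigma = 0$. As noted in \cite{Wei} and made precise in \cite{Fra1}, the naive test function $\phi$ alone is insufficient to isolate the full fourth-order form: one must correct it to $T = \phi + \xi$, where $\xi \perp \ker L$ solves $L\xi = -\tfrac{1}{2}(p-1)(p-2)\Pi_{Y_0}^\perp(V_0^{p-3}\phi^2)$. With this test function, $\langle H_1(v), T\rangle$ reproduces exactly the quartic form whose strict positivity on $Y_0$ along the Felli-Schneider curve is established in \cite{Fra1}, yielding $\norm{\phi}^4 \lesssim \langle H_1(v), T\rangle + (\text{absorbable errors})$, hence $\norm{\phi}^3 \lesssim \norm{H_1(v)}_{H^{-1}(\C)}$. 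Combining this with the $\psi$-pairing and using the smallness assumption so that the cubic term dominates proves \eqref{main-re4}.

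The refined bound \eqref{main-re5} is immediate from the $\psi$-pairing once the hypothesis $\norm{\psi}_{H^1(\C)} \geq C_1 F_2(\norm{\phi}_{H^1(\C)})$ is imposed: since $F_2(x) = x^{\min(2,p-1)}$, the cross-error $\norm{\phi}^{\min(2,p-1)}\norm{\psi}$ is dominated by $\norm{\psi}^2 / C_1$ and absorbed into the coercive term for $C_1$ large, giving $\norm{\psi} \lesssim \norm{H_1(v)}_{H^{-1}(\C)}$ directly. Sharpness of \eqref{main-re4} is achieved by $w_k := V_0 + \eps_k \phi_0 + \eps_k^2 \xi_k$, where $\phi_0 \in Y_0$ is fixed and $\xi_k$ solves the Poisson equation above with $\phi$ replaced by $\phi_0$; a direct expansion gives $\norm{w_k - V_0}_{H^1(\C)} \approx \eps_k$ while $\norm{H_1(w_k)}_{H^{-1}(\C)} = O(\eps_k^3)$, so the ratio $\norm{H_1(w_k)}_{H^{-1}(\C)} / \inf_t \norm{w_k - V_t}_{H^1(\C)}^3$ stays bounded. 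The chief obstacle I anticipate is making the high-order expansion of $|v|^{p-2}v$ rigorous in the regime $p < 3$ when $\phi$ and $\psi$ coexist at comparable scales: the Taylor remainder is not integrable pointwise, so the dichotomy of \cite{Fra,Fra1} must be adapted to mixed perturbations of no fixed sign, and the resulting cross-terms between $\phi$ and $\psi$ in the fourth-order form must be bookkept precisely enough that the quartic positivity on $Y_0$ survives while the $\psi$-errors remain subordinate to $c\norm{\psi}^2$.
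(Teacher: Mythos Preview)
Your plan has the right architecture but contains a genuine gap in the coercivity step. You assert that $-L$ has a positive spectral gap on the orthogonal complement of $\ker L=\mathrm{Span}\{\partial_s V_0\}\oplus Y_0$. This is false: $V_0$ itself lies in that complement (it spans the first eigenspace, $\gamma_1=1<p-1$), and $-\langle LV_0,V_0\rangle_{L^2}=(2-p)\int_{\C}V_0^p<0$. Thus pairing $H_1(v)$ with your $\psi$ does not yield $c\norm{\psi}_{H^1}^2$. The paper repairs this by minimizing $\norm{v-\alpha V_t}$ over both $\alpha$ and $t$, which forces the remainder to be $H^1$-orthogonal to $V_0$ as well; the scalar $\alpha_k-1$ is then controlled separately by testing against $V_0$ (see \eqref{ff7}--\eqref{ff8}). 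This $V_0$-coupling is precisely what generates the $F$-term in the decisive quartic $E_0+F$ of Lemma~\ref{le4}. Without isolating the $V_0$-component you cannot recover $F$, and $E_0$ alone is not positive in general (Lemma~\ref{le4} records $E_0/F\to-1$ as $p\to2^*$).

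Your corrected test function $T=\phi+\xi$ also diverges from the paper's in a way that matters. With your coefficient $-\tfrac12$, the cross-term $(p-1)(p-2)\int V_0^{p-3}\phi^2\psi$ coming from $\langle R,\phi\rangle$ is only half-cancelled by $\langle L\psi,\xi\rangle$, so the quartic is not cleanly isolated. The paper instead tests with $\mu_k\bigl(V_0^{p/2}\theta_n+2\eta_k\bigr)$, i.e.\ the degenerate direction plus \emph{twice the actual orthogonal remainder} rather than a predetermined corrector, and Remark~\ref{re1} explains why the factor $2$ is essential: any other $\lambda$ produces the lower bound $\tfrac{(\lambda+2)^2}{4\lambda}E_0+2F$, which can fail to be positive for $p$ near $2^*$ when $n\ge3$. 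Your sharpness ansatz $w_k=V_0+\eps_k\phi_0+\eps_k^2\xi_k$ is essentially equivalent to the paper's \eqref{cons} once one observes that the required $\xi_k$ necessarily has a $V_0$-component (this is the $(1-C_0\mu_k^2)$ scaling in the paper), but you still need the pointwise bound $|\xi_k|\lesssim V_0$ to justify the Taylor expansion; the paper obtains this via the explicit solution in Lemma~\ref{le3}.
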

\begin{theorem}\label{thm6}
    Assume $n\geq2$, $\nu\geq2$, $2<p<2^*$ and $\Lambda=\frac{4(n-1)}{p^2-4}$. Then there exist two positive constants $\delta=\delta(p,\nu,n)$ and $C=C(p,\nu,n)$ such that, for any function $v\in H^1(\C)$ satisfying
    \begin{align}
        \inf_{\substack{s_i\in\R\\
    1\leq i\leq\nu}}\norm*{v-\sum_{i=1}^\nu V_{s_i}}_{H^1(\C)}\leq \delta\nonumber
    \end{align}
    and the infimum is attained by $V_{t_i},1\leq i\leq\nu$ with
    \begin{align}
        |t_i-t_j|\geq \delta^{-1},\quad\forall\;i\neq j,\nonumber
    \end{align}
    the following estimate holds
    \begin{align}\label{main-re6}
        \norm*{\Pi_{Y}^\perp(v)-\sum_{i=1}^\nu V_{t_i}}_{H^1(\C)}\leq CF_1\left(\norm*{H_1(v)}_{H^{-1}(\C)}\right)
    \end{align}
    provided
    \begin{align}
        \norm*{\Pi_{Y}^\perp(v)-\sum_{i=1}^\nu V_{t_i}}_{H^1(\C)}\geq CF_1\left(\norm*{\Pi_{Y}(v)}^2_{H^{1}(\C)}\right).\nonumber
    \end{align}
    Here $Y:=\mathop{\oplus}\limits_{i=1}^\nu Y_{t_i}$.
\end{theorem}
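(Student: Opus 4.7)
The plan is to adapt the finite-dimensional reduction method used by Wei--Wu in the non-degenerate setting, modifying it so as to project out not only the scaling directions $\partial_t V_{t_i}$ but also the $\nu n$-dimensional degenerate subspace $Y = \oplus_{i=1}^\nu Y_{t_i}$. Write $\sigma := \sum_{i=1}^\nu V_{t_i}$ and decompose $v = \Pi_Y(v) + \sigma + \rho$ with $\rho := \Pi_Y^\perp(v) - \sigma$. By the minimality hypothesis, the orthogonality $\langle \rho, \partial_t V_{t_i}\rangle_{H^1(\C)} = 0$ is automatic for each $i$. What must be established is the bound $\norm*{\rho}_{H^1(\C)} \lesssim F_1(\norm*{H_1(v)}_{H^{-1}(\C)})$.

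The first step is to construct a corrected approximate profile $u_0 = \sigma + \psi$, where $\psi$ is obtained via Lyapunov--Schmidt in the orthogonal complement of $\mathrm{span}\{\partial_t V_{t_i}\} \oplus Y$, so that $H_1(u_0)$ lies in $\mathrm{span}\{\partial_t V_{t_i}\}$. Since the bubbles decay exponentially on $\C$, the well-separation $|t_i - t_j| \geq \delta^{-1}$ makes the interaction terms arising from $|\sigma|^{p-2}\sigma - \sum_i V_{t_i}^{p-1}$ as small as desired, and $\psi$ admits both an $H^1$-estimate and a good pointwise control outside the bubble cores. These quantitative interaction bounds are the degenerate analogues of the constructions in Section~5 of \cite{Wei}, producing $\norm*{\psi}_{H^1(\C)} \lesssim F_1(\eta)$, where $\eta$ is the maximum bubble-interaction quantity.

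Next, I would test $H_1(v)$ against $\Pi_Y^\perp(v) - u_0 = \rho - \psi$. Expanding $|v|^{p-2}v$ around $\sigma$ and using the equations satisfied by each $V_{t_i}$, the cross terms involving $\partial_t V_{t_i}$ and the elements of $Y$ vanish by orthogonality, and the remaining quadratic form is the linearised operator at $\sigma$ evaluated on $\rho - \psi$. Since $\rho - \psi$ sits in the $H^1$-orthogonal complement of $Y \oplus \mathrm{span}\{\partial_t V_{t_i}\}$, a separation-based perturbation of the single-bubble spectral gap supplies the coercivity
\begin{equation*}
  \norm*{\rho - \psi}_{H^1(\C)}^2 \lesssim \norm*{H_1(v)}_{H^{-1}(\C)}\,\norm*{\rho - \psi}_{H^1(\C)} + \mathcal{N},
\end{equation*}
where $\mathcal{N}$ lumps together the genuinely nonlinear remainder and the cross terms with $\Pi_Y(v)$. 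Expanding $|v|^{p-2}v$ to second order when $p \geq 3$, or by the fractional expansion device of Frank--Peteranderl \cite{Fra,Fra1} when $2 < p < 3$, yields $\mathcal{N} \lesssim F_1(\norm*{\Pi_Y(v)}^2_{H^1(\C)})\, \norm*{\rho - \psi}_{H^1(\C)}$ plus higher-order negligible terms. Invoking the hypothesis $\norm*{\rho}_{H^1(\C)} \geq C F_1(\norm*{\Pi_Y(v)}^2_{H^1(\C)})$, which transfers to $\rho - \psi$ after absorbing the small $\psi$, then lets us swallow this contribution into the left-hand side, producing $\norm*{\rho - \psi}_{H^1(\C)} \lesssim F_1(\norm*{H_1(v)}_{H^{-1}(\C)})$; the triangle inequality together with the construction estimate on $\psi$ finishes the proof.

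The principal obstacle is the sub-cubic regime $2 < p < 3$, where the pointwise Taylor expansion of $|v|^{p-2}v$ around $\sigma$ is merely H\"older-continuous; handling this requires the high-order fractional expansion technique mentioned in the discussion of Theorem~\ref{thm1}, and it is precisely this device that produces the exponent $(p-1)/2$ inside $F_1$ and the quadratic argument $\norm*{\Pi_Y(v)}^2_{H^1(\C)}$ in the threshold. A second delicate point is establishing the coercivity of the linearised operator at $\sigma$ uniformly in the bubble separation: because $|t_i - t_j|$ may be arbitrarily large but is only required to exceed $\delta^{-1}$, the spectral gap for the direct-sum operator must be shown stable under the off-diagonal interactions, which is where the well-separation assumption enters crucially.
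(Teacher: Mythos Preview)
Your outline captures the finite-dimensional reduction and the coercivity step correctly, but it is missing one essential ingredient: a lower bound relating the bubble-interaction strength $\eta$ (what the paper calls $Q=e^{-\sqrt{\Lambda}R}$) to $\norm*{H_1(v)}_{H^{-1}(\C)}$ and $\norm*{\Pi_Y(v)}_{H^1(\C)}$. From the construction you get $\norm*{\psi}_{H^1(\C)}\lesssim F_1(\eta)$, and from coercivity plus the non-degeneracy hypothesis you get $\norm*{\rho-\psi}_{H^1(\C)}\lesssim F_1(\norm*{H_1(v)}_{H^{-1}(\C)})$; but the triangle inequality then only yields $\norm*{\rho}_{H^1(\C)}\lesssim F_1(\norm*{H_1(v)}_{H^{-1}(\C)})+F_1(\eta)$, and nothing in your argument bounds $\eta$ by $\norm*{H_1(v)}_{H^{-1}(\C)}$. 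In the paper this is precisely the content of \eqref{lower bd}, obtained by testing $H_1(v)$ not against $\rho-\psi$ but against $\partial_s V_{t_k}$ for a carefully chosen index $k$ (so that the dominant interaction integral $\int_\C V_{t_{k-1}}^{p-1}\partial_s V_{t_k}$ has a definite sign and size $\approx Q$). This is then sharpened to \eqref{we6} after the $\rho_0,\rho_1,\rho_2$ decomposition, and the closing dichotomy converts $F_1(Q)$ into either $F_1(\norm*{H_1(v)}_{H^{-1}(\C)})$ or a contradiction with the hypothesis, according to whether $\norm*{\Pi_Y(v)}_{H^1(\C)}^2$ dominates $\norm*{H_1(v)}_{H^{-1}(\C)}$ or not. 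Without this step your proof cannot close.

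A secondary point: you write that the corrected profile $u_0$ satisfies $H_1(u_0)\in\mathrm{span}\{\partial_t V_{t_i}\}$, but since the reduction is performed orthogonal to $\mathrm{span}\{\partial_t V_{t_i}\}\oplus Y$, Lagrange multipliers must appear along the $Y$-directions as well; compare \eqref{eq3}, where the right-hand side carries both $c_i V_{t_i}^{p-2}\partial_s V_{t_i}$ and $c_{ij} V_{t_i}^{p-2}V_{t_i}^{p/2}\theta_j$. These extra multipliers, of size $\lesssim Q$, feed into the equation \eqref{eq4} for $\rho_1=\rho-\rho_0$ and have to be tracked through the subsequent estimates.
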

In the following two sections we focus on the proofs of the above two new theorems instead of the two original ones. It seems that dealing with the equation \eqref{eq1} on $\C$ can simplify some expressions and computations.

\subsection{Some technical lemmas}
In this subsection we provide some important lemmas. The first lemma contains three elementary inequalities.
\begin{lemma}\label{le1}
    Assume $x,y\in\R$ and $p>2$. Then we have
    \begin{equation*}
        \left||x+y|^{p-2}(x+y)-|x|^{p-2}x-(p-1)|x|^{p-2}y\right|\lesssim \chi(p>3)|x|^{p-3}y^2+|y|^{p-1}
    \end{equation*}
    and
    \begin{align}
        \left||x+y|^{p-2}-|x|^{p-2}\right|\cdot |x|\lesssim\begin{cases}
            |x||y|^{p-2}+|x|^{p-2}|y|,&p\geq 3,\\
            |xy|^{\frac{p-1}{2}},&p<3.
        \end{cases}\nonumber 
    \end{align}
    Furthermore, if $|y|\leq\frac{1}{2}|x|$, we have
    \begin{equation*}
    \begin{aligned}
        \Big|&|x+y|^{p-2}(x+y)-|x|^{p-2}x-(p-1)|x|^{p-2}y-\frac{(p-1)(p-2)}{2}|x|^{p-3}y^2\\
        &-\frac{(p-1)(p-2)(p-3)}{6}|x|^{p-4}y^3\Big|\lesssim |x|^{p-5}y^4.
    \end{aligned}
    \end{equation*}
\end{lemma}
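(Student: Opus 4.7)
The plan is to prove all three inequalities by a reduction to a one-variable problem using homogeneity, followed by a simple case split based on whether $|y|$ is small or large compared to $|x|$. Both sides of each inequality scale by $\lambda^{p-1}$ under $(x,y) \mapsto (\lambda x, \lambda y)$, so after dividing through by $|x|^{p-1}$ (the case $x=0$ being immediate or trivial) it suffices, writing $t = y/x$ and assuming $x=1$, to establish the dimensionless statements
\begin{equation*}
  \bigl||1+t|^{p-2}(1+t)-1-(p-1)t\bigr|\lesssim \chi(p>3)\,t^2+|t|^{p-1},
\end{equation*}
\begin{equation*}
  \bigl||1+t|^{p-2}-1\bigr|\lesssim \chi(p\geq 3)(|t|^{p-2}+|t|)+\chi(p<3)|t|^{(p-1)/2},
\end{equation*}
and the analogous fourth-order statement valid for $|t|\leq 1/2$.

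For the first inequality I would split at $|t|=1/2$. In the region $|t|\leq 1/2$ the function $g(s):=|1+s|^{p-2}(1+s)$ is $C^2$ on $[-1/2,1/2]$ with second derivative bounded (here $|1+s|\geq 1/2$), so Taylor's theorem yields a remainder $\lesssim t^2$. If $p>3$ this is exactly the first term on the right-hand side; if $2<p\leq 3$ then $t^2\leq |t|^{p-1}\cdot|t|^{3-p}\lesssim |t|^{p-1}$ in the range $|t|\leq 1/2$. In the region $|t|>1/2$, each of the three terms $|1+t|^{p-1}$, $1$, and $(p-1)|t|$ is $\lesssim|t|^{p-1}$, so the triangle inequality closes the argument. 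The second inequality is handled identically, with the extra subtlety that for $p<3$ the exponent $p-2$ is in $(0,1)$; in the regime $|t|\leq 1/2$ one uses the $C^1$ estimate $||1+t|^{p-2}-1|\lesssim|t|$ and then bounds $|t|\lesssim |t|^{(p-1)/2}$ for $|t|\leq 1/2$, while in the regime $|t|\geq 1/2$ one uses $|1+t|^{p-2}\lesssim|t|^{p-2}\lesssim |t|^{(p-1)/2}$ since $(p-3)/2<0$.

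The third inequality is actually the cleanest, because the hypothesis $|y|\leq|x|/2$ forces $|x+sy|\geq|x|/2$ for every $s\in[0,1]$. Thus the interval between $x$ and $x+y$ avoids the singularity at the origin, the map $f(\tau):=|\tau|^{p-2}\tau$ is $C^4$ on this segment, and $|f^{(4)}(\tau)|\lesssim|\tau|^{p-5}\lesssim|x|^{p-5}$ uniformly along the segment (treating $p\geq 5$ and $p<5$ by the appropriate monotonicity of $|\tau|^{p-5}$). Taylor's formula with integral remainder,
\begin{equation*}
  f(x+y)-\sum_{k=0}^{3}\frac{f^{(k)}(x)}{k!}y^{k}=\frac{y^{4}}{6}\int_{0}^{1}(1-s)^{3}f^{(4)}(x+sy)\,\de s,
\end{equation*}
therefore delivers the bound $\lesssim|x|^{p-5}y^{4}$ without further case analysis.

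I do not expect any real obstacle: the only point that requires care is the case $2<p<3$ in inequality (2), where $|t|^{p-2}$ fails to be Lipschitz near the origin and one cannot simply invoke the mean value theorem globally, but the two-case split with the elementary algebraic comparisons above resolves this cleanly. Everything else is routine bookkeeping in one variable after the homogeneity reduction.
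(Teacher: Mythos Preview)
Your proposal is correct and follows essentially the same route as the paper. The paper's own justification is minimal---it declares the first and third inequalities ``standard'' and for the second says only that one treats the cases $2|x|\le|y|$ and $2|x|\ge|y|$ separately---so your homogeneity reduction and split at $|t|=1/2$ are exactly the kind of argument being alluded to, just written out in full.
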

The first and third inequalities are standard. The second inequality can be obtained by treating the cases $2|x|\leq |y|$ and $2|x|\geq|y|$ separately. The second lemma concerns with the spectrum of the linearized equation of \eqref{eq1} near certain Talenti bubble. We refer the readers to \cite{Fel,Fra1} for the proof of this Lemma.
\begin{lemma}\label{le2}
    Assume $n\geq2$, $2<p<2^*$ and $\Lambda=\frac{4(n-1)}{p^2-4}$. Denote by $\{\gamma_i\}_i$ the eigenvalues of the equation
    \begin{equation*}
        -\partial_s^2 v-\Delta_\theta v+\Lambda v=\gamma V_0^{p-2}v,\quad v\in H^1(\C),
    \end{equation*}
    then one has
    \begin{equation*}
        \gamma_1=1,\quad\gamma_2=p-1,\quad\gamma_3>p-1.
    \end{equation*}
    Moreover, the first and second eigenspaces are given by $$\mathrm{span}\{V_0\}\;\; \mathrm{and} \;\;\mathrm{span}\left\{\partial_s V_0;V_0^{\frac{p}{2}}\theta_i,1\leq i\leq n\right\},$$ respectively. Here $\theta_i\;(1\leq i\leq n)$ denote the Cartesian coordinates restricted to $\S^{n-1}$.
\end{lemma}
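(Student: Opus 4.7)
The plan is to reduce the eigenvalue equation to a family of one-dimensional problems by spherical harmonic decomposition on the cylinder $\C = \R \times \S^{n-1}$. Any $v \in H^1(\C)$ can be written as $v(s,\theta) = \sum_{k \ge 0} \phi_k(s) Y_k(\theta)$, where $Y_k$ lies in the $k$-th spherical harmonic subspace and $-\Delta_\theta Y_k = k(k+n-2) Y_k$. Substituting decouples the eigenvalue equation into the sequence of 1D Schr\"odinger-type problems
\[
L_k \phi := -\phi'' + [\Lambda + k(k+n-2)] \phi = \gamma V_0^{p-2} \phi, \qquad \phi \in H^1(\R),
\]
whose $j$-th eigenvalue I denote $\mu_j(k)$. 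By Sturm-Liouville theory the $\mu_j(k)$ are simple, the $j$-th eigenfunction has exactly $j-1$ nodes, and $k \mapsto \mu_1(k)$ is strictly increasing. The spectrum of the full operator is the multiset of $\mu_j(k)$, each appearing with the dimension of the corresponding spherical harmonic space.

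For the $k=0$ sector, $V_0$ satisfies $-V_0'' + \Lambda V_0 = V_0^{p-1}$, so $V_0$ is an eigenfunction of $L_0$ with eigenvalue $1$; differentiating in $s$ shows $\partial_s V_0$ is an eigenfunction with eigenvalue $p-1$. Since $V_0 > 0$ and $\partial_s V_0$ has exactly one zero, these are the ground state and first excited state, giving $\mu_1(0) = 1$, $\mu_2(0) = p-1$, and $\mu_3(0) > p-1$. For the $k=1$ sector I would verify by direct computation that $V_0^{p/2}$ is an eigenfunction of $L_1$ with eigenvalue $p-1$ precisely on the Felli-Schneider curve. Using the first integral $(V_0')^2 = \Lambda V_0^2 - (2/p) V_0^p$ (multiply the ODE by $V_0'$ and integrate) together with $V_0'' = \Lambda V_0 - V_0^{p-1}$, one finds
\[
(V_0^{p/2})'' = \tfrac{p^2}{4}\Lambda V_0^{p/2} - (p-1) V_0^{p-2} V_0^{p/2},
\]
whence $L_1 V_0^{p/2} = \bigl[\Lambda(1 - p^2/4) + (n-1)\bigr] V_0^{p/2} + (p-1) V_0^{p-2} V_0^{p/2}$. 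The bracket vanishes iff $\Lambda = \frac{4(n-1)}{p^2-4}$; on this curve $V_0^{p/2} > 0$ is therefore the ground state of $L_1$, so $\mu_1(1) = p-1$ and $\mu_2(1) > p-1$. Pairing with the $n$ spherical harmonics $\theta_1, \ldots, \theta_n$ contributes the $n$ eigenfunctions $V_0^{p/2}\theta_i$ to the eigenvalue $p-1$.

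For $k \ge 2$, strict monotonicity of the ground state under pointwise increase of the potential gives $\mu_1(k) > \mu_1(1) = p-1$. Combining the three cases, the first eigenvalue $\gamma_1 = 1$ has eigenspace $\mathrm{span}\{V_0\}$; the second eigenvalue $\gamma_2 = p-1$ has eigenspace $\mathrm{span}\{\partial_s V_0\} \oplus \mathrm{span}\{V_0^{p/2}\theta_i : 1 \le i \le n\}$; and $\gamma_3 \ge \min\{\mu_3(0), \mu_2(1), \mu_1(2)\} > p-1$, as claimed. The main obstacle is the algebraic computation in the $k=1$ sector: one must exploit the first integral to collapse $(V_0^{p/2})''$ onto just two monomials in $V_0$, and then observe that the coefficient of $V_0^{p/2}$ vanishes exactly along the Felli-Schneider curve — the algebraic coincidence that produces the new zero mode and drives the degeneracy phenomenon at the heart of this paper.
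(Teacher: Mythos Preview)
Your argument is correct and complete. The paper itself does not prove this lemma but defers to \cite{Fel,Fra1}; the approach you outline --- spherical harmonic decomposition on $\C$, reduction to the one-dimensional Sturm--Liouville problems $L_k$, explicit identification of $V_0$, $\partial_s V_0$ and $V_0^{p/2}$ as eigenfunctions, and the first-integral computation showing $V_0^{p/2}$ is the positive ground state of $L_1$ precisely when $\Lambda=\frac{4(n-1)}{p^2-4}$ --- is exactly the proof found in those references.
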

The degeneracy of \eqref{eq1} (and thus \eqref{eq}) is caused by the nontrivial elements $V_0^{\frac{p}{2}}\theta_i,1\leq i\leq n$. Note that via the Emden-Fowler transformation, $u\in D_a^{1,2}(\R^n)$ is centrally symmetric if and only if $v\in H^1(\C)$ satisfies $v(s,\theta)=v(s,-\theta)$. Obviously for such $v$ one has $\Pi_{Y_t}(v)=0,\forall t\in\R$. Therefore, $\Pi_{X_\lambda}(u)=0,\forall \lambda>0$.
The third lemma concerns the power series solution of a special Schr\"odinger equation.
\begin{lemma}\label{le3}
    Assume $n\geq2$, $2<p<2^*$ and $\Lambda=\frac{4(n-1)}{p^2-4}$. Then there exist $\tau,\eta\in\R$ and $\{A_k\}_k,\{B_k\}_k\subset\R$, depending only on $p$ and $n$, such that the unique $H^1(\R)$-solution $g$ of
    \begin{equation*}
        -\partial_s^2 g+(2n+\Lambda)g-(p-1)V_0^{p-2}g=V_0^{2p-3}
    \end{equation*}
    has the following power series representation
    \begin{equation*}
        g(s)=\tau V_0^{\sqrt{1+\frac{2n}{\Lambda}}}\sum_{k=0}^\infty A_k\cosh^{-2k}(\alpha s)+\eta V_0^{2p-3}\sum_{k=0}^\infty B_k\cosh^{-2k}(\alpha s).
    \end{equation*}
    Here $\alpha:=\frac{p-2}{2}\sqrt{\Lambda}$. Moreover,
    \begin{equation*}
        A_k=\O(k^{-\frac{3}{2}}),\quad B_k=\O(k^{-\frac{3}{2}}).
    \end{equation*}
\end{lemma}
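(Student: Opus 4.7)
The plan is to recast the equation as a Fuchsian ODE in the variable $z := \cosh^{-2}(\alpha s)$ and perform a Frobenius analysis at the regular singular point $z = 0$. First I would establish existence and uniqueness: the Schr\"odinger operator $L := -\partial_s^2 + (2n+\Lambda) - (p-1)V_0^{p-2}$ has minimum potential $(2n+\Lambda) - (p-1)\beta^{p-2}$, attained at $s=0$, and a short computation using $\beta^{p-2} = p\Lambda/2$ and $\Lambda = 4(n-1)/(p^2-4)$ reduces this to $2(n+p+1)/(p+2) > 0$. Hence $L$ is strictly positive on $H^1(\R)$ and invertible; since $V_0^{2p-3}$ is smooth, even, and rapidly decaying, this yields a unique smooth even $g \in H^1(\R)$.

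Next, writing $F(z) := g(s)$ for $z = \cosh^{-2}(\alpha s) \in (0,1]$ (well-defined by evenness) and using $V_0^{p-2} = \beta^{p-2} z$ together with the chain-rule identity $\partial_s^2 g = 4\alpha^2 z\bigl(z(1-z) F'' + (1-\tfrac{3}{2}z) F'\bigr)$, the equation transforms to the Fuchsian ODE
\begin{equation*}
    z^2(1-z)\,F''(z) + z\bigl(1 - \tfrac{3}{2}z\bigr) F'(z) + (bz - a)\,F(z) = c\, z^\sigma,
\end{equation*}
where $a = (1+2n/\Lambda)/(p-2)^2$, $\sigma = (2p-3)/(p-2)$, and $b,c$ are explicit constants in $p,n$. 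The regular singular points are $z = 0$ (where $|s| = \infty$) and $z = 1$ (where $s = 0$); the indicial equation at $z=0$ is $\mu^2 = a$, and only the positive root $\mu_+ = \sqrt{a} = \sqrt{1+2n/\Lambda}/(p-2)$ yields a solution decaying at infinity. Since $\cosh^{-2\mu_+}(\alpha s) \propto V_0^{\sqrt{1+2n/\Lambda}}$ and $\cosh^{-2\sigma}(\alpha s) \propto V_0^{2p-3}$, these are precisely the prefactors appearing in the claimed decomposition.

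I would then construct formal series $\Phi_1(z) = \sum_{k\ge 0} A_k z^k$ and $\Phi_2(z) = \sum_{k\ge 0} B_k z^k$ (with $A_0 = 1$ and $B_0 = c/(\sigma^2-a)$) by substituting $F = z^{\mu_+}\Phi_1$ into the homogeneous ODE and $F = z^\sigma \Phi_2$ into the full ODE, yielding the recurrence
\begin{equation*}
    A_k = A_{k-1}\cdot\frac{(k-1+\mu_+)^2 + \tfrac{1}{2}(k-1+\mu_+) - b}{(k+\mu_+)^2 - a}
\end{equation*}
and its analogue for $B_k$. Since $\mu_+^2 = a$, direct expansion gives $A_k/A_{k-1} = 1 - 3/(2k) + O(k^{-2})$ and likewise for $B_k$; the standard Raabe-type argument (summing logarithms) then produces $A_k, B_k = O(k^{-3/2})$. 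Absolute convergence of $\sum k^{-3/2}$ guarantees both series converge on $[0,1]$, so the functions $\psi_1(s) := \cosh^{-2\mu_+}(\alpha s)\,\Phi_1(z(s))$ and $\psi_2(s) := \cosh^{-2\sigma}(\alpha s)\,\Phi_2(z(s))$ are even and solve the homogeneous and inhomogeneous equations respectively on $\R\setminus\{0\}$. By uniqueness of the $H^1$ solution, $g = \eta\psi_2 + \tau\psi_1$ for suitable $\tau,\eta\in\R$, where $\tau$ is chosen to cancel the non-analytic $(1-z)^{3/2}$ Frobenius branch at $z = 1$ (thereby making $g$ smooth across $s = 0$); absorbing the proportionality constants relating $\cosh^{-2\mu_+}$ to $V_0^{\sqrt{1+2n/\Lambda}}$ (and analogously for the second term) into $\tau,\eta$ yields the claimed form.

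The main technical obstacle is handling a potential resonance: if $\sigma - \mu_+$ happens to be a non-negative integer, the denominator $(k+\mu_+)^2 - a$ in the recurrence for $B_k$ vanishes at some index, forcing a logarithmic correction to $\Phi_2$. Such resonances occur only on a discrete algebraic subset of parameter space (the first one being $7np = 10n + 8p - 8$) and can be addressed either by a continuity argument in $(p,n)$ or by folding any logarithmic contribution into the companion series via its second Frobenius branch. A secondary subtlety is that $1 - 3/(2k)$ sits exactly at the borderline of Raabe's test, so the $O(k^{-2})$ remainder in the ratio must be tracked uniformly; this is routine given the explicit polynomial structure of numerator and denominator in $k$, with the coefficient $-3/2$ arising directly from the subleading terms.
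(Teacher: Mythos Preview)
Your proposal is correct and follows essentially the same route as the proof in \cite{Fra1} to which the paper defers: spectral positivity of $-\partial_s^2+(2n+\Lambda)-(p-1)V_0^{p-2}$ for uniqueness, then the change of variable $z=\cosh^{-2}(\alpha s)$ turning the equation into a Fuchsian ODE, followed by a Frobenius analysis at $z=0$ yielding the two series with the Raabe-type bound $A_k,B_k=O(k^{-3/2})$. One minor slip: the indicial roots at $z=1$ are $0$ and $\tfrac12$, so the non-smooth branch to be cancelled by the choice of $\tau$ is $(1-z)^{1/2}$ (which corresponds to $|s|$ near $s=0$), not $(1-z)^{3/2}$; this does not affect the structure of your argument.
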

The uniqueness is a direct consequence of spectral analysis, and the power series representation can be calculated concretely. We refer the readers to \cite[Lemma 12]{Fra1} for its detailed proof. The fourth lemma comes from the discussions in \cite[Section 4]{Fra1}.
\begin{lemma}\label{le4}
    Assume $n\geq2$, $2<p<2^*$ and $\Lambda=\frac{4(n-1)}{p^2-4}$. For any $\varepsilon$ small, set
    \begin{equation*}
        E_\varepsilon:=\inf_{g\in \hat{Y}}\left((1-\varepsilon)\norm*{g}^2_{H^1(\C)}-(p-1)\int_{\C}\left(V_0^{p-2}g^2+(p-2)V_0^{2p-3}\theta_n^2g\right)\right)
    \end{equation*}
    and
    \begin{equation*}
        F:=\frac{(p-1)(p-2)}{4}\left(\frac{p-1}{\norm*{V_0}^p_{L^p(\C)}}\left(\int_\C V_0^{2p-2}\theta_n^2\right)^2-\frac{p-3}{3}\int_\C V_0^{3p-4}\theta_n^4\right).
    \end{equation*}
    Here the space $\hat{Y}$ is the orthogonal complement of $\mathrm{Span}\left\{V_0;\partial_s V_0;V_0^{\frac{p}{2}}\theta_i, 1\leq i\leq n\right\}$ in $H^1(\C)$. Then we have
    \begin{equation*}
        F>0,\quad E_0+F>0,\quad E_\varepsilon=E_0+\O(\varepsilon).
    \end{equation*}
    Moreover, when $n\geq 3$,
    \begin{equation*}
        \frac{E_0}{F}+1\rightarrow 0\quad\text{as }p\rightarrow 2^*.
    \end{equation*}
\end{lemma}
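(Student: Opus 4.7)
The plan is to combine the spectral gap from Lemma~\ref{le2} with explicit one-dimensional integral identities for the bubble $V_0$, following the line of argument in \cite{Fra1}. I would proceed in three steps, the third being by far the most delicate.

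First I would handle coercivity, attainment and continuity in $\varepsilon$. By Lemma~\ref{le2}, on $\hat Y$ the Rayleigh quotient $\norm*{g}_{H^1(\C)}^2/\int V_0^{p-2}g^2$ is bounded below by $\gamma_3>p-1$, so for $\varepsilon$ small
\begin{equation*}
    (1-\varepsilon)\norm*{g}_{H^1(\C)}^2-(p-1)\int_\C V_0^{p-2}g^2 \geq \left(1-\varepsilon-\tfrac{p-1}{\gamma_3}\right)\norm*{g}_{H^1(\C)}^2\qquad(g\in\hat Y).
\end{equation*}
This quadratic part is coercive, and since $(p-1)(p-2)\int V_0^{2p-3}\theta_n^2\,g$ is a bounded linear functional on $H^1(\C)$, the direct method yields a unique minimizer $g_\varepsilon\in\hat Y$. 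Plugging $g_0$ into $J_\varepsilon$ gives $E_\varepsilon\leq E_0-\varepsilon\norm*{g_0}_{H^1(\C)}^2$, while dropping the $\varepsilon$-term at $g_\varepsilon$ gives $E_\varepsilon\geq E_0-\varepsilon\norm*{g_\varepsilon}_{H^1(\C)}^2$; coercivity then bounds $\norm*{g_\varepsilon}_{H^1(\C)}$ uniformly, and the identity $E_\varepsilon=E_0+\O(\varepsilon)$ follows.

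Next, $F>0$ can be reduced to an algebraic inequality. When $p\leq 3$ the second term of $F$ is non-positive and the claim is trivial; when $p>3$ (which forces $n\leq 5$, since $2^*\leq 3$ for $n\geq 6$) I would use the spherical moments $\int_{\S^{n-1}}\theta_n^2\,d\theta=|\S^{n-1}|/n$ and $\int_{\S^{n-1}}\theta_n^4\,d\theta=3|\S^{n-1}|/(n(n+2))$ to factor every integral in $F$ as a spherical constant times $I_q:=\int_\R\cosh^{-q}(\alpha s)\,ds$. The Wallis-type recursion $I_q=\tfrac{q-2}{q-1}I_{q-2}$, obtained by integrating $\partial_s[\sinh(\alpha s)\cosh^{1-q}(\alpha s)]$ over $\R$, collapses $I_{4+m}^2/(I_{2+m}I_{6+m})$ with $m=4/(p-2)$ to $p(5p-6)/[2(p-1)(3p-2)]$, and $F>0$ then reduces to the polynomial inequality $p(5p-6)(n+2)>2n(p-3)(3p-2)$, which is straightforward to verify on the relevant ranges of $p$ and $n$.

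The main obstacle is the pair $E_0+F>0$ and $E_0/F+1\to 0$ as $p\to 2^*$. Following \cite[Section 4]{Fra1}, I would view $E_0+F$ as the $\sigma^4$-coefficient in a Taylor expansion of the Caffarelli-Kohn-Nirenberg deficit around $V_0$ along a perturbation starting as $\sigma V_0^{p/2}\theta_n$: the second-order correction in $\sigma$ is precisely the explicit Schr\"odinger solution supplied by Lemma~\ref{le3}, and combining this expansion with the pointwise remainders of Lemma~\ref{le1} identifies $E_0+F$ as the quantity governing the fourth-order term of the deficit. Strict positivity of the CKN deficit then forces $E_0+F>0$. For the limit, the same $I_q$-machinery together with the explicit series from Lemma~\ref{le3} expresses both $E_0$ and $F$ as rational functions of $p$ and $n$; expanding as $p\to 2^*$ (equivalently $\Lambda\to((n-2)/2)^2$) and checking that their leading contributions cancel yields $E_0/F+1\to 0$. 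The principal technical difficulty is setting up the fourth-order expansion precisely and performing the algebraic bookkeeping that isolates $E_0+F$ as the coefficient; the other two steps are essentially spectral theory and the computation of $I_q$.
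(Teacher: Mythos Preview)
Your first two steps are sound: coercivity on $\hat Y$ via the spectral gap $\gamma_3>p-1$ from Lemma~\ref{le2} yields a unique minimizer and the estimate $E_\varepsilon=E_0+\O(\varepsilon)$ by the comparison you describe, and reducing $F>0$ to a Wallis recursion on the integrals $I_q$ is correct.

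The third step has a genuine gap. You cannot deduce $E_0+F>0$ from strict positivity of the CKN deficit. Along the curve $\sigma\mapsto V_0+\sigma V_0^{p/2}\theta_n+\sigma^2 g_0+\cdots$ the deficit is even in $\sigma$ (replacing $\sigma$ by $-\sigma$ amounts to the reflection $\theta_n\mapsto-\theta_n$, a symmetry of the problem), so positivity only forces the \emph{first nonvanishing} even Taylor coefficient to be positive; nothing excludes $E_0+F=0$ with the deficit witnessed at order $\sigma^6$ or beyond. Worse, the logic in \cite{Fra1} runs the other way: the quartic stability of the CKN inequality is \emph{derived from} the explicit verification that $E_0+F>0$, so your proposed argument is circular.

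The route actually taken (sketched immediately after the lemma and carried out in \cite[Section~4]{Fra1}) is to expand $g\in\hat Y$ in spherical harmonics. Since $\theta_n^2=\tfrac{1}{n}+c_2^{-1}Y_{2,0}$, the linear source $\int_\C V_0^{2p-3}\theta_n^2 g$ couples only to the modes $a_{0,0}(s)$ and $a_{2,0}(s)$; the higher-degree modes drop out of the infimum by the spectral gap. One is left with two decoupled one-dimensional quadratic minimizations, solved by completing the square --- this requires inverting the corresponding Schr\"odinger operators, which is precisely where Lemma~\ref{le3} enters for the degree-$2$ piece (and an elementary analogue for degree $0$). The resulting closed-form expression for $E_0$ is \emph{not} rational in $p$ and $n$ as you suggest; it involves the infinite series of Lemma~\ref{le3} (compare the formula \eqref{rrr}). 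Both the strict inequality $E_0+F>0$ and the limiting behaviour $E_0/F+1\to 0$ as $p\to 2^*$ then require direct, nontrivial analysis of that explicit expression. Your abstract variational identification of $E_0$ in Step~1, while adequate for the $\O(\varepsilon)$ claim, is not enough for the rest: you must compute $E_0$.
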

Although a detailed proof of this lemma has been given in \cite{Fra1}, we briefly explain the main ideas here for the convenience of the readers, as this lemma is crucial in our proof of Theorem \ref{thm5}. The value of $F$ can be explicitly computed. As for the term $E_\varepsilon$, we expand the function $g$ in spherical harmonics: $g(s,\theta):=\sum\limits_{i,j}a_{i,j}(s)Y_{i,j}(\theta)$. Here $\{Y_{i,j}\}_j$ is an $L^2(\S^{n-1})$-orthonormal basis of spherical harmonics of degree $i$. We take $Y_{0,0}=c_1$ and $Y_{2,0}=c_2(\theta_n^2-\frac{1}{n})$, where $c_1,c_2$ are two normalization factors. Substituting this expansion into $E_\varepsilon$ and employing some spectrum conditions, we can transform $E_\varepsilon$ into a sum of two quadratic forms involving $a_{0,0}$ and $a_{2,0}$. Again, using the spectrum conditions, we can show that these two forms are non-degenerate and can thus be minimized by the standard `completing the square' argument. It is worth mentioning that in this procedure one needs to explicitly solve some special Schr\"odinger equations (see Lemma \ref{le3} for example). Even after obtaining a concrete expression for $E_\varepsilon$, additional work is required to verify that $E_0+F>0$.

Finally, we give some integral estimates involved Talenti bubbles and their derivatives.
\begin{lemma}\label{le5}
    Assume $t_1,t_2\in\R$, $q_1,q_2\geq 0$ and $q_1+q_2=p$. Then for any $\varepsilon>0$, if $|q_1-q_2|\geq\varepsilon$, it holds that
    \begin{align}\label{int1}
        \int_{\C}V_{t_1}^{q_1}V_{t_2}^{q_2}\approx_\varepsilon e^{-\sqrt{\Lambda}|t_1-t_2|\min\{q_1,q_2\}}.
    \end{align}
    When $q_1=q_2=\frac{p}{2}$, it holds that
    \begin{align}\label{int2}
       \int_{\C}V_{t_1}^{\frac{p}{2}}V_{t_2}^{\frac{p}{2}}\approx (|t_1-t_2|+1)e^{-\frac{p\sqrt{\Lambda}}{2}|t_1-t_2|}.
    \end{align}
\end{lemma}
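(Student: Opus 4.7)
The plan is to use that $V_t(s)=\beta\cosh(\alpha(s-t))^{-2/(p-2)}$ depends only on the cylinder variable $s$, so $\int_\C V_{t_1}^{q_1}V_{t_2}^{q_2} = |\S^{n-1}|\int_\R V_{t_1}^{q_1}(s)V_{t_2}^{q_2}(s)\de s$, and then reduce by translation invariance to $t_1=0$, $t_2=T\ge 0$ with $T:=|t_1-t_2|$. The pointwise estimate I shall rely on is that $V_t(s)\approx e^{-\sqrt{\Lambda}|s-t|}$ globally, with implicit constants depending only on $p,n$; this follows from $\cosh(x)\approx\max(1,e^{|x|})$ combined with the identity $\alpha\cdot 2/(p-2)=\sqrt{\Lambda}$, which makes the ratio $V_t(s)e^{\sqrt{\Lambda}|s-t|}$ continuous, positive and bounded on $\R$.

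The bounded-$T$ regime is elementary: both sides of \eqref{int1} and \eqref{int2} are positive and continuous functions of $T$, hence comparable on any compact range $T\in[0,T_0]$. The substantive case is therefore $T\ge T_0$ large, which I treat by splitting $\R=(-\infty,0]\cup[0,T]\cup[T,\infty)$ and applying the pointwise comparison above in each piece. On the left tail $(-\infty,0]$, $V_0$ is uniformly bounded while $V_T(s)\approx e^{-\sqrt{\Lambda}(T-s)}$, contributing of order $e^{-q_2\sqrt{\Lambda}T}$; symmetrically, $[T,\infty)$ contributes of order $e^{-q_1\sqrt{\Lambda}T}$.

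The decisive region is the middle one $[0,T]$, on which $V_0^{q_1}V_T^{q_2}\approx e^{-q_1\sqrt{\Lambda}s}e^{-q_2\sqrt{\Lambda}(T-s)}$, giving
\[
    \int_0^T V_0^{q_1}V_T^{q_2}\de s \;\approx\; e^{-q_2\sqrt{\Lambda}T}\int_0^T e^{(q_2-q_1)\sqrt{\Lambda}s}\de s.
\]
When $|q_1-q_2|\ge\varepsilon$, the inner integral is $\approx_\varepsilon \max\bigl(1,\,e^{(q_2-q_1)\sqrt{\Lambda}T}\bigr)$, so multiplying by the prefactor yields a middle contribution of order $\approx_\varepsilon e^{-\min(q_1,q_2)\sqrt{\Lambda}T}$, which dominates both tails and produces \eqref{int1}. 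When instead $q_1=q_2=p/2$ the integrand is constant in $s$, so the middle region contributes $T\,e^{-(p/2)\sqrt{\Lambda}T}$; combined with the two tails of order $e^{-(p/2)\sqrt{\Lambda}T}$, this gives the factor $(T+1)e^{-(p/2)\sqrt{\Lambda}T}$ of \eqref{int2}.

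The main technical point will be to establish matching upper and lower bounds uniformly in $s$ and $T$: the upper bounds follow directly from the exponential envelopes, while the lower bounds require noting that in the dominant region there is a subset of measure bounded below where the integrand is at least a definite fraction of its maximum, which is enough to extract the same exponential order from below. The hypothesis $|q_1-q_2|\ge\varepsilon$ in \eqref{int1} is used precisely to keep the denominator $(q_2-q_1)\sqrt{\Lambda}$ away from zero in the explicit evaluation above; its degeneration at $q_1=q_2=p/2$ is exactly what produces the extra $|t_1-t_2|+1$ factor appearing in \eqref{int2}.
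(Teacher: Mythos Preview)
Your proposal is correct and follows the standard route for such bubble-interaction estimates; the paper does not spell out a proof either, merely pointing to \cite[Lemma~5.1]{Wei}, where the same reduction to a one-dimensional integral and the same pointwise comparison $V_t(s)\approx e^{-\sqrt{\Lambda}|s-t|}$ are used. One small imprecision: on the left tail $(-\infty,0]$ you should use the decay $V_0(s)^{q_1}\approx e^{q_1\sqrt{\Lambda}s}$ rather than just ``$V_0$ is uniformly bounded'' (otherwise the tail integral would diverge when $q_2$ is small), but the combined integrand then decays like $e^{p\sqrt{\Lambda}s}$ and your stated order $e^{-q_2\sqrt{\Lambda}T}$ for that piece is correct.
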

\begin{lemma}\label{le6}
    Given $t_1,t_2\in\R$. If $t_1\ll t_2$, it holds that
    \begin{equation}\label{int3}
        \int_{\C}V_{t_1}^{p-1}\partial_s V_{t_2}\approx e^{\sqrt{\Lambda}(t_1-t_2)}.
    \end{equation}
\end{lemma}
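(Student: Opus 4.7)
The plan is to reduce the integral on $\C=\R\times\S^{n-1}$ to a one-dimensional integral in $s$, exploiting the fact that both $V_{t_1}$ and $\partial_s V_{t_2}$ are independent of the $\theta$ variable. Writing
\[
\int_\C V_{t_1}^{p-1}\partial_s V_{t_2}=|\S^{n-1}|\int_\R V_{t_1}^{p-1}(s)\,\partial_s V_{t_2}(s)\,ds,
\]
I first record, from the explicit formula $V_t(s)=\beta\cosh(\alpha(s-t))^{-2/(p-2)}$ and the identity $\frac{2\alpha}{p-2}=\sqrt{\Lambda}$, the two-sided pointwise bounds $V_t(s)\approx e^{-\sqrt{\Lambda}|s-t|}$ and $|\partial_s V_t(s)|\approx e^{-\sqrt{\Lambda}|s-t|}$. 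Differentiating directly gives $\partial_s V_t(s)=-\sqrt{\Lambda}\,V_t(s)\tanh(\alpha(s-t))$, so the sign of $\partial_s V_{t_2}(s)$ is positive for $s<t_2$ and negative for $s>t_2$.

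For the upper bound I split $\R$ at $t_1$ and $t_2$ and estimate each piece. On $(-\infty,t_1)$, the absolute value of the integrand is controlled by $e^{-(p-1)\sqrt{\Lambda}(t_1-s)}e^{-\sqrt{\Lambda}(t_2-s)}$, and integrating the resulting growing exponential up to $s=t_1$ yields a contribution of order $e^{-\sqrt{\Lambda}(t_2-t_1)}$. On $(t_1,t_2)$, the bound $e^{-(p-1)\sqrt{\Lambda}(s-t_1)}e^{-\sqrt{\Lambda}(t_2-s)}$ is maximized at the left endpoint (because $p>2$), producing again order $e^{-\sqrt{\Lambda}(t_2-t_1)}$. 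On $(t_2,\infty)$, the bound is at most of order $e^{-(p-1)\sqrt{\Lambda}(t_2-t_1)}$, which is strictly smaller. Summing the three contributions gives $\int_\R V_{t_1}^{p-1}|\partial_s V_{t_2}|\lesssim e^{\sqrt{\Lambda}(t_1-t_2)}$.

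For the matching lower bound, I restrict the integral to the unit slab $[t_1,t_1+1]$. There $V_{t_1}^{p-1}(s)\gtrsim 1$ (since $\cosh(\alpha(s-t_1))$ is bounded), and since $s<t_2$ on this interval, the sign computation above gives $\partial_s V_{t_2}(s)>0$ with $\partial_s V_{t_2}(s)\gtrsim e^{-\sqrt{\Lambda}(t_2-s)}\gtrsim e^{-\sqrt{\Lambda}(t_2-t_1)}$. Hence the piece over $[t_1,t_1+1]$ alone already contributes $\gtrsim e^{\sqrt{\Lambda}(t_1-t_2)}$.

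The one point where the argument could in principle fail --- and which I expect to be the main obstacle --- is the sign change of $\partial_s V_{t_2}$ at $s=t_2$: the signed integral could a priori suffer cancellation that kills the lower bound. I handle this by observing that the negative region $(t_2,\infty)$ was controlled in the upper-bound step by $e^{-(p-1)\sqrt{\Lambda}(t_2-t_1)}$, and the exponential gap $(p-1)-1=p-2>0$ ensures this negative contribution is negligible compared with the positive piece from $[t_1,t_1+1]$, once the hypothesis $t_1\ll t_2$ is interpreted as $t_2-t_1$ sufficiently large. Combining the positive lower bound with this smallness of the negative part yields $\int_\C V_{t_1}^{p-1}\partial_s V_{t_2}\approx e^{\sqrt{\Lambda}(t_1-t_2)}$.
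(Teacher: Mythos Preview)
Your proof is correct and is exactly what the paper intends: the paper offers no detailed argument for \eqref{int3}, merely remarking that it ``can be easily computed using the expressions of Talenti bubbles,'' and your splitting of $\R$ at $t_1,t_2$ together with the exponential asymptotics of $V_t$ and the sign analysis of $\partial_s V_{t_2}=-\sqrt{\Lambda}\,V_{t_2}\tanh(\alpha(s-t_2))$ is precisely such a computation. One cosmetic point: the two-sided bound $|\partial_s V_t(s)|\approx e^{-\sqrt{\Lambda}|s-t|}$ fails near $s=t$ (the $\tanh$ factor vanishes), but this does not affect your argument since you only invoke the lower bound on $[t_1,t_1+1]$, which lies far from $t_2$.
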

The proof of Lemma \ref{le5} is similar to \cite[Lemma 5.1]{Wei}. The estimate \eqref{int3} can be easily computed using the expressions of Talenti bubbles $V_t,t\in\R$.

\section{Proofs of Theorem \ref{thm1} and Theorem \ref{thm5}}\label{sec3}
In this section we first prove Theorem \ref{thm5}. Theorem \ref{thm3} then follows from the Emden-Fowler transformation. Combining Theorem \ref{thm3} and the global compactness principle \cite[Proposition 3.2]{Wei}, we obtain Theorem \ref{thm1}. We also give some remarks about our proof in the end of this section.
\begin{proof}[Proof of Theorem \ref{thm5}]
    Let us prove \eqref{main-re4} first. If \eqref{main-re4} does not hold for any $\delta>0$, then there exists a sequence $\{v_k\}_k$ of functions in $H^1(\C)$ such that
    \begin{equation}\label{qq1}
        v_k\notin \mathcal{M}_{p,\Lambda,n},\quad \norm*{v_k- V_0}_{H^1(\C)}\rightarrow 0\;\;\text{as }k\rightarrow+\infty
    \end{equation}
    and
    \begin{equation}\label{qq2}
         \frac{\norm*{H_1(v_k)}_{H^{-1}(\C)}}{  \inf\limits_{t\in\R}\norm*{v_k- V_t}_{H^1(\C)}^3}\rightarrow 0\;\;\text{as }k\rightarrow+\infty.
    \end{equation}
    Up to a suitable translation on $\R$, we can assume
    \begin{equation}\label{qq3}
        \inf_{\alpha,t\in\R}\norm*{v_k- \alpha V_t}_{H^1(\C)}=\norm*{v_k- \alpha_k V_0}_{H^1(\C)}.
    \end{equation}
    Set $\rho_k:=v_k-\alpha_kV_0$. From \eqref{qq1} it is easy to see that $|\alpha_k-1|\rightarrow 0$ and $\norm*{\rho_k}_{H^1(\C)}\rightarrow 0$. Since $\alpha_kV_0$ minimizes the distance, by variation we can derive the following two orthogonality conditions:
    \begin{equation}\label{or1}
        \left\langle \rho_k,V_0\right\rangle_{H^1(\C)}=\left\langle \rho_k,\partial_s V_0\right\rangle_{H^1(\C)}=0.
    \end{equation}
    Since $H_1(V_0)=0$, we can rewrite \eqref{or1} as
    \begin{equation}\label{or2}
        \left\langle \rho_k,V^{p-1}_0\right\rangle_{L^2(\C)}=\left\langle \rho_k,V_0^{p-2}\partial_s V_0\right\rangle_{L^2(\C)}=0.
    \end{equation}
Consider the following identity:
    \begin{equation}\label{id}
        \begin{aligned}
            \partial_s^2 v_k+\Delta_\theta v_k-\Lambda v_k+|v_k|^{p-2}v_k=&\;\partial_s^2 \rho_k+\Delta_\theta \rho_k-\Lambda \rho_k+\left(\alpha_k^{p-1}-\alpha_k\right)V_0^{p-1}\\
            &+\left(|v_k|^{p-2}v_k-\alpha_k^{p-1}V_0^{p-1}\right).
        \end{aligned}
    \end{equation}
    Testing it with $V_0$, exploiting Lemma \ref{le1} and using the orthogonality conditions in \eqref{or1} and \eqref{or2}, we can deduce
    \begin{align}
        \left|\alpha_k^{p-1}-\alpha_k\right|\int_\C V_0^{p}\lesssim&\;\norm*{H_1(v_k)}_{H^{-1}(\C)}+\chi(p>3)\int_{\C}V_0^{p-2}|\rho_k|^2+\int_{\C}V_0|\rho_k|^{p-1}\nonumber\\
        \lesssim&\;\norm*{H_1(v_k)}_{H^{-1}(\C)}+\norm*{\rho_k}^{\min\{2,p-1\}}_{H^1(\C)}.\nonumber
        \end{align}
    Since $|\alpha_k-1|\rightarrow 0$, we find
    \begin{equation}\label{qq4}
        |\alpha_k-1|\lesssim \norm*{H_1(v_k)}_{H^{-1}(\C)}+\norm*{\rho_k}^{\min\{2,p-1\}}_{H^1(\C)}.
    \end{equation}
    The combination of \eqref{qq1}, \eqref{qq2}, \eqref{qq3} and \eqref{qq4} indicates
    \begin{equation}\label{qq5}
        \frac{\norm*{H_1(v_k)}_{H^{-1}(\C)}}{  \norm*{\rho_k}_{H^1(\C)}^3}\rightarrow 0\;\;\text{as }k\rightarrow+\infty.
    \end{equation}
In the following we aim to show that \eqref{qq5} is impossible. Up to a suitable rotation on $\S^{n-1}$, we assume
    \begin{equation*}
        \rho_k=\mu_k\left(\hat{\mu}_kV_0^{\frac{p}{2}}\theta_n+\eta_k\right),
    \end{equation*}
    where $\{\mu_k\}_k,\{\hat{\mu}_k\}_k$ are two  sequences of numbers such that
    \begin{equation*}
        (\mu_k,\hat{\mu}_k)=\begin{cases}
            \left(\left\langle \rho_k,V^\frac{p}{2}_0\theta_n\right\rangle_{H^1(\C)}\norm*{V_0^\frac{p}{2}\theta_n}_{H^1(\C)}^{-2},1\right)&\text{if }\left\langle \rho_k,V^\frac{p}{2}_0\theta_n\right\rangle_{H^1(\C)}\neq 0,\\
            \left(\norm*{\rho_k}_{H^1(\C)},0\right)&\text{if }\left\langle \rho_k,V^\frac{p}{2}_0\theta_n\right\rangle_{H^1(\C)}=0;
        \end{cases}
    \end{equation*}
    and $\eta_k\in H^1(\C)$ satisfies
    \begin{equation}\label{or3}
         \left\langle \eta_k,V_0\right\rangle_{H^1(\C)}=\left\langle \eta_k,\partial_s V_0\right\rangle_{H^1(\C)}=\left\langle \eta_k,V_0^\frac{p}{2}\theta_i\right\rangle_{H^1(\C)}=0\quad\text{for any }1\leq i\leq n.
    \end{equation}
    Thanks to Lemma \ref{le2}, the relations in \eqref{or3} can be rewritten as
    \begin{equation}\label{or4}
         \left\langle \eta_k,V^{p-1}_0\right\rangle_{L^2(\C)}=\left\langle \eta_k,V_0^{p-2}\partial_s V_0\right\rangle_{L^2(\C)}=\left\langle \eta_k,V_0^{\frac{3}{2}p-2}\theta_i\right\rangle_{L^2(\C)}=0\quad\text{for any }1\leq i\leq n.
    \end{equation}
    Without loss of generality, we suppose
    \begin{equation}\label{lim}
        \lim_{k\rightarrow+\infty}\norm*{\eta_k}_{H^1(\C)}=L\in [0,+\infty].
    \end{equation}
    We shall divide our analysis for \eqref{qq5} into two parts that are devoted to the case $L>0$ and $L=0$, respectively.
    \\[8pt]
     \emph{Case 1: $L>0$}.

     Let us start by testing $H_1(v_k)$ against $\rho_k$. Using Lemma \ref{le1} and the orthogonality conditions in \eqref{or1} and \eqref{or2} yields
     \begin{align}\label{asd1}
        \norm*{\rho_k}^2_{H^1(\C)}=&-\left\langle H_1(v_k),\rho_k\right\rangle_{L^2(\C)}+\int_{\C}|v_k|^{p-2}v_k\rho_k\nonumber\\
        \leq&\;\norm*{H_1(v_k)}_{H^{-1}(\C)}\norm*{\rho_k}_{H^1(\C)}+(p-1)\alpha_k^{p-2}\int_{\C}V_0^{p-2}\rho^2_k\\
        &+C\chi(p>3)\int_{\C}V_0^{p-3}|\rho_k|^3+C\int_{\C}|\rho_k|^{p}.\nonumber
     \end{align}
     Exploiting the H\"older inequality and the estimate \eqref{qq4}, we obtain
     \begin{align}\label{rr1}
             \norm*{\rho_k}^2_{H^1(\C)}-(p-1+o_\delta(1))\int_{\C}V_0^{p-2}\rho^2_k\lesssim \norm*{H_1(v_k)}_{H^{-1}(\C)}\norm*{\rho_k}_{H^1(\C)}.
     \end{align}
     Thanks to the orthogonality conditions \eqref{or3}, \eqref{or4} and Lemma \ref{le2}, we find
     \begin{align}\label{rr2}
         \norm*{\rho_k}^2_{H^1(\C)}=&\;\mu_k^2\hat{\mu}_k^2\norm*{V_0^{\frac{p}{2}}\theta_n}^2_{H^1(\C)}+\mu_k^2\norm*{\eta_k}^2_{H^1(\C)}\nonumber\\
         \geq&\;(p-1)\mu_k^2\hat{\mu}_k^2\int_{\C}V_0^{p-2}\left(V_0^{\frac{p}{2}}\theta_n\right)^2+(p-1)\mu_k^2\int_{\C}V_0^{p-2}\eta_k^2+\mu_k^2\left(1-\frac{p-1}{\gamma_3}\right)\norm*{\eta_k}^2_{H^1(\C)}\nonumber\\
         =&\;(p-1)\int_{\C}V_0^{p-2}\rho^2_k+\mu_k^2\left(1-\frac{p-1}{\gamma_3}\right)\norm*{\eta_k}^2_{H^1(\C)}.
     \end{align}
     Recall that $\gamma_3>p-1$ is the third eigenvalue defined in Lemma \ref{le2}. Since $L>0$, we have
     \begin{equation}\label{rr3}
         \norm*{\rho_k}_{H^1(\C)}\lesssim_L \mu_k\norm*{\eta_k}_{H^1(\C)}.
     \end{equation}
     Substituting \eqref{rr2} and \eqref{rr3} into \eqref{rr1} gives
     \begin{equation*}
         \norm*{\rho_k}_{H^1(\C)}\lesssim_L \norm*{H_1(v_k)}_{H^{-1}(\C)},
     \end{equation*}
     which obviously contradicts \eqref{qq5}.
     \\[8pt]
     \emph{Case 2: $L=0$}.

     In this case we have $\hat{\mu}_k\equiv 1$ and $\mu_k\neq 0$. Since the $H^1(\C)$-norm of $\rho_k$ is comparable to $\mu_k$, \eqref{qq5} is equivalent to
     \begin{equation}\label{qq6}
         |\mu_k|^{-3}\norm*{H_1(v_k)}_{H^{-1}(\C)}\rightarrow 0\;\;\text{as }k\rightarrow+\infty.
     \end{equation}
      Let us test $H_1(v_k)$ against $\mu_k\left(V_0^\frac{p}{2}\theta_n+2\eta_k\right)$. Exploiting the orthogonality conditions in \eqref{or3}, we deduce
      \begin{align}\label{tt1}
          &\mu_k^2\norm*{V_0^\frac{p}{2}\theta_n}^2_{H^1(\C)}+2\mu_k^2\norm*{\eta_k}^2_{H^1(\C)}\nonumber\\
          =&-\mu_k\left\langle H_1(v_k),V_0^\frac{p}{2}\theta_n+2\eta_k\right\rangle_{L^2(\C)}+\mu_k\int_{\C}|v_k|^{p-2}v_k\left(V_0^\frac{p}{2}\theta_n+2\eta_k\right)\nonumber\\
          \leq&\;C|\mu_k|\norm*{H_1(v_k)}_{H^{-1}(\C)}+\mu_k\int_{\C}|v_k|^{p-2}v_k\left(V_0^\frac{p}{2}\theta_n+2\eta_k\right).
      \end{align}
      To control the second term, we make use of a high order expansion technique presented in \cite{Fra,Fra1}. To be specific, we split the integration domain $\C$ into two parts: $\left\{|\eta_k|<V_0^{\frac{p}{2}}\right\}$ and $\left\{|\eta_k|\geq V_0^\frac{p}{2}\right\}$. In the first part, using the facts that $V_0\in L^\infty(\C)$ and $p>2$, we know $\left|V_0^\frac{p}{2}\theta_n+\eta_k\right|\leq 2V_0^\frac{p}{2}\lesssim V_0$. Therefore, we can apply the third estimate in Lemma \ref{le1} to derive
      \begin{align}\label{tt2}
              |v_k|^{p-2}v_k=&\;\left(\alpha_k V_0\right)^{p-1}+\mu_k(p-1)\left(\alpha_k V_0\right)^{p-2}\left(V_0^\frac{p}{2}\theta_n+\eta_k\right)\nonumber\\
              &+\mu_k^2\frac{(p-1)(p-2)}{2}\left(\alpha_k V_0\right)^{p-3}\left(V_0^\frac{p}{2}\theta_n+\eta_k\right)^2\\
              &+\mu_k^3\frac{(p-1)(p-2)(p-3)}{6}\left(\alpha_k V_0\right)^{p-4}\left(V_0^\frac{p}{2}\theta_n+\eta_k\right)^3+\O\left(\mu_k^4V_0^{3p-5}\right).\nonumber
        \end{align}
      Note that
        \begin{align}
              V_0^{p-2}\left(V_0^\frac{p}{2}\theta_n+\eta_k\right)\left(V_0^\frac{p}{2}\theta_n+2\eta_k\right)=&\;V_0^{p-2}\left(V_0^\frac{p}{2}\theta_n\right)^2+2V_0^{p-2}\eta_k^2+3V_0^{p-2}\left(V_0^\frac{p}{2}\theta_n\right)\eta_k,\nonumber\\
              V_0^{p-3}\left(V_0^\frac{p}{2}\theta_n+\eta_k\right)^2\left(V_0^\frac{p}{2}\theta_n+2\eta_k\right)=&\;V_0^{p-3}\left(V_0^\frac{p}{2}\theta_n\right)^3+4V_0^{p-3}\left(V_0^\frac{p}{2}\theta_n\right)^2\eta_k\nonumber\\
              &+5V_0^{p-3}\left(V_0^\frac{p}{2}\theta_n\right)\eta_k^2+2V_0^{p-3}\eta_k^3\nonumber\\
              =&\;V_0^{p-3}\left(V_0^\frac{p}{2}\theta_n\right)^3+4V_0^{p-3}\left(V_0^\frac{p}{2}\theta_n\right)^2\eta_k+\O\left(\eta_k^2\right),\nonumber\\
                V_0^{p-4}\left(V_0^\frac{p}{2}\theta_n+\eta_k\right)^3\left(V_0^\frac{p}{2}\theta_n+2\eta_k\right)=&\;V_0^{p-4}\left(V_0^\frac{p}{2}\theta_n\right)^4+5V_0^{p-4}\left(V_0^\frac{p}{2}\theta_n\right)^3\eta_k\nonumber\\
                &+9V_0^{p-3}\left(V_0^\frac{p}{2}\theta_n\right)^2\eta_k^2+7V_0^{p-3}\left(V_0^\frac{p}{2}\theta_n\right)\eta_k^3\nonumber\\
                &+2V_0^{p-4}\eta_k^4\nonumber\\
                =&\;V_0^{p-4}\left(V_0^\frac{p}{2}\theta_n\right)^4+\O\left(V_0^{\frac{5}{2}p-4}|\eta_k|\right).\nonumber
            \end{align}
        Substituting these three identities into \eqref{tt2} gives
        \begin{align}\label{tt3}
            &\;|v_k|^{p-2}v_k\left(V_0^\frac{p}{2}\theta_n+2\eta_k\right)\nonumber\\
            =&\;\left(\alpha_k V_0\right)^{p-1}\left(V_0^\frac{p}{2}\theta_n+2\eta_k\right)+\mu_k(p-1)\left(\alpha_k V_0\right)^{p-2}\left(V_0^\frac{p}{2}\theta_n\right)^2\nonumber\\
            &+2\mu_k(p-1)\left(\alpha_k V_0\right)^{p-2}\eta_k^2+3\mu_k(p-1)\left(\alpha_k V_0\right)^{p-2}V_0^\frac{p}{2}\theta_n\eta_k\nonumber\\
            &+\mu_k^2\frac{(p-1)(p-2)}{2}\left(\alpha_k V_0\right)^{p-3}\left(V_0^\frac{p}{2}\theta_n\right)^3+2\mu_k^2(p-1)(p-2)\left(\alpha_k V_0\right)^{p-3}\left(V_0^\frac{p}{2}\theta_n\right)^2\eta_k\nonumber\\
            &+\mu_k^3\frac{(p-1)(p-2)(p-3)}{6}\left(\alpha_k V_0\right)^{p-4}\left(V_0^\frac{p}{2}\theta_n\right)^4\\
            &+\O\left(\mu_k^2\eta_k^2+|\mu_k|^3V_0^{\frac{5}{2}p-4}|\eta_k|+\mu_k^4V_0^{3p-5}\right).\nonumber
        \end{align}
        In the second part, since $\left|V_0^\frac{p}{2}\theta_n+\eta_k\right|\leq 2|\eta_k|$ and $p>2$, we can use the first estimate in Lemma \ref{le1} to deduce
        \begin{align}
                |v_k|^{p-2}v_k=&\;\left(\alpha_k V_0\right)^{p-1}+\mu_k(p-1)\left(\alpha_k V_0\right)^{p-2}\left(V_0^\frac{p}{2}\theta_n+\eta_k\right)\nonumber\\
                &+\O\left(\chi(p>3)\mu_k^2V_0^{p-3}\left(V_0^\frac{p}{2}\theta_n+\eta_k\right)^2+\mu_k^{p-1}\left|V_0^\frac{p}{2}\theta_n+\eta_k\right|^{p-1}\right)\nonumber\\
                =&\;\left(\alpha_k V_0\right)^{p-1}+\mu_k(p-1)\left(\alpha_k V_0\right)^{p-2}\left(V_0^\frac{p}{2}\theta_n+\eta_k\right)\nonumber\\
                &+\O\left(\chi(p>3)\mu_k^2\eta_k^2+|\mu_k|^{p-1}|\eta_k|^{p-1}\right)\nonumber
        \end{align}
        and thus
        \begin{align}\label{tt4}
            &\;|v_k|^{p-2}v_k\left(V_0^\frac{p}{2}\theta_n+2\eta_k\right)\nonumber\\
            =&\;\left(\alpha_k V_0\right)^{p-1}\left(V_0^\frac{p}{2}\theta_n+2\eta_k\right)+\mu_k(p-1)\left(\alpha_k V_0\right)^{p-2}\left(V_0^\frac{p}{2}\theta_n\right)^2\\
            &+2\mu_k(p-1)\left(\alpha_k V_0\right)^{p-2}\eta_k^2+3\mu_k(p-1)\left(\alpha_k V_0\right)^{p-2}V_0^\frac{p}{2}\theta_n\eta_k\nonumber\\
            &+\O\left(\chi(p>3)\mu_k^2|\eta_k|^3+|\mu_k|^{p-1}|\eta_k|^p\right).\nonumber
        \end{align}
        Note that in this part, we also have
        \begin{align}
            \mu_k^2\frac{(p-1)(p-2)}{2}\left(\alpha_k V_0\right)^{p-3}\left(V_0^\frac{p}{2}\theta_n\right)^3=&\;\O\left(\mu_k^2\eta_k^2\right),\nonumber\\
            2\mu_k^2(p-1)(p-2)\left(\alpha_k V_0\right)^{p-3}\left(V_0^\frac{p}{2}\theta_n\right)^2\eta_k=&\;\O\left(\mu_k^2\eta_k^2\right),\nonumber\\
            \mu_k^3\frac{(p-1)(p-2)(p-3)}{6}\left(\alpha_k V_0\right)^{p-4}\left(V_0^\frac{p}{2}\theta_n\right)^4=&\;\O\left(|\mu_k|^3\eta_k^2\right).\nonumber
        \end{align}
        Combining these three identities with \eqref{tt4} shows that the pointwise estimate \eqref{tt3} still holds in this part, but with an additional error term $\O\left(\chi(p>3)\mu_k^2|\eta_k|^3+|\mu_k|^{p-1}|\eta_k|^p\right)$.

        Exploiting our arguments above and utilizing the orthogonality conditions in \eqref{or3} and \eqref{or4} yields
        \begin{align}\label{ff1}
            &\int_{\C}|v_k|^{p-2}v_k\left(V_0^\frac{p}{2}\theta_n+2\eta_k\right)\nonumber\\
            =&\;\mu_k(p-1)\int_\C\left(\alpha_k V_0\right)^{p-2}\left(V_0^\frac{p}{2}\theta_n\right)^2+2\mu_k(p-1)\int_\C\left(\alpha_k V_0\right)^{p-2}\eta_k^2\nonumber\\
            &+2\mu_k^2(p-1)(p-2)\int_\C\left(\alpha_k V_0\right)^{p-3}\left(V_0^\frac{p}{2}\theta_n\right)^2\eta_k\nonumber\\
            &+\mu_k^3\frac{(p-1)(p-2)(p-3)}{6}\int_\C\left(\alpha_k V_0\right)^{p-4}\left(V_0^\frac{p}{2}\theta_n\right)^4\nonumber\\
            &+\O\left(|\mu_k|^{\min\{2,p-1\}}\norm*{\eta_k}^2_{H^1(\C)}+\mu_k^4\right).
        \end{align}
        In this computation we also use the H\"older inequality and the following two inequalities
        \begin{equation*}
            \begin{aligned}
                2|\mu_k|^3\norm*{\eta_k}_{H^1(\C)}\leq\; \mu_k^2\norm*{\eta_k}^2_{H^1(\C)}+\mu_k^4,\quad|\mu_k|^{p-1}\norm*{\eta_k}^p_{H^1(\C)}\leq&\;|\mu_k|^{p-1}\norm*{\eta_k}^2_{H^1(\C)}.
            \end{aligned}
        \end{equation*}
        Now it follows from \eqref{tt1}, \eqref{ff1} and Lemma \ref{le2} that the value of
            \begin{align}\label{ff2}
                &\;\mu_k^2\norm*{V_0^\frac{p}{2}\theta_n}^2_{H^1(\C)}+2\mu_k^2\norm*{\eta_k}^2_{H^1(\C)}-\mu_k\int_{\C}|v_k|^{p-2}v_k\left(V_0^\frac{p}{2}\theta_n+2\eta_k\right)\nonumber\\
                =&\;\mu_k^2(p-1)\left(1-\alpha_k^{p-2}\right)\int_\C V_0^{p-2}\left(V_0^\frac{p}{2}\theta_n\right)^2+2\mu_k^2\left(1-C|\mu_k|^{\min\{1,p-2\}}\right)\norm*{\eta_k}_{H^1(\C)}^2\nonumber\\
                &-2\mu_k^2(p-1)\int_\C\left(\alpha_k V_0\right)^{p-2}\eta_k^2-2\mu_k^3(p-1)(p-2)\int_\C\left(\alpha_k V_0\right)^{p-3}\left(V_0^\frac{p}{2}\theta_n\right)^2\eta_k\\
                &-\mu_k^4\frac{(p-1)(p-2)(p-3)}{6}\int_\C\left(\alpha_k V_0\right)^{p-4}\left(V_0^\frac{p}{2}\theta_n\right)^4+\O\left(|\mu_k|^5\right) \nonumber
        \end{align}
        can be bounded above by $C|\mu_k|\norm*{H_1(v_k)}_{H^{-1}(\C)}$.

        To simplify \eqref{ff2}, we need to find appropriate estimates for $\left|\alpha_k^{p-2}-1\right|$ and $\left|\alpha_k-1\right|$. Note that \eqref{qq4} does not meet our requirements. Let us test the identity in \eqref{id} with $V_0$ again and use the orthogonality conditions in \eqref{or3} to derive
        \begin{equation}\label{ff3}
            \begin{aligned}
                \left|\left(\alpha_k^{p-1}-\alpha_k\right)\int_\C V_0^{p}+\int_\C\left(|v_k|^{p-2}v_k-\alpha_k^{p-1}V_0^{p-1}\right)V_0\right|\lesssim \norm*{H_1(v_k)}_{H^{-1}(\C)}
            \end{aligned}
        \end{equation}
        To control the second term, we apply the high order expansion technique and Lemma \ref{le1} once again. In the first region $\left\{|\eta_k|<V_0^{\frac{p}{2}}\right\}$, we have
        \begin{align}\label{ff4}
            &\;\left(|v_k|^{p-2}v_k-\alpha_k^{p-1}V_0^{p-1}\right)V_0\nonumber\\
            =&\;\mu_k(p-1)\alpha_k^{p-2}V_0^{p-1}\left(V_0^\frac{p}{2}\theta_n+\eta_k\right)\nonumber\\
            &+\mu_k^2\frac{(p-1)(p-2)}{2}\alpha_k^{p-3}V_0^{p-2}\left(V_0^\frac{p}{2}\theta_n+\eta_k\right)^2+\O\left(|\mu_k|^3V_0^{\frac{5}{2}p-3}\right)\nonumber\\
            =&\;\mu_k(p-1)\alpha_k^{p-2}V_0^{p-1}\left(V_0^\frac{p}{2}\theta_n+\eta_k\right)+\mu_k^2\frac{(p-1)(p-2)}{2}\alpha_k^{p-3}V_0^{p-2}\left(V_0^\frac{p}{2}\theta_n\right)^2\\
            &+\mu_k^2(p-1)(p-2)\alpha_k^{p-3}V_0^{p-2}\left(V_0^\frac{p}{2}\theta_n\right)\eta_k+\O\left(|\mu_k|^3V_0^{\frac{5}{2}p-3}+\mu_k^2\eta_k^2\right)\nonumber.
        \end{align}
        In the second region $\left\{|\eta_k|\geq V_0^{\frac{p}{2}}\right\}$, we have
        \begin{align}\label{ff5}
                &\;\left(|v_k|^{p-2}v_k-\alpha_k^{p-1}V_0^{p-1}\right)V_0\nonumber\\
                =&\;\mu_k(p-1)\alpha_k^{p-2}V_0^{p-1}\left(V_0^\frac{p}{2}\theta_n+\eta_k\right)+\O\left(\chi(p>3)\mu_k^2\eta_k^2+\mu_k^{p-1}V_0|\eta_k|^{p-1}\right)\nonumber\\
                =&\;\mu_k(p-1)\alpha_k^{p-2}V_0^{p-1}\left(V_0^\frac{p}{2}\theta_n+\eta_k\right)+\mu_k^2\frac{(p-1)(p-2)}{2}\alpha_k^{p-3}V_0^{p-2}\left(V_0^\frac{p}{2}\theta_n\right)^2\nonumber\\
                 &+\mu_k^2(p-1)(p-2)\alpha_k^{p-3}V_0^{p-2}\left(V_0^\frac{p}{2}\theta_n\right)\eta_k+\O\left(\mu_k^2\eta_k^2+\mu_k^{p-1}V_0|\eta_k|^{p-1}\right).
            \end{align}
        The combination of \eqref{ff4}, \eqref{ff5} and the orthogonality conditions in \eqref{or4} yields
        \begin{align}\label{ff6}
            \int_\C\left(|v_k|^{p-2}v_k-\alpha_k^{p-1}V_0^{p-1}\right)V_0=&\;\mu_k^2\frac{(p-1)(p-2)}{2}\alpha_k^{p-3}\int_\C V_0^{p-2}\left(V_0^\frac{p}{2}\theta_n\right)^2\nonumber\\
            &+\O\left(|\mu_k|^3+|\mu_k|^{\min\{2,p-1\}}\norm*{\eta_k}_{H^1(\C)}^{\min\{2,p-1\}}\right).
        \end{align}
It follows from \eqref{ff3} and \eqref{ff6} that
    \begin{align}\label{ff7}
        &\;\left|\left(\alpha_k^{p-1}-\alpha_k\right)\int_\C V_0^{p}+\mu_k^2\frac{(p-1)(p-2)}{2}\alpha_k^{p-3}\int_\C V_0^{p-2}\left(V_0^\frac{p}{2}\theta_n\right)^2\right|\nonumber\\
        \lesssim&\;\norm*{H_1(v_k)}_{H^{-1}(\C)}+\O\left(|\mu_k|^3+|\mu_k|^{\min\{2,p-1\}}\norm*{\eta_k}_{H^1(\C)}^{\min\{2,p-1\}}\right).
    \end{align}
One can easily derive from \eqref{ff7} that
\begin{align}\label{ff8}
        &\;\left|\left(\alpha_k^{p-2}-1\right)+\mu_k^2\frac{(p-1)(p-2)}{2}\alpha_k^{p-4}\frac{\int_\C V_0^{p-2}\left(V_0^\frac{p}{2}\theta_n\right)^2}{\int_\C V_0^{p}}\right|\nonumber\\
        \lesssim&\;\norm*{H_1(v_k)}_{H^{-1}(\C)}+\O\left(|\mu_k|^{\min\{3,2p-2\}}+|\mu_k|^{\min\{2,2p-4\}}\norm*{\eta_k}_{H^1(\C)}^2\right)
    \end{align}
and
\begin{equation}\label{ff9}
    \begin{aligned}
        |\alpha_k-1|\lesssim \norm*{H_1(v_k)}_{H^{-1}(\C)}+\O\left(\mu_k^2+|\mu_k|^{\min\{2,2p-4\}}\norm*{\eta_k}_{H^1(\C)}^2\right).
    \end{aligned}
\end{equation}
Combining \eqref{ff8} and \eqref{ff9} into \eqref{ff2} yields that the value of
\begin{equation}\label{ff10}
    \begin{aligned}
        &\;2\mu_k^2\left(1-C|\mu_k|^{\min\{1,p-2\}}\right)\norm*{\eta_k}_{H^1(\C)}^2-2\mu_k^2(p-1)\int_\C V_0^{p-2}\eta_k^2\\
        &-2\mu_k^3(p-1)(p-2)\int_\C V_0^{p-3}\left(V_0^\frac{p}{2}\theta_n\right)^2\eta_k+\mu_k^4\frac{(p-1)^2(p-2)}{2}\frac{\left(\int_\C V_0^{p-2}\left(V_0^\frac{p}{2}\theta_n\right)^2\right)^2}{\int_\C V_0^{p}}\\
        &-\mu_k^4\frac{(p-1)(p-2)(p-3)}{6}\int_\C V_0^{p-4}\left(V_0^\frac{p}{2}\theta_n\right)^4+\O\left(|\mu_k|^5\right)
    \end{aligned}
\end{equation}
can be bounded above by $C|\mu_k|\norm*{H_1(v_k)}_{H^{-1}(\C)}$. From Lemma \ref{le4} we know that the value of \eqref{ff10} is larger than $2\mu_k^4\left(E_0+F+\O\left(|\mu_k|^{\min\{1,p-2\}}\right)\right)$. Thus
\begin{equation*}
    \liminf_{k\rightarrow +\infty}|\mu_k|^{-3}\norm*{H_1(v_k)}_{H^{-1}(\C)}\gtrsim E_0+F>0.
\end{equation*}
This contradicts \eqref{qq6} and \eqref{qq5}.

Now we show that the cubic estimate \eqref{main-re4} is optimal. Choose a sequence $\{\mu_k\}_k$ of numbers such that $\mu_k\neq 0$ and $\mu_k\rightarrow 0$. Set
\begin{equation}\label{cons}
    w_k:=\left(1-C_0\mu_k^2\right)V_0+\mu_k\left(V_0^{\frac{p}{2}}\theta_n+\mu_k\eta\right).
\end{equation}
The constant $C_0=C_0(p,n)$ and the function $\eta\in H^1(\C)$ will be determined later. We require $\eta$ to satisfy the orthogonality conditions in \eqref{or3} and \eqref{or4}. Moreover, we need $|\eta|\lesssim V_0$. Thanks to Lemma \ref{le1} and our assumptions, we have the expansion
\begin{align}
    &\;\partial_s^2 w_k+\Delta_\theta w_k-\Lambda w_k+|w_k|^{p-2}w_k\nonumber\\
    =&-\left(1-C_0\mu_k^2\right)V_0^{p-1}-\mu_k(p-1)V_0^{p-2}V_0^\frac{p}{2}\theta_n+\mu_k^2\left(\partial_s^2 \eta+\Delta_\theta \eta-\Lambda \eta\right)\nonumber\\
    &+\left(1-C_0\mu_k^2\right)^{p-1}V_0^{p-1}+\mu_k(p-1)\left(1-C_0\mu_k^2\right)^{p-2}V_0^{p-2}V_0^\frac{p}{2}\theta_n\nonumber\\
    &+\mu_k^2(p-1)\left(1-C_0\mu_k^2\right)^{p-2}V_0^{p-2}\eta+\mu_k^2\frac{(p-1)(p-2)}{2}\left(1-C_0\mu_k^2\right)^{p-3}V_0^{2p-3}\theta_n^2\nonumber\\
    &+\O\left(|\mu_k|^3V_0^{p-1}\right)\nonumber\\
    =&-(p-2)C_0\mu_k^2V_0^{p-1}+\mu_k^2\left(\partial_s^2 \eta+\Delta_\theta \eta-\Lambda \eta\right)+\mu_k^2(p-1)V_0^{p-2}\eta\nonumber\\
    &+\mu_k^2\frac{(p-1)(p-2)}{2}V_0^{2p-3}\theta_n^2+\O\left(|\mu_k|^3V_0^{p-1}\right).\nonumber
\end{align}
Note that
\begin{equation*}
    \inf_{t\in\R}\norm*{w_k- V_t}_{H^1(\C)}=|\mu_k|\norm*{V_0^{\frac{p}{2}}\theta_n}_{H^1(\C)}+\O\left(\mu_k^2\right).
\end{equation*}
It suffices to choose suitable $C_0$ and $\eta$ such that
\begin{equation}\label{cc1}
    -\partial_s^2 \eta-\Delta_\theta \eta+\Lambda \eta-(p-1)V_0^{p-2}\eta+(p-2)C_0V_0^{p-1}=\frac{(p-1)(p-2)}{2}V_0^{2p-3}\theta_n^2.
\end{equation}
Assume $\eta$ has the following expansion in spherical harmonics
$$\eta=\eta_1\left(\theta_n^2-\frac{1}{n}\right)+\eta_2.$$
Then \eqref{cc1} is equivalent to
\begin{equation*}
\begin{aligned}
    \frac{(p-1)(p-2)}{2}V_0^{2p-3}\theta_n^2=&\;\left(-\partial_s^2 \eta_1+(2n+\Lambda) \eta_1-(p-1)V_0^{p-2}\eta_1\right)\left(\theta_n^2-\frac{1}{n}\right)\\
    &-\partial_s^2 \eta_2+\Lambda \eta_2-(p-1)V_0^{p-2}\eta_2+(p-2)C_0V_0^{p-1}.
\end{aligned}
\end{equation*}
Thanks to Lemma \ref{le3}, we can take $\eta_1$ to be the unique solution of
\begin{equation*}
    -\partial_s^2 \eta_1+(2n+\Lambda) \eta_1-(p-1)V_0^{p-2}\eta_1=\frac{(p-1)(p-2)}{2}V_0^{2p-3}.
\end{equation*}
It remains to choose $C_0$ and $\eta_2$ such that
\begin{equation*}
    -\partial_s^2 \eta_2+\Lambda \eta_2-(p-1)V_0^{p-2}\eta_2+(p-2)C_0V_0^{p-1}=\frac{(p-1)(p-2)}{2n}V_0^{2p-3}.
\end{equation*}
In fact, by direct computation we have
\begin{equation*}
    -\partial_s^2 V_0^{p-1}+\Lambda V_0^{p-1}-(p-1)V_0^{p-2}V_0^{p-1}=\frac{2(p-1)(p-2)}{p}V_0^{2p-3}-p(p-2)\Lambda V_0^{p-1}
\end{equation*}
and
\begin{equation*}
    -\partial_s^2 V_0+\Lambda V_0-(p-1)V_0^{p-2}V_0=-(p-2)V_0^{p-1}.
\end{equation*}
Therefore, to achieve \eqref{cc1}, we only need to take
\begin{equation*}
    \eta_2:=\frac{p}{4n}V_0^{p-1}-\frac{p\int_\C V_0^{2p-2}}{4n\int_\C V_0^p}V_0
\end{equation*}
and
\begin{equation*}
    C_0:=\frac{p^2}{4n}\Lambda-\frac{p\int_\C V_0^{2p-2}}{4n\int_\C V_0^p}.
\end{equation*}

Next we derive \eqref{main-re5}. Given $v\in H^1(\C)$. Assume $V_0$ attains the infimum in \eqref{main-re4} and set $\rho:=v-V_0$. Define $c_0=\left(\int_\C V_0^p\right)^{-1}\int_\C \rho V_0^{p-1}$, $\rho_1=\Pi_{Y_0}(\rho)$ and $\rho_2=\Pi_{Y_0}^\perp (\rho)-c_0V_0$. We have $\rho=c_0V_0+\rho_1+\rho_2$. By testing $H_1(v)$ against $V_0$ and using Lemma \ref{le1}, we can obtain 
\begin{align}\label{edc1}
    |c_0|\lesssim \norm*{H_1(v)}_{H^{-1}(\C)}+\norm*{\rho_1}^{\min\{2,p-1\}}_{H^1(\C)}+\norm*{\rho_2}^{\min\{2,p-1\}}_{H^1(\C)}.
\end{align}
By testing $H_1(v)$ against $\rho_2$ and using orthogonality conditions, we deduce
\begin{align}
    \norm*{\rho_2}_{H^1(\C)}^2 \leq&\; \norm*{H_1(v)}_{H^{-1}(\C)}\norm*{\rho_2}_{H^1(\C)}+(p-1)(1+c_0)^{p-2}\int_\C V_0^{p-2}\rho_2^2\nonumber\\
    &+C\norm*{\rho_1}^{\min\{2,p-1\}}_{H^1(\C)}\norm*{\rho_2}_{H^1(\C)}+C\norm*{\rho_2}^{\min\{3,p\}}_{H^1(\C)}.\nonumber
\end{align}
It follows from Lemma \ref{le2} that
\begin{align}
    \norm*{\rho_2}_{H^1(\C)}^2\geq \gamma_3 \int_\C V_0^{p-2}\rho_2^2\nonumber
\end{align}
and $\gamma_3>p-1$. Hence 
\begin{align}\label{edc2}
    \norm*{\rho_2}_{H^1(\C)} \lesssim \norm*{H_1(v)}_{H^{-1}(\C)}+\norm*{\rho_1}^{\min\{2,p-1\}}_{H^1(\C)}.
\end{align}
From the assumptions we have
\begin{align}\label{edc3}
    \norm*{\rho_1}^{\min\{2,p-1\}}_{H^1(\C)}\ll |c_0|+\norm*{\rho_2}_{H^1(\C)}.
\end{align}
Combining \eqref{edc1}, \eqref{edc2} and \eqref{edc3} yields
\begin{align}
    |c_0|+\norm*{\rho_2}_{H^1(\C)} \lesssim \norm*{H_1(v)}_{H^{-1}(\C)},
\end{align}
which is equivalent to \eqref{main-re5}.
\end{proof}

\begin{proof}[Proof of Theorem \ref{thm1}]
    From the global compactness principle (see \cite[Proposition 3.2]{Wei} for example), we know that for any $\varepsilon>0$, if
    \begin{equation*}
        \norm*{H(u)}_{D_a^{-1,2}(\R^n)}\leq\varepsilon,
    \end{equation*}
    then $u$ is $o_\varepsilon(1)$-close to $\sum\limits_{i=1}^\nu W_i$. Here $\nu\in\N_+$ and $W_i\;(1\leq i\leq\nu)$ are nontrivial solutions to the equation \eqref{eq}. Moreover, it holds that
    \begin{equation*}
        \norm*{u}_{D_a^{1,2}(\R^n)}^2=\sum_{i=1}^\nu \norm*{W_i}_{D_a^{1,2}(\R^n)}^2+o_\varepsilon(1).
    \end{equation*}
    Note that for any sign-changing solution $W$ to the equation \eqref{eq}, we have the energy estimate
    \begin{align}
        \norm*{W}^2\geq 2S(a,b,n)^\frac{p}{p-2}.\nonumber
    \end{align}
    It follows from \eqref{zzz1} that when $\varepsilon$ is small, it must hold that $\nu=1$ and $W_1$ is a Talenti bubble. Now we can take a number $\varepsilon_0=\varepsilon_0(p,n)$ such that if
    \begin{align}\label{aa1}
        \norm*{H(u)}_{D_a^{-1,2}(\R^n)}\leq\varepsilon_0,
    \end{align}
    then 
    \begin{equation*}
        \inf_{\lambda>0}\norm*{u-U_\lambda}_{D_a^{1,2}(\R^n)}\leq\delta.
    \end{equation*}
    Here $\delta$ is the constant that appears in Theorem \ref{thm3} and Theorem \ref{thm5}. It follows from Theorem \ref{thm3} that \eqref{main-re1} and \eqref{main-re2} hold when \eqref{aa1} holds. If \eqref{aa1} does not hold, \eqref{main-re1} and \eqref{main-re2} clearly hold since the left sides of them have trivial upper bounds. One can verify the optimality of \eqref{main-re1} by choosing the functions $w_k$ defined in \eqref{cons}.
\end{proof}
\begin{remark}\label{re1}
    In the \emph{Case 2} in our proof of Theorem \ref{thm5}, we choose $\mu_k\left(V_0^\frac{p}{2}\theta_n+2\eta_k\right)$ as the test function. A more natural choice, however, is $\mu_k\left(V_0^\frac{p}{2}\theta_n+\eta_k\right)$, as it describes the distance from the function $v_k$ to the manifold $\left\{\lambda V_t\;\big|\;\lambda,t\in\R\right\}$. However, this choice hinders the proof over the Felli-Schneider curve's full parameter range. Indeed, for any $\lambda>0$, if we use $\mu_k\left(V_0^\frac{p}{2}\theta_n+\lambda\eta_k\right)$ as the test function, we will finally deduce
    \begin{equation*}
    \liminf_{k\rightarrow +\infty}|\mu_k|^{-3}\norm*{H_1(v_k)}_{H^{-1}(\C)}\gtrsim \frac{(\lambda+2)^2}{4\lambda}E_0+2F.
\end{equation*}
If $\lambda\neq 2$, from Lemma \ref{le4} we know that our proof fails when $n\geq3$ and $p$ is close to $2^*$. Therefore, our initial choice of the test function appears to be optimal.
\end{remark}
\vspace{10pt}
\begin{remark}\label{re2}
    The construction \eqref{cons} of the sequence $\{w_k\}_k$ in the proof of Theorem \ref{thm5} is technical. Since the degeneracy of the equation \eqref{eq1} stems from the nontrivial functions $V_0^{\frac{p}{2}}\theta_i,1\leq i\leq n$, the sequence $\left\{\hat{w}_k:=V_0+\mu_kV_0^\frac{p}{2}\theta_n\right\}_k$ appears to be a natural choice. However, this sequence can only yield a quadratic estimate. The sequence $\{w_k\}_k$ can be viewed as an appropriate perturbation of $\{\hat{w}_k\}_k$. The process of constructing the sequence $\{w_k\}_k$ demonstrates that our choices of the number $C_0$ and the function $\eta$ are nearly optimal for obtaining a cubic estimate.
\end{remark}
\vspace{10pt}
\begin{remark}\label{re3}
    Here we derive the asymptotic estimate \eqref{asy}. Let us first give a reformulation of it: let $\{v_k\}_k$ be a sequence in $H^1(\C)$ such that $v_k\notin \mathcal{M}_{p,\Lambda,n}$ and $$\inf_{t\in\R}\norm*{v_k- V_t}_{H^1(\C)}\rightarrow 0\;\;\text{as }k\rightarrow+\infty,$$ then there exists a positive number $R(p,n)$ such that
    \begin{equation}\label{asy1}
        \liminf_{k\rightarrow+\infty}\frac{\norm*{H_1(v_k)}_{H^{-1}(\C)}}{\inf\limits_{t\in\R}\norm*{v_k- V_t}_{H^1(\C)}^3}\geq R(p,n).
    \end{equation}
    Without loss of generality, we assume that the limit of the left term exists and is finite. Reviewing the proof of Theorem \ref{thm5} with more attention to the constant $C$, we can show that the \emph{Case 2} must hold and the value of \eqref{ff10} can be bounded above by
    \begin{equation*}
        \left(|\mu_k|\norm*{V_0^{\frac{p}{2}}\theta_n}_{H^1(\C)}+\O\left(\mu_k^2\right)\right)\norm*{H_1(v_k)}_{H^{-1}(\C)}.
    \end{equation*}
    Thanks to Lemma \ref{le4}, we know that \eqref{ff10} has the lower bound
    \begin{equation*}
        2\mu_k^4\left(E_0+F+\O\left(|\mu_k|^{\min\{1,p-2\}}\right)\right).
    \end{equation*}
    Thus we have
    \begin{equation*}
        \liminf_{k\rightarrow+\infty}|\mu_k|^{-3}\norm*{H_1(v_k)}_{H^{-1}(\C)}\geq 2(E_0+F)\norm*{V_0^{\frac{p}{2}}\theta_n}_{H^1(\C)}^{-1}.
    \end{equation*}
    From \eqref{ff7} we get
    \begin{equation*}
        |\alpha_k-1|=\O(|\mu_k|^{\min\{2,p-1\}}).
    \end{equation*}
    Therefore,
    \begin{equation*}
        \inf_{t\in\R}\norm*{v_k- V_t}_{H^1(\C)}\leq \norm*{v_k- V_0}_{H^1(\C)}=|\mu_k|\left(\norm*{V_0^{\frac{p}{2}}\theta_n}_{H^1(\C)}+\norm*{\eta_k}_{H^1(\C)}+\O(|\mu_k|^{\min\{1,p-2\}})\right).
    \end{equation*}
    Combining all the arguments above, we can obtain
    \begin{equation*}
        \liminf_{k\rightarrow+\infty}\frac{\norm*{H_1(v_k)}_{H^{-1}(\C)}}{\inf\limits_{t\in\R}\norm*{v_k- V_t}_{H^1(\C)}^3}\geq 2(E_0+F)\norm*{V_0^{\frac{p}{2}}\theta_n}_{H^1(\C)}^{-4}=:R(p,n).
    \end{equation*}
    Since the value of $E_0+F$ has been calculated in \cite[Section 4]{Fra1}, here we can compute
    \begin{align}\label{rrr}
        R(p,n)=&\;\alpha\beta^{-p}\pi^{-\frac{1}{2}}|\S^{n-1}|^{-1}\frac{2p(p-2)}{5p-6}\frac{\Gamma\left(\frac{2p-2}{p-2}+\frac{1}{2}\right)}{\Gamma\left(\frac{2p-2}{p-2}\right)}\nonumber\\
        &\times \left(\frac{3p-4}{4p-4}-\frac{pn-3n}{pn+2p}-\frac{n-1}{n+2}\sum_{k=0}^{+\infty}\frac{P(k-\xi)-P(k)}{P(-1)}\right),
    \end{align}
    where
    \begin{equation*}
        P(x):=\frac{\Gamma\left(x+\frac{3}{2}\right)\Gamma\left(x+2\xi_1-1\right)\Gamma\left(x+2\xi_1\right)}{\Gamma\left(x+\xi_1-\xi_2+1\right)\Gamma\left(x+\xi_1+\xi_2+1\right)\Gamma\left(x+2\xi_1+\frac{1}{2}\right)},
    \end{equation*}
    $\alpha:=\frac{p-2}{2}\sqrt{\Lambda}$, $\beta:=\left(\frac{p\Lambda}{2}\right)^{\frac{1}{p-2}}$, $\xi_1:=\frac{2p-3}{p-2}$, $\xi_2:=\frac{\sqrt{1+\frac{2n}{\Lambda}}}{p-2}$ and $\xi:=\xi_1-\xi_2$.
\end{remark}

\section{Proofs of Theorem \ref{thm2} and Theorem \ref{thm6}}\label{sec4}
In this section we first establish Theorem \ref{thm6}. Theorem \ref{thm4} can then be obtained by using the Emden-Fowler transformation. Combining Theorem \ref{thm4} and the global compactness principle yields Theorem \ref{thm2}.  
\begin{proof}[Proof of Theorem \ref{thm6}]
    Assume 
    \begin{align}
        \inf_{\substack{s_i\in\R\\
    1\leq i\leq\nu}}\norm*{v-\sum_{i=1}^\nu V_{s_i}}_{H^1(\C)}=\norm*{v-\sum_{i=1}^{\nu}V_{t_i}}_{H^1(\C)}\leq \delta\nonumber
    \end{align}
    and $|t_i-t_j|\geq \delta^{-1}$ for any $i\neq j$. For simplicity, we set
    \begin{align}
        \sigma:=\sum_{i=1}^{\nu}V_{t_i},\quad \rho:=v-\sigma,\quad R:=\min_{i\neq j}\{|t_i-t_j|\},\quad Q:=e^{-\sqrt{\Lambda}R}.\nonumber
    \end{align}
    We also assume $-\infty=t_0<t_1<t_2<\cdots<t_\nu<t_{\nu+1}=+\infty$ and $V_{t_0}=V_{t_{\nu+1}}=0$. By variation we can obtain the following orthogonality conditions:
    \begin{equation}\label{or5}
        \left\langle \rho,\partial_s V_{t_i}\right\rangle_{H^1(\C)}=\left\langle \rho,V_{t_i}^{p-2}\partial_s V_{t_i}\right\rangle_{L^2(\C)}=0\quad \text{for }1\leq i\leq\nu.
    \end{equation}
    
    Let us first provide a lower bound for $\norm*{H_1(v)}_{H^{-1}(\C)}$. Specifically, we prove that, when $\delta$ is small, it holds that
    \begin{equation}\label{lower bd}
        \norm*{H_1(v)}_{H^{-1}(\C)}+\norm*{\rho}_{H^1(\C)}^2+\max_{1\leq k\leq\nu}\left|\int_\C\sigma^{p-2}\rho\;\partial_s V_{t_k}\right|\gtrsim Q.
    \end{equation}
    We adapt the ideas in \cite{Den,Wei} to derive \eqref{lower bd}. Consider the following identity:
    \begin{equation*}
    \begin{aligned}
        \partial_s^2 v+\Delta_\theta v-\Lambda v+|v|^{p-2}v=&\;\partial_s^2 \rho+\Delta_\theta \rho-\Lambda \rho+\left(|v|^{p-2}v-\sigma^{p-1}-(p-1)\sigma^{p-2}\rho\right)\\
        &+\left(\sigma^{p-1}-\sum_{i=1}^\nu V_{t_i}^{p-1}\right)+(p-1)\sigma^{p-2}\rho.
    \end{aligned}
    \end{equation*}
    Fix a number $1\leq k\leq \nu$. Testing this identity with $\partial_s V_{t_k}$ and using the orthogonality conditions in \eqref{or5}, we deduce
    \begin{equation}\label{est1}
    \begin{aligned}
        &\;\norm*{H_1(v)}_{H^{-1}(\C)}+\left|\int_\C\sigma^{p-2}\rho\;\partial_s V_{t_k}\right|\\
        \gtrsim&\;\int_{\C}\left(|v|^{p-2}v-\sigma^{p-1}-(p-1)\sigma^{p-2}\rho\right)\partial_s V_{t_k}+\int_{\C}\left(\sigma^{p-1}-\sum_{i=1}^\nu V_{t_i}^{p-1}\right)\partial_s V_{t_k}.
    \end{aligned}
    \end{equation}
    We need to estimate the two integrals on the right hand side. As for the first integral, we split $\C$ into two regions: $\{2|\rho|\leq \sigma\}$ and $\{2|\rho|>\sigma\}$. In the first region, based on Lemma \ref{le1} and the fact that $|\partial_s V_{t_k}|\lesssim V_{t_k}$, we have
    \begin{equation*}
        \left|\left(|v|^{p-2}v-\sigma^{p-1}-(p-1)\sigma^{p-2}\rho\right)\partial_s V_{t_k}\right|\lesssim C\sigma^{p-2}\rho^2.
    \end{equation*}
    In the second region, we can directly derive
    \begin{equation*}
        \left|\left(|v|^{p-2}v-\sigma^{p-1}-(p-1)\sigma^{p-2}\rho\right)\partial_s V_{t_k}\right|\lesssim |\rho|^{p}.
    \end{equation*}
    Thus we can obtain
    \begin{equation}\label{est2}
        \left|\int_{\C}\left(|v|^{p-2}v-\sigma^{p-1}-(p-1)\sigma^{p-2}\rho\right)\partial_s V_{t_k}\right|\lesssim \norm*{\rho}_{H^1(\C)}^2.
    \end{equation}
    As for the second integral, we decompose it into four parts:
    \begin{align}
        \int_{\C}\left(\sigma^{p-1}-\sum_{i=1}^\nu V_{t_i}^{p-1}\right)\partial_s V_{t_k}=\I_1+\I_2+\I_3+\I_4,\nonumber
    \end{align}
    where
    \begin{align}
        &\I_1=\int_{\{\sigma_k\leq V_{t_k}\}}\left((p-1)V_{t_k}^{p-2}\sigma_k-\sum_{i=1,i\neq k}^{\nu}V_{t_i}^{p-1}\right)\partial_s V_{t_k},\nonumber\\
        &\I_2=\int_{\{\sigma_k\leq V_{t_k}\}}\left(\sigma^{p-1}-V_{t_k}^{p-1}-(p-1)V_{t_k}^{p-2}\sigma_k\right)\partial_s V_{t_k},\nonumber\\
        &\I_3=\int_{\{\sigma_k> V_{t_k}\}}\left((p-1)\sigma_k^{p-2}V_{t_k}+\sigma_k^{p-1}-\sum_{i=1}^{\nu}V_{t_i}^{p-1}\right)\partial_s V_{t_k},\nonumber\\
        &\I_4=\int_{\{\sigma_k> V_{t_k}\}}\left(\sigma^{p-1}-\sigma_k^{p-1}-(p-1)\sigma_k^{p-2}V_{t_k}\right)\partial_s V_{t_k},\nonumber
    \end{align}
    and $\sigma_k:=\sigma-V_{t_k}$. From Lemma \ref{le1} and Lemma \ref{le5} we have
    \begin{equation}\label{I1}
    \begin{aligned}
        &|\I_2|\lesssim \int_{\{\sigma_k\leq V_{t_k}\}}V_{t_k}^{p-2}\sigma_k^2\lesssim \int_{\C}V_{t_k}^{p-1-\varepsilon}\sigma_k^{1+\varepsilon}\lesssim Q^{1+\varepsilon},\\
        &|\I_4|\lesssim \int_{\{\sigma_k> V_{t_k}\}}\sigma_k^{p-2}V_{t_k}^2\lesssim \int_{\C}\sigma_k^{p-1-\varepsilon}V_{t_k}^{1+\varepsilon}\lesssim Q^{1+\varepsilon}.
    \end{aligned}
    \end{equation}
    Here $\varepsilon=\varepsilon(p)$ is a small positive number such that $\varepsilon<\frac{p-2}{3}$. As for $\I_3$, thanks to Lemma \ref{le5}, we have
    \begin{align}\label{I2}
        |\I_3|\lesssim&\; \int_{\{\sigma_k> V_{t_k}\}}\left(\sigma_k^{p-2}V_{t_k}^2+V_{t_k}^p\right)+\int_{\{\sigma_k> V_{t_k}\}}\left|\sigma_k^{p-1}-\sum_{i=1,i\neq k}^{\nu}V_{t_i}^{p-1}\right|V_{t_k}\nonumber\\
        \lesssim&\; \int_{\C}\sigma_k^{p-1-\varepsilon}V_{t_k}^{1+\varepsilon}+\sum_{\substack{1\leq i< j\leq \nu\\
        i,j\neq k}}\int_{\C}V_{t_i}^{p-2}V_{t_j}V_{t_k}\nonumber\\
        \lesssim&\; Q^{1+\varepsilon}.
    \end{align}
    In the last step of the above estimate, we use the H\"older inequality and the observation that for any $i,j\neq k$, 
    \begin{equation*}
        \max\{|t_i-t_j|,|t_i-t_k|,|t_j-t_k|\}\geq 2R.
    \end{equation*}
    It remains to consider $\I_1$. Note that
    \begin{align}\label{I3}
        \I_1=&\;(p-1)\left(\int_\C-\int_{\{\sigma_k> V_{t_k}\}}\right) V_{t_k}^{p-2}\partial_s V_{t_k}\sigma_k-\sum_{i=1,i\neq k}^{\nu}\int_{\{\sigma_k\leq V_{t_k}\}}V_{t_i}^{p-1}\partial_s V_{t_k}\nonumber\\
        \geq&\;\sum_{i\neq k}\int_\C\partial_s (V_{t_k})^{p-1}V_{t_i}-C\int_{\{\sigma_k>V_{t_k}\}}V_{t_k}^{p-1-\varepsilon}\sigma_k^{1+\varepsilon}-C\int_{\C}\sigma_k^{p-1-\varepsilon}V_{t_k}^{1+\varepsilon}\\
        \geq&\;\sum_{i\neq k}\int_\C \partial_s V_{t_k} V_{t_i}^{p-1}-CQ^{1+\varepsilon}.\nonumber
    \end{align}
    From the expression of $V_{t}(s)$ (see \eqref{sol1}), we know that 
    \begin{align}
        \int_{\C} \partial_sV_{t_i} V_{t_j}^{p-1}\geq 0\quad\text{if and only if}\;\;i\geq j.\nonumber
    \end{align}
    For any $j\geq k+2$, from Lemma \ref{le5} and the estimate $|\partial_s V_{t_k}|\lesssim V_{t_k}$, we can deduce
    \begin{align}
        -\int_\C \partial_s V_{t_k} V_{t_j}^{p-1}\lesssim Q^2.\nonumber
    \end{align}
    Based on Lemma \ref{le6} and the above arguments, we can find three positive constants $C_1,C_2,C_3$ such that
    \begin{equation*}
        \sum_{i\neq k}\int_\C \partial_s V_{t_k} V_{t_i}^{p-1}\geq C_1e^{-\sqrt{\Lambda}|t_{k-1}-t_{k}|}-C_2e^{-\sqrt{\Lambda}|t_k-t_{k+1}|}-C_3Q^2.
    \end{equation*}
    Now set
    \begin{equation*}
        k_0=\sup\{1\leq i\leq \nu\;|\;|t_i-t_{i-1}|=R\}
    \end{equation*}
    and
    \begin{equation*}
        k_1=\sup\left\{k_0\leq i< \nu\;\big|\;t_{j+1}-t_j<t_j-t_{j-1}+\Lambda^{-\frac{1}{2}}\ln (2C_2C_1^{-1})\;\text{for any }k_0\leq j\leq i\right\}.
    \end{equation*}
    If $k_0=\nu$, choose $k=k_0$. If not, choose $k=k_1$. It is not hard to check that
    \begin{equation}\label{I4}
        \sum_{i\neq k}\int_\C \partial_s V_{t_k} V_{t_i}^{p-1}\gtrsim Q
    \end{equation}
    provided $\delta$ is sufficiently small. Combining the estimates \eqref{est1}, \eqref{est2}, \eqref{I1}, \eqref{I2}, \eqref{I3} and \eqref{I4} gives \eqref{lower bd}.

    Next we need to find an upper bound of $\norm*{\rho}_{H^1(\C)}$. Following similar ideas as in \cite{Den,Wei}, we define
    \begin{equation*}
        Q_{i,j}:=e^{-\sqrt{\Lambda}|t_i-t_j|},\quad \varphi_{t_i}(s):=e^{-\sqrt{\Lambda}|s-t_i|}\quad\text{for }1\leq i,j\leq \nu,
    \end{equation*}
    \begin{align}
        W_1(s):=&\;\sum_{i=1}^{\nu-1}\left(Q_{i,i+1}\varphi_{t_i}^{p-3}(s)+Q\varphi_{t_i}^{1-\zeta}(s)\right)\chi\left(t_i+1\leq s\leq\frac{t_i+t_{i+1}}{2}\right)\nonumber\\
        &+\sum_{i=1}^{\nu-1}\left(Q_{i,i+1}\varphi_{t_{i+1}}^{p-3}(s)+Q\varphi_{t_{i+1}}^{1-\zeta}(s)\right)\chi\left(\frac{t_i+t_{i+1}}{2}\leq s\leq t_{i+1}-1\right)\nonumber\\
        &+Q\varphi_{t_\nu}^{1-\zeta}(s)\chi(t_\nu+1\leq s)+Q\varphi_{t_1}^{1-\zeta}(s)\chi(s\leq t_1-1)\nonumber\\
        &+\sum_{i=1}^{\nu}Q\chi(t_i-1\leq s\leq t_i+1),\nonumber\\
        W_2(s):=&\sum_{i=1}^\nu Q\varphi_{t_i}^{1-\zeta}(s)\chi\left(\frac{t_i+t_{i-1}}{2}\leq s\leq\frac{t_i+t_{i+1}}{2}\right),\nonumber\\
        W_3(s):=&\;\sum_{i=1}^{\nu-1}\left(Q_{i,i+1}\varphi_{t_i}^{p-3}(s)+Q\varphi_{t_i}^{1-\zeta}(s)\right)\chi\left(t_i+2\leq s\leq\frac{t_i+t_{i+1}}{2}\right)\nonumber\\
        &+\sum_{i=1}^{\nu-1}\left(Q_{i,i+1}\varphi_{t_{i+1}}^{p-3}(s)+Q\varphi_{t_{i+1}}^{1-\zeta}(s)\right)\chi\left(\frac{t_i+t_{i+1}}{2}\leq s\leq t_{i+1}-2\right)\nonumber\\
        &+Q\varphi_{t_\nu}^{1-\zeta}(s)\chi(t_\nu+2\leq s)+Q\varphi_{t_1}^{1-\zeta}(s)\chi(s\leq t_1-2)\nonumber\\
        &+\sum_{i=1}^{\nu}Q\chi(t_i-2\leq s\leq t_i+2).\nonumber
    \end{align}
    Here $\zeta=\zeta(p,n,\nu)>0$ is a sufficiently small number. We also define three new norms:
    \begin{equation*}
        \norm*{h}_{i}:=\sup_{\C}|h(s,\theta)|W_i^{-1}(s),\quad 1\leq i\leq 3.
    \end{equation*}
    Using the expression (see \eqref{sol1}) of Talenti bubbles, we can compute that
    \begin{equation}\label{fin1}
        \norm*{\sigma^{p-1}-\sum_{i=1}^\nu V_{t_i}^{p-1}}_1\lesssim 1
    \end{equation}
    when $2<p<4$ and 
    \begin{equation}\label{fin2}
        \norm*{\sigma^{p-1}-\sum_{i=1}^\nu V_{t_i}^{p-1}}_2\lesssim 1
    \end{equation}
    when $p\geq 4$. Specifically, we have
    \begin{align}
        \left|\sigma^{p-1}-\sum_{i=1}^\nu V_{t_i}^{p-1}\right|\lesssim \left|V_{t_{i+1}}^{p-2}V_{t_i}+V_{t_{i-1}}^{p-2}V_{t_i}\right|\lesssim Q_{i,i+1}\varphi_{t_i}^{p-3}(s)+Q\varphi_{t_i}^{1-\zeta}(s)\nonumber
    \end{align}
    when $t_i+1\leq s\leq\frac{t_i+t_{i+1}}{2}$ and $2<p<4$. The remaining cases can be similarly argued.
    
    By slightly modifying the blow-up arguments used in \cite[Lemma 5.4]{Wei} (see also \cite[Lemma 3.5]{Den}), we can obtain the following a priori estimate: assume $\phi$ solves the equation
    \begin{align}
        -\partial_s^2 \phi-\Delta_\theta \phi+\Lambda \phi-p\sigma^{p-2}\phi =h\nonumber
    \end{align}
    and 
    \begin{align}\label{mm0}
        \left\langle \phi,\partial_s V_{t_i}\right\rangle_{H^1(\C)}=\left\langle \phi,V_{t_i}^\frac{p}{2}\theta_j\right\rangle_{H^1(\C)}=0\quad\text{for any }1\leq i\leq \nu,\;1\leq j\leq n,
    \end{align}
    then 
    \begin{align}\label{mm1}
        \norm*{\phi}_3\chi(p<4)+\norm*{\phi}_2\chi(p\geq 4)\lesssim \norm*{h}_1\chi(p<4)+\norm*{h}_2\chi(p\geq 4).
    \end{align}
    From the standard Fredholm alternative, for any $h$ such that the right hand side of \eqref{mm1} is finite, we can find a function $\phi$ and numbers $d_i,d_{ij}$ such that
    \begin{align}\label{mm2}
        -\partial_s^2 \phi-\Delta_\theta \phi+\Lambda \phi-p\sigma^{p-2}\phi =h+\sum_{i=1}^\nu d_iV_{t_i}^{p-2}\partial_sV_{t_i}+\sum_{i=1}^\nu\sum_{j=1}^n d_{ij}V_{t_i}^{p-2}V_{t_i}^{\frac{p}{2}}\theta_j
    \end{align}
    and $\phi$ satisfies \eqref{mm0}. Note that \eqref{mm1} yields
    \begin{align}\label{mm3}
        \norm*{\phi}_3\chi(p<4)+\norm*{\phi}_2\chi(p\geq 4)\lesssim \norm*{h}_1\chi(p<4)+\norm*{h}_2\chi(p\geq 4)+Q^{-1}\max\{|d_i|,|d_{ij}|\}.
    \end{align}
    By testing \eqref{mm2} with $\partial_s V_{t_i}$ and employing Lemma \ref{le1}, Lemma \ref{le5}, we deduce that
    \begin{align}\label{mm4}
        |d_i|\lesssim&\; o_\delta(1)\sum_{j\neq i}|d_j|+\norm*{h}_1\chi(p<4)\int_{\C}W_1V_{t_i}+\norm*{h}_2\chi(p\geq 4)\int_{\C}W_2V_{t_i}+\left|\int_{\C}\sigma^{p-2}\phi\partial_s V_{t_i}\right|\nonumber\\
        \lesssim&\;o_\delta(1)\sum_{j\neq i}|d_j|+Q\norm*{h}_1\chi(p<4)+Q\norm*{h}_2\chi(p\geq 4)+\left|\int_{\C}(\sigma^{p-2}-V_{t_i}^{p-2})\phi\partial_s V_{t_i}\right|\nonumber\\
        \lesssim&\;o_\delta(1)\sum_{j\neq i}|d_j|+Q\norm*{h}_1\chi(p<4)+Q\norm*{h}_2\chi(p\geq 4)\nonumber\\
        &+\norm*{\phi}_3\chi(p<4)\int_{\C}(\sigma^{p-2}-V_{t_i}^{p-2})W_3V_{t_i}+\norm*{\phi}_2\chi(p\geq 4)\int_{\C}(\sigma^{p-2}-V_{t_i}^{p-2})W_2V_{t_i}\nonumber\\
        \lesssim&\;o_\delta(1)\sum_{j\neq i}|d_j|+Q\norm*{h}_1\chi(p<4)+Q\norm*{h}_2\chi(p\geq 4)\nonumber\\
        &+o_\delta(1)Q\left(\norm*{\phi}_3\chi(p<4)+\norm*{\phi}_2\chi(p\geq 4)\right).
    \end{align}
    Similarly, by testing \eqref{mm2} with $V_{t_i}^\frac{p}{2}\theta_j$, we can derive
    \begin{align}\label{mm5}
        |d_{ij}|\lesssim&\;o_\delta(1)\sum_{(k,l)\neq (i,j)}|d_{kl}|+Q\norm*{h}_1\chi(p<4)+Q\norm*{h}_2\chi(p\geq 4)\nonumber\\
        &+o_\delta(1)Q\left(\norm*{\phi}_3\chi(p<4)+\norm*{\phi}_2\chi(p\geq 4)\right).
    \end{align}
    From the estimates \eqref{mm3}, \eqref{mm4} and \eqref{mm5} we know that \eqref{mm1} still holds for the equation \eqref{mm2}, and we have
    \begin{align}\label{mm6}
        \sum_{i=1}^\nu|d_i|+\sum_{i=1}^\nu\sum_{j=1}^n|d_{ij}|\lesssim Q\norm*{h}_1\chi(p<4)+Q\norm*{h}_2\chi(p\geq 4).
    \end{align}
    
    Thanks to the estimates \eqref{fin1}, \eqref{fin2}, \eqref{mm1} and \eqref{mm6}, when $\delta$ is small enough, we can apply a standard fix-point argument as in \cite[Proposition 3.8]{Den} to derive that there exist a function $\rho_0\in H^1(\C)$ and two sequences of numbers $\{c_i\}_i^\nu,\{c_{ij}\}^{\nu,n}_{i,j}$ such that
        \begin{align}\label{eq3}
            -\partial_s^2 \rho_0-\Delta_\theta \rho_0+\Lambda \rho_0-p\sigma^{p-2}\rho_0 =&\;\sigma^{p-1}-\sum_{i=1}^\nu V_{t_i}^{p-1}\nonumber\\
            &+|\sigma+\rho_0|^{p-2}(\sigma+\rho_0)-\sigma^{p-1}-p\sigma^{p-2}\rho_0\\
            &+\sum_{i=1}^\nu c_iV_{t_i}^{p-2}\partial_sV_{t_i}+\sum_{i=1}^\nu\sum_{j=1}^n c_{ij}V_{t_i}^{p-2}V_{t_i}^{\frac{p}{2}}\theta_j\nonumber
        \end{align}
        and
    \begin{align}\label{fin3}
        \left\langle \rho_0,\partial_s V_{t_i}\right\rangle_{H^1(\C)}=\left\langle \rho_0, V^{\frac{p}{2}}_{t_i}\theta_j\right\rangle_{H^1(\C)}=\left\langle \rho_0,V_{t_i}^{p-2}\partial_s V_{t_i}\right\rangle_{L^2(\C)}=\left\langle \rho_0,V_{t_i}^{p-2}V^{\frac{p}{2}}_{t_i}\theta_j\right\rangle_{L^2(\C)}=0
    \end{align}
    for any $1\leq i\leq \nu,1\leq j\leq n$. Moreover, we have
    \begin{equation}\label{fin4}
        \norm*{\rho_0}_3\chi(p<4)+\norm*{\rho_0}_2\chi(p\geq 4)\lesssim 1.
    \end{equation}
    Let us test \eqref{eq3} with $\partial_sV_{t_i}$, $V_{t_i}^{\frac{p}{2}}\theta_j$ and $\rho_0$. Exploiting Lemma \ref{le1}, Lemma \ref{le5}, the estimates \eqref{fin1}, \eqref{fin2},\eqref{fin3}, \eqref{fin4} and the fact $|\partial_s V_{t_i}|\lesssim V_{t_i}$ yields
        \begin{align}
            |c_i|\int_\C V_{t_i}^{p-2}(\partial_sV_{t_i})^2+o_\delta(1)\sum_{j\neq i}c_j\lesssim \chi(p< 4)\int_\C (W_1+W_3)V_{t_i}+\chi(p\geq 4)\int_\C W_2V_{t_i}
            \lesssim Q,\nonumber
        \end{align}
        \begin{align}
            |c_{ij}|\int_\C V_{t_i}^{p-2}(V_{t_i}^{\frac{p}{2}}\theta_j)^2+o_\delta(1)\sum_{k\neq i}c_{kj}\lesssim\chi(p< 4)\int_\C W_3V_{t_i}+\chi(p\geq 4)\int_\C W_2V_{t_i}
            \lesssim Q\nonumber
        \end{align}
    and 
    \begin{align}
         \norm*{\rho_0}_{H^1(\C)}^2 \lesssim&\;\int_\C \sigma^{p-2}\rho_0^2+\chi(p<4)\int_\C W_1W_3+\chi(p\geq 4)\int_C W_2^2+\norm*{\rho_0}_{H^1(\C)}^{\min\{3,p\}}\nonumber\\
            \lesssim&\;\sum_{i=1}^\nu\int_\C V_{t_i}^{p-2}\left(\chi(p<4)W^2_3+\chi(p\geq 4)W^2_2\right)+F_1^2(Q)+\norm*{\rho_0}_{H^1(\C)}^{\min\{3,p\}}\nonumber\\
            \lesssim&\;F_1^2(Q)+\norm*{\rho_0}_{H^1(\C)}^{\min\{3,p\}}.\nonumber
    \end{align}
    Therefore, when $\delta$ is sufficiently small, we have
    \begin{align}\label{fin5}
        \sum_{i=1}^\nu |c_i|+\sum_{i=1}^\nu\sum_{j=1}^n |c_{ij}|\lesssim Q,\quad \norm*{\rho_0}_{H^1(\C)}\lesssim F_1(Q).
    \end{align}

    Define $\rho_1:=\rho-\rho_0\in H^1(\C)$. It remains to estimate $\norm*{\rho_1}_{H^1(\C)}$. Note that $\rho_1$ satisfies the equation
    \begin{equation}\label{eq4}
        \begin{aligned}
            -\partial_s^2 \rho_1-\Delta_\theta \rho_1+\Lambda \rho_1=&\;|\sigma+\rho_0+\rho_1|^{p-2}(\sigma+\rho_0+\rho_1)-|\sigma+\rho_0|^{p-2}(\sigma+\rho_0)\\
            &-\sum_{i=1}^\nu c_iV_{t_i}^{p-2}\partial_sV_{t_i}-\sum_{i=1}^\nu\sum_{j=1}^n c_{ij}V_{t_i}^{p-2}V_{t_i}^{\frac{p}{2}}\theta_j-H_1(v).\\
        \end{aligned}
    \end{equation}
    Consider
    \begin{align}
        \rho_1=\sum_{i=1}^\nu \beta_i V_{t_i}+\sum_{i=1}^\nu \hat{\beta}_i \partial_sV_{t_i}+\sum_{i=1}^\nu\sum_{j=1}^n \beta_{ij}V_{t_i}^{\frac{p}{2}}\theta_j+\rho_2,\nonumber
    \end{align}
    where $\rho_2$ satisfies
    \begin{align}\label{bu1}
        \left\langle\rho_2, V_{t_i}\right\rangle_{H^1(\C)}= \left\langle \rho_2,\partial_s V_{t_i}\right\rangle_{H^1(\C)}=\left\langle \rho_2,V_{t_i}^{\frac{p}{2}}\theta_j\right\rangle_{H^1(\C)}=0,\quad\forall 1\leq i\leq \nu,1\leq j\leq n.
    \end{align}
    Since $\rho_1$ is orthogonal to $\partial_sV_{t_i}$ for any $1\leq i\leq \nu$, we have
    \begin{equation}\label{bu2}
        \sum_{i=1}^\nu |\hat{\beta}_i|\lesssim Q\sum_{i=1}^\nu |\beta_i|.
    \end{equation}
    We also have
    \begin{align}\label{bu3}
        \Pi_{Y}(v)=\sum_{i=1}^\nu\sum_{j=1}^n \beta_{ij}V_{t_i}^{\frac{p}{2}}\theta_j.
    \end{align}
    By testing \eqref{eq4} with $V_{t_i}$, using Lemma \ref{le1}, Lemma \ref{le5} and recalling the estimates \eqref{fin4}, \eqref{fin5}, \eqref{bu1}, \eqref{bu2}, we can deduce
        \begin{align}\label{fin7}
            &\;(p-2)|\beta_i|\norm*{V_{t_i}}_{H^1(\C)}^2+o_\delta(1)\sum_{j\neq i}\beta_j\nonumber\\
            \leq&\; (p-1)\left|\int_\C \left(|\sigma+\rho_0|^{p-2}-V_{t_i}^{p-2}\right)\rho_1V_{t_i}\right|+C\norm*{\rho_1}_{H^1(\C)}^{\min\{2,p-1\}}+CQ^2+C\norm*{H_1(v)}_{H^{-1}(\C)}\nonumber\\
            \lesssim&\;\chi(p\geq 3)\sum_{j\neq i}\int_\C \left(V_{t_j}^{p-2}V_{t_i}+V_{t_j}V_{t_i}^{p-2}\right)|\rho_1|+\chi(p\geq 3)\int_\C \left(|\rho_0|^{p-2}V_{t_i}+|\rho_0|V_{t_i}^{p-2}\right)|\rho_1|\nonumber\\
            &+\chi(p<3)\sum_{j\neq i}\int_\C V_{t_j}^{\frac{p-1}{2}}V_{t_i}^{\frac{p-1}{2}}|\rho_1|+\chi(p<3)\int_\C |\rho_0|^{\frac{p-1}{2}}V_{t_i}^{\frac{p-1}{2}}|\rho_1|\nonumber\\
            &+\sum_{j=1}^\nu |\beta_j|^{\min\{2,p-1\}}+\sum_{j=1}^\nu\sum_{k=1}^n |\beta_{jk}|^{\min\{2,p-1\}}+\norm*{\rho_2}_{H^1(\C)}^{\min\{2,p-1\}}+\norm*{H_1(v)}_{H^{-1}(\C)}+Q^2\nonumber\\
            \lesssim&\;o_{\delta}(1)\sum_{j=1}^\nu|\beta_j|+o_\delta(1)\norm*{\rho_2}_{H^1(\C)}+Q^2+\norm*{H_1(v)}_{H^{-1}(\C)}\\
            &+\sum_{j=1}^\nu\sum_{k=1}^n |\beta_{jk}|^{\min\{2,p-1\}}+F_3(Q)\sum_{j=1}^\nu\sum_{k=1}^n |\beta_{jk}|\nonumber,
        \end{align}
    where
    \begin{align}
        F_3(x):=\begin{cases}
            x,&\text{if }p>3,\\
            x^{\frac{p-1}{2}}(-\ln{x})^{\frac{p-1}{p}},&\text{if }p\leq 3.
        \end{cases}\nonumber
    \end{align}
    Note that \eqref{fin7} is equivalent to 
    \begin{align}\label{we1}
        |\beta_i|\lesssim o_\delta(1)\norm*{\rho_2}_{H^1(\C)}+Q^2+\norm*{H_1(v)}_{H^{-1}(\C)}+\sum_{j=1}^\nu\sum_{k=1}^n |\beta_{jk}|^{\min\{2,p-1\}}+F_3(Q)\sum_{j=1}^\nu\sum_{k=1}^n |\beta_{jk}|.
    \end{align}
    Similarly, by testing \eqref{eq4} with $\rho_2$, we can obtain
    \begin{align}\label{fin8}
        &\norm*{\rho_2}_{H^1(\C)}^2-(p-1)\int_\C \sigma^{p-2}\rho_2^2\nonumber\\
        \leq&\;(p-1)\left|\int_\C |\sigma+\rho_0|^{p-2}\rho_1\rho_2-\sigma^{p-2}\rho_2^2\right|+C\left(\norm*{\rho_1}_{H^1(\C)}^{\min\{2,p-1\}}+\norm*{H_1(v)}_{H^{-1}(\C)}\right)\norm*{\rho_2}_{H^1(\C)}\nonumber\\
        \lesssim&\;\left|\int_\C \left(|\sigma+\rho_0|^{p-2}-\sigma^{p-2}\right)\rho_2^2\right|+\left(\sum_{j=1}^\nu|\beta_j|+\sum_{j=1}^\nu\sum_{k=1}^n |\beta_{jk}|\right)\int_\C\left||\sigma+\rho_0|^{p-2}-\sigma^{p-2}\right|\cdot\sigma|\rho_2|\nonumber\\
        &+\left(\norm*{\rho_1}_{H^1(\C)}^{\min\{2,p-1\}}+\norm*{H_1(v)}_{H^{-1}(\C)}\right)\norm*{\rho_2}_{H^1(\C)}\nonumber\\
        \lesssim&\;o_\delta(1)\norm*{\rho_2}_{H^1(\C)}^2+F_3(Q)\left(\sum_{j=1}^\nu|\beta_j|+\sum_{j=1}^\nu\sum_{k=1}^n |\beta_{jk}|\right) \norm*{\rho_2}_{H^1(\C)}\\
        &+\left(\norm*{\rho_1}_{H^1(\C)}^{\min\{2,p-1\}}+\norm*{H_1(v)}_{H^{-1}(\C)}\right)\norm*{\rho_2}_{H^1(\C)}\nonumber
    \end{align}
    It follows from Lemma \ref{le2} and a standard localization argument (see for example \cite[Proposition 3.10]{Fig}) that
    \begin{equation}\label{fin9}
        \norm*{\rho_2}_{H^1(\C)}^2\geq (\gamma_3-o_{\delta}(1))\int_\C \sigma^{p-2}\rho_2^2.
    \end{equation}
    Combining \eqref{fin8} and \eqref{fin9} gives
    \begin{align}\label{we2}
        \norm*{\rho_2}_{H^1(\C)}\lesssim o_{\delta}(1)\sum_{j=1}^\nu|\beta_j|+\norm*{H_1(v)}_{H^{-1}(\C)}+\sum_{j=1}^\nu\sum_{k=1}^n |\beta_{jk}|^{\min\{2,p-1\}}+F_3(Q)\sum_{j=1}^\nu\sum_{k=1}^n |\beta_{jk}|.
    \end{align}
    From \eqref{we1} and \eqref{we2} we get
    \begin{align}\label{we3}
        \sum_{i=1}^\nu|\beta_i|+\norm*{\rho_2}_{H^1(\C)}\lesssim Q^2+\norm*{H_1(v)}_{H^{-1}(\C)}+\sum_{j=1}^\nu\sum_{k=1}^n |\beta_{jk}|^{\min\{2,p-1\}}+F_3(Q)\sum_{j=1}^\nu\sum_{k=1}^n |\beta_{jk}|
    \end{align}
    Substituting \eqref{fin5}, \eqref{bu2} and \eqref{we3} into \eqref{lower bd} yields
    \begin{align}\label{we4}
        \norm*{H_1(v)}_{H^{-1}(\C)}+\sum_{j=1}^\nu\sum_{k=1}^n |\beta_{jk}|^2+\max_{1\leq k\leq\nu}\left|\int_\C\sigma^{p-2}\rho\;\partial_s V_{t_k}\right|\gtrsim Q.
    \end{align}
    Note that
    \begin{align}\label{we5}
        \left|\int_\C\sigma^{p-2}\rho\;\partial_s V_{t_k}\right|\lesssim&\; \left|\int_\C\sigma^{p-2}\rho_0\;\partial_s V_{t_k}\right|+\left|\int_\C\sigma^{p-2}\rho_1\;\partial_s V_{t_k}\right|\nonumber\\
        \lesssim&\;\left|\int_\C\left(\sigma^{p-2}-V_{t_k}^{p-2}\right)\rho_0\;\partial_s V_{t_k}\right|+\sum_{i=1}^\nu|\beta_i|\left|\int_\C\sigma^{p-2}V_{t_i}\;\partial_s V_{t_k}\right|\nonumber\\
        &+\sum_{i=1}^\nu|\hat{\beta}_i|\left|\int_\C\sigma^{p-2}\partial_sV_{t_i}\;\partial_s V_{t_k}\right|+  \left|\int_\C\sigma^{p-2}\rho_2\;\partial_s V_{t_k}\right|\nonumber\\
        \lesssim&\;o_\delta(1)Q+|\beta_k|\left|\int_\C\left(\sigma^{p-2}-V_{t_k}^{p-2}\right)V_{t_k}\;\partial_s V_{t_k}\right|\nonumber\\
        &+\left|\int_\C\left(\sigma^{p-2}-V_{t_k}^{p-2}\right)\rho_2\;\partial_s V_{t_k}\right|\nonumber\\
        \lesssim&\;o_\delta(1)Q+F_3(Q)\norm*{\rho_2}_{H^1(\C)}.
    \end{align}
    From \eqref{we3}, \eqref{we4} and \eqref{we5} we derive
    \begin{align}\label{we6}
        \norm*{H_1(v)}_{H^{-1}(\C)}+\sum_{j=1}^\nu\sum_{k=1}^n |\beta_{jk}|^2+F_3(Q)\sum_{j=1}^\nu\sum_{k=1}^n |\beta_{jk}|^{\min\{2,p-1\}}\gtrsim Q.
    \end{align}
    
    Now we define
    \begin{align}
        \psi_1:=\Pi_{Y}(v)=\sum_{i=1}^\nu\sum_{j=1}^n\beta_{ij}V_{t_i}^{\frac{p}{2}}\theta_j\nonumber
    \end{align}
    and
    \begin{align}
        \psi_2:=\Pi_{Y}^\perp(v)-\sigma=\rho_0+\sum_{i=1}^\nu \beta_i V_{t_i}+\sum_{i=1}^\nu \hat{\beta}_i \partial_sV_{t_i}+\rho_2.\nonumber
    \end{align}
    Based on the above arguments, we have
    the following two estimates:
    \begin{align}\label{hhh1}
        \norm*{\psi_2}_{H^1(\C)}\lesssim F_1(Q)+\norm*{H_1(v)}_{H^{-1}(\C)}+F_3(Q)\norm*{\psi_1}_{H^1(\C)}+\norm*{\psi_1}^{\min\{2,p-1\}}_{H^1(\C)}
    \end{align}
    and
    \begin{align}\label{hhh2}
        Q\lesssim\norm*{H_1(v)}_{H^{-1}(\C)}+F_3(Q)\norm*{\psi_1}^{\min\{2,p-1\}}_{H^1(\C)}+\norm*{\psi_1}^2_{H^1(\C)}.
    \end{align}
    If
    \begin{align}
        F_3(Q)\norm*{\psi_1}^{\min\{2,p-1\}}_{H^1(\C)}+\norm*{\psi_1}^2_{H^1(\C)}\leq \norm*{H_1(v)}_{H^{-1}(\C)},\nonumber
    \end{align}
    from \eqref{hhh2} we get
    \begin{align}\label{hhh3}
        Q\lesssim\norm*{H_1(v)}_{H^{-1}(\C)},\quad \norm*{\psi_1}_{H^{1}(\C)}\leq \norm*{H_1(v)}_{H^{-1}(\C)}^{\frac{1}{2}}.
    \end{align}
    Combining \eqref{hhh1} and \eqref{hhh3} yields
    \begin{align}
        \norm*{\psi_2}_{H^{1}(\C)}\lesssim F_1\left(\norm*{H_1(v)}_{H^{-1}(\C)}\right),\nonumber
    \end{align}
    which indicates \eqref{main-re6}. If
    \begin{align}
        F_3(Q)\norm*{\psi_1}^{\min\{2,p-1\}}_{H^1(\C)}+\norm*{\psi_1}^2_{H^1(\C)}\geq \norm*{H_1(v)}_{H^{-1}(\C)},\nonumber
    \end{align}
    from \eqref{hhh2} we obtain
    \begin{align}\label{hhh4}
        Q\lesssim \norm*{\psi_1}_{H^{1}(\C)}^2,\quad \norm*{H_1(v)}_{H^{-1}(\C)}\lesssim \norm*{\psi_1}_{H^{1}(\C)}^2.
    \end{align}
    Combining \eqref{hhh1} and \eqref{hhh4} gives
    \begin{align}
        \norm*{\psi_2}_{H^{1}(\C)}\lesssim F_1\left(\norm*{\psi_1}^2_{H^{1}(\C)}\right),\nonumber
    \end{align}
    which contradicts our assumptions. The proof is complete.
\end{proof}
\begin{proof}[Proof of Theorem \ref{thm2}]
    Thanks to the global compactness principle, there exists a number $\varepsilon=\varepsilon(a,n,\nu)$ such that, when
    \begin{align}\label{wer}
        \norm*{H(u)}_{D_a^{-1,2}(\R^n)}\leq \varepsilon,
    \end{align}
    we have
    \begin{align}
        \inf_{\substack{s_i\in\R_+\\
    1\leq i\leq\nu}}\norm*{u-\sum_{i=1}^\nu U_{s_i}}_{D_a^{1,2}(\R^n)}\leq \delta\nonumber
    \end{align}
    and the infimum can be attained by $U_{\lambda_i},1\leq i\leq \nu$ with
    \begin{align}
        \min\left\{\frac{\lambda_i}{\lambda_j},\frac{\lambda_j}{\lambda_i}\right\}\leq \delta,\quad \forall\;i\neq j.\nonumber
    \end{align}
    Thus when \eqref{wer} holds, \eqref{main-re3} comes from \eqref{main-re6}. If \eqref{wer} does not hold, \eqref{main-re3} follows from the fact that the left side of it has an obvious upper bound.
\end{proof}
\vspace{10pt}
\noindent{\Large\textbf{Declarations}}
~\\
~\\
\textbf{Conflict of interest}\quad On behalf of all authors, the corresponding author states that there is no Conflict of interest.~\\
{\textbf{Data Availability Statements}}\quad All data generated or analyzed during this study are included in this article.

\end{document}